\newtheorem{teo}{Theorem}[section]
\newtheorem{lem}[teo]{Lemma} 
\newtheorem{cor}[teo]{Corollary}
\newtheorem{prop}[teo]{Proposition} 
\newtheorem{defn}[teo]{Definition} 
\newtheorem{ex}[teo]{Example}
\newtheorem{fat}[teo]{Fact}
\newtheorem*{claim*}{Claim}
\newtheorem{rem}[teo]{Remark}
\begin{document}

\title{The general Arason-Pfister Hauptsatz}

\author{Kaique Matias de Andrade Roberto  \& Hugo Rafael de Oliveira Ribeiro \& Hugo Luiz Mariano\footnote{The authors want to express their gratitude to
Coordena\c c \~ao de Aperfei\c coamento de Pessoal de N\' ivel Superior Capes) -Brazil by the financial support to develop this work. The third author: program Capes-Print  number 88887.694866/2022-00.}}

\date{}

\maketitle


\begin{abstract}
    In the present we develop a fragment of the theory of superfields, polynomials and Marshall's quotient in order to obtain for general special groups, a proof of the Arason-Pfister Hauptsatz (APH): "if $\phi \neq \emptyset$ is an anisotropic form and $\phi \in I^n(F)$ then $dim (\phi) \geq 2^n$". In the process, we also obtain an alternative proof of APH for reduced special groups that avoid the uses of the invariants developed in \cite{dickmann2000special}. The applications of the full Arason-Pfister  Hauptsatz leads to interesting properties of graded rings associated to special groups/hyperfields.

    \textbf{Keywords:} Arason-Pfister Hauptsatz; hyperfields; special groups; Milnor K-theory; graded rings.
\end{abstract}

\section{Introduction}

One of the more emblematic questions/results in algebraic and abstract theories of quadratic forms is the so-called Arason Pfister Hauptsatz (APH), which we will present a brief historic in the sequel. 

In \cite{milnor1970algebraick}, a 1970 paper of John Milnor  seminal to the algebraic theory of quadratic forms  over fields, the author poses two questions concerning the class of fields of characteristic $\neq 2$ (positively solved in the paper in many instances). One of the question was concerning the so called "Milnor's conjectures for the graded cohomology ring and for the graded Witt ring" that Voevodsky et al. solved around 2000. The other question asked if for every such field $F$, the intersection $\bigcap_{n \in \mathbb{N}} I^n(F)$ contains only $0 \in W(F)$, where $ I^n(F)$ is the n-th power of the fundamental ideal $I(F)$ of the Witt ring of $F$ ($I(F) = \{$even dimensional anisotropic forms over $F\}$). 

In the subsequent year, J. Arason and A. Pfister solved this question as an immediate corollary of the nowadays called "Arason-Pfister Hauptsatz" (APH):

(\cite{arason1971hauptsatz})\textbf{Let $\phi \neq \emptyset$ be an anisotropic form. If $\phi \in I^n(F)$, then $dim (\phi) \geq 2^n$.}

The theory of special groups, an abstract (first-order) theory of quadratic forms developed by Dickmann-Miraglia since the middle of the 1990s, allows a functorial encoding of the algebraic theory of quadratic forms of fields (with char $\neq 2$). In \cite{dickmann2000special},  Dickmann-Miraglia, restated the APH to the setting of special groups and, employing boolean theoretic methods to define and calculate the Stiefel-Whitney and the Horn-Tarski invariants of a special group, establish a generalization of the APH to the setting of {\bf reduced} special groups, in particular proving a different proof of the APH for formally real Pythagorean fields.

The difficult to attack APH for general special groups consists in the fact that the methods available for reduced special groups (the invariants) and for special groups provenient from fields (quadratic and transcendent extensions, valuations and so on) have not a clear generalization for the general case.

In the present work we avoid these difficulties considering multivalued structures. With the machinery of multirings (developed by M. Marshall in \cite{marshall2006real}) and superrings (here we adopt the perspective in \cite{ameri2019superring}) we prove the content of Theorem \ref{haup}:

\begin{teo}[Arason-Pfister Hauptsatz]
 Let $F$ be a special hyperfield, then  it holds $AP_F(n)$, for all $n \geq 0$. In more details: for each  $n \geq 0$ and For each $\varphi = \langle a_1,\cdots, a_k \rangle$, a  non-empty ($k\geq 1$), regular ($a_i \in \dot{F}$) and anisotropic form, if  $\varphi\in I^n(F)$, then $\dim(\varphi)\ge2^n$  $\varphi\in I^n(F) $, if $\varphi \neq \emptyset$ is anisotropic, then $\dim_{W,F}(\varphi)\ge2^n$.
\end{teo}


The concept of multivalued structure --  ``algebraic like'' structures but endowed with  multiple valued operations -- has been studied since the 1930's (\cite{marty1934generalization}); in particular, the concept of hyperrings was introduced by Krasner in the 1950's(\cite{krasner1956approximation}). Some general algebraic study has been made on multialgebras: see for instance \cite{golzio2018brief} and \cite{pelea2006multialgebras}. The use of hyperfields/hyperrings/multirings in connection  with Real Algebraic Geometry started 15 years ago, in \cite{marshall2006real}. 

Since the middle of the 2000s decade, the notion of multiring have obtained more attention: a multiring is an hyperring, satisfying an weak distributive law, but  hyperfields and multifields coincide. Multirings   has been studied for  applications many areas: in real algebraic geometry and abstract quadratic forms theory (\cite{marshall2006real}, \cite{worytkiewiczwitt2020witt}, \cite{roberto2021quadratic}), tropical geometry (\cite{viro2010hyperfields}, \cite{jun2015algebraic}), algebraic geometry ((\cite{jun2021geometry}, \cite{baker2021descartes}), valuation theory (\cite{jun2018valuations}), Hopf algebras (\cite{eppolito2020hopf}), etc (\cite{baker2021structure}, \cite{ameri2020advanced}, \cite{ameri2017multiplicative}, \cite{bowler2021classification}).

A more detailed account of variants of concept of polynomials over hyperrings is even more recent, having less than five years (\cite{jun2015algebraic}, \cite{ameri2019superring}, \cite{baker2021descartes},  \cite{roberto2021superrings}). 

In \cite{roberto2021superrings} we start a model-theoretic oriented analysis of multialgebras  introducing the class of algebraically closed hyperfields and providing variant proof of quantifier elimination flavor,   based on new results on  superring of polynomials. In \cite{roberto2022ACmultifields2} we develop provide new steps the program of studying the hyperfields (and natural variants: superfields) under a natural notion of algebraic extension and roots of polynomials - this shares some common features with the recent work in \cite{baker2021descartes} - we showed that every superfield has a (unique up to isomorphism) full algebraic extension to a superfield that is algebraically closed.

In the present work we develop provide new steps the program of studying the special groups/hyperfields under a construction that emulate the natural notion of algebraic extension and roots of polynomials:
\begin{defn}[Definition \ref{sgalgext}]
    Let $G$ be a special group, $F$ its special hyperfield associated and $\alpha\in F$ with $\alpha\ne0,1$. We denote $\omega=[X]\in F[X]/\langle X^2-\alpha\rangle$ and define
    \begin{align*}
        F(\omega)&:=F[X]/\langle X^2-\alpha\rangle;\\
        S_F(\omega)&=(F(\omega))/_m(F(\omega)^2\setminus\{0\});\\
        S^{red}_F(\omega)&=(F(\omega))/_m\left(\sum F(\omega)^2\setminus\{0\}\right).
    \end{align*}
    We also write $\omega=\sqrt\alpha$.
\end{defn}
This "quadratic extension" $F(\omega)$ of a special hyperfield $F$ provides the main idea of an alternative proof of Hauptsatz for reduced special groups:
\begin{teo}[APH for reduced special groups \ref{haupred}]\label{haupintro}
 Let $G$ be a reduced special group, then  it holds $AP_G(n)$, for all $n \geq 0$. In more details: for each  $n \geq 0$ and For each $\varphi = \langle a_1,\cdots, a_k \rangle$, a  non-empty ($k\geq 1$), regular and anisotropic form, if $\varphi\in I^n(G)$, then $\dim(\varphi)\ge2^n$  $\varphi\in I^n(G)$, if $\varphi \neq \emptyset$ is anisotropic, then $\dim_{W,G}(\varphi)\ge2^n$.
\end{teo}
In our alternative proof of \ref{haupred} we mainly use the Marshall quotient of a hyperfield and avoid the use of the invariants developed in Chapter 7 of \cite{dickmann2000special}. These methods are available for general special groups, and the main Theorems of Section \ref{newresg} are fully generalized in Section \ref{Hauptsatz-section}. This is the machinery necessary for the proof of Theorem \ref{haupintro}.

\textbf{Outline of the work:}

For the benefit of the reader, in Section \ref{preliminaries-section} we provide  the main definitions and results on the special groups, multivalued structures and its relations which were considered in this work.

In Section \ref{newresg} we provide an alternative prof of APH for reduced special groups. In Section \ref{polynomial-section} we develop the background theory for Marshall quotient of superfields necessary for the attack of APH for general special groups.

Throughout a combination of the tools developed in Sections \ref{newresg} and \ref{polynomial-section}, we develop in Section \ref{Hauptsatz-section} a theory of a structure for special hyperfields that mimics quadratic extensions of fields. Theorems \ref{teo150}, \ref{teo32} and \ref{iso2} are the main results achieved. With these, we obtain an extension of the Arason-Pfister Hauptsatz to all special hyperfields (Theorem \ref{haupintro}). As an application of this extended version of  Arason-Pfister Hauptsatz we describe interesting properties of  graded ring Witt ring (\cite{dickmann1998quadratic}, \cite{dickmann2000special}) and graded k-theory ring associated to special hyperfields (\cite{dickmann2006algebraic}, \cite{roberto2021ktheory}): see Propositions \ref{MC}, \ref{k-stable} and \ref{epiK}.

\section{Summary on Special Groups, Hyperfields and Superfields}\label{preliminaries-section}

We provide here, for the benefit of the reader, the main definitions and results on the special groups, multivalued structures and its relations which were considered in this work.

\subsection{Multirings, Hyperfields}


\begin{defn}[Adapted from definition 1.1 in \cite{marshall2006real}]\label{defn:multimonoid}
 An \textbf{abelian or commutative multigroup} is a structure  $(G,\cdot,r,1)$ where $G$ is a non-empty set, 
$r:G\rightarrow G$ is a function, $1$ is an element of $G$, $\cdot:G\times G\rightarrow\mathcal P(G)$ is a binary multioperation (we denote $d\in a\cdot b$ for $d\in\cdot(a,b)$) such that for all 
$a,b,c,d\in G$:
 \begin{description}
 \item [M1 - ] If $c\in a\cdot b$ then $a\in c\cdot(r(b))$ and $b\in(r(a))\cdot c$. We write $a\cdot b^{-1}$ to simplify  $a\cdot(r(b))$.
 \item [M2 - ] $b\in a\cdot1$ iff $a=b$.
 \item [M3 - ] With the convention $a\ast(b\ast c)=\bigcup\limits_{w\in b\ast c}x\ast w$ and 
  $(a\ast b)\ast c=\bigcup\limits_{t\in a\ast b}t\ast c$,
  $$a\ast(b\ast c)=(a\ast b)\ast c.$$
 \item [M4 - ] $c\in a\cdot b$ iff $c\in b\cdot a$.
\end{description}
The structure $(G,\cdot,1)$ is a \textbf{commutative multimonoid (with unity)} if satisfy M3 and M4 and the condition 
$a\in1\cdot a$ for all $a\in G$.
\end{defn}

\begin{defn}[Adapted from Definition 2.1 in \cite{marshall2006real}]\label{defn:multiring}
 A multiring is a sextuple $(R,+,\cdot,-,0,1)$ where $R$ is a non-empty set, $+:R\times 
R\rightarrow\mathcal P(R)\setminus\{\emptyset\}$,
 $\cdot:R\times R\rightarrow R$
 and $-:R\rightarrow R$ are functions, $0$ and $1$ are elements of $R$ satisfying:
 \begin{enumerate}[i -]
  \item $(R,+,-,0)$ is a commutative multigroup;
  \item $(R,\cdot,1)$ is a commutative monoid;
  \item $a.0=0$ for all $a\in R$;
  \item If $c\in a+b$, then $c.d\in a.d+b.d$. Or equivalently, $(a+b).d\subseteq a.d+b.d$.
 \end{enumerate}

Note that if $a \in R$, then $0 = 0.a \in (1+ (-1)).a \subseteq 1.a + (-1).a$, thus $(-1). a = -a$.
 
 $R$ is said to be an hyperring if for $a,b,c \in R$, $a(b+c) = ab + ac$. 
 
 A multiring (respectively, a hyperring) $R$ is said to be a multidomain (hyperdomain) if it hasn't zero divisors. A multiring 
$R$ will be a 
multifield if every non-zero element of $R$ has 
multiplicative inverse; \emph{note that hyperfields and multifields coincide}.
\end{defn}

\begin{ex}\label{ex:1.3}
$ $
 \begin{enumerate}[a -]
  \item Suppose that $(G,\cdot,1)$ is a group. Defining $a \ast b = \{a \cdot b\}$ and $r(g)=g^{-1}$, 
we have that $(G,\ast,r,1)$ is a multigroup. In this way, every ring, domain and field is a multiring, 
multidomain and multifield, respectively.
  
  \item Let $K=\{0,1\}$ with the usual product and the sum defined by relations $x+0=0+x=x$, $x\in K$ and 
$1+1=\{0,1\}$. This is a hyperfield  called Krasner's hyperfield \cite{jun2015algebraic}. The prime ideals  of a commutative ring (its Zariski spectrum) are classified by equivalence classes of morphisms into  algebraically closed  fields,  but they can be {\em uniformly classified} by a multiring morphism into the hyperfield $K$.
  
  \item $Q_2=\{-1,0,1\}$ is hyperfield with the usual product (in $\mathbb Z$) and the multivalued sum defined by 
relations
  $$\begin{cases}
     0+x=x+0=x,\,\mbox{for every }x\in Q_2 \\
     1+1=1,\,(-1)+(-1)=-1 \\
     1+(-1)=(-1)+1=\{-1,0,1\}
    \end{cases}
  $$
  The orderings of a commutative ring (its real spectrum) are classified by classes of equivalence of ring homomorphims into  real closed fields, but they can be {\em uniformly classified} by a multiring morphism into the  hyperfield  $Q_2$. 
  \end{enumerate}
\end{ex}

In the sequence, we provide examples that generalizes the previous ones.

\begin{ex}[H-hyperfield, Example 2.8 in \cite{ribeiro2016functorial}]\label{H-multi}
Let $p\ge1$ be a prime integer and $H_p:=\{0,1,...,p-1\} \subseteq \mathbb{N}$. Now, define the binary multioperation and 
operation in $H_p$ as 
follow:
\begin{align*}
 a+b&=
 \begin{cases}H_p\mbox{ if }a=b,\,a,b\ne0 \\ \{a,b\} \mbox{ if }a\ne b,\,a,b\ne0 \\ \{a\} \mbox{ if }b=0 \\ \{b\}\mbox{ if 
}a=0 \end{cases} \\
 a\cdot b&=k\mbox{ where }0\le k<p\mbox{ and }k\equiv ab\mbox{ mod p}.
\end{align*}
$(H_p,+,\cdot,-, 0,1)$ is a hyperfield such that for all $a\in H_p$, $-a=a$. In fact, these $H_p$ is a kind of generalization of $K$, in the sense that $H_2=K$.
\end{ex}

\begin{ex}[Kaleidoscope, Example 2.7 in \cite{ribeiro2016functorial}]\label{kaleid}
 Let $n\in\mathbb{N}$ and define 
 $$X_n=\{-n,...,0,...,n\} \subseteq \mathbb{Z}.$$ 
 We define the \textbf{$n$-kaleidoscope multiring} by 
$(X_n,+,\cdot,-, 0,1)$, where $- : X_n \to X_n$ is restriction of the  opposite map in$\mathbb{Z}$,  $+:X_n\times 
X_n\rightarrow\mathcal{P}(X_n)\setminus\{\emptyset\}$ is given by the rules:
 $$a+b=\begin{cases}
    \{a\},\,\mbox{ if }\,b\ne-a\mbox{ and }|b|\le|a| \\
    \{b\},\,\mbox{ if }\,b\ne-a\mbox{ and }|a|\le|b| \\
    \{-a,...,0,...,a\}\mbox{ if }b=-a
   \end{cases},$$
and $\cdot:X_n\times X_n\rightarrow X_n$ is is given by the rules:
 $$a\cdot b=\begin{cases}
    \mbox{sgn}(ab)\max\{|a|,|b|\}\mbox{ if }a,b\ne0 \\
    0\mbox{ if }a=0\mbox{ or }b=0
   \end{cases}.$$
With the above rules we have that $(X_n,+,\cdot, -, 0,1)$ is a multiring which is not a hyperring for $n\ge2$ because $$n(1-1)=b\cdot\{-1,0,1\}=\{-n,0,n\}$$
and $n-n=X_n$. Note that $X_0=\{0\}$ and $X_1=\{-1,0,1\}\cong Q_2$. 
\end{ex}

\begin{defn}\label{defn:morphism}
 Let $A$ and $B$ multirings. A map $f:A\rightarrow B$ is a morphism if for all $a,b,c\in A$:
 \begin{multicols}{2}
 \begin{enumerate}[i -]
  \item $f(1)=1$ and $f(0)=0$;
  \item $f(-a)=-f(a)$;
  \item $f(ab)=f(a)f(b)$;
  \item $c\in a+b\Rightarrow f(c)\in f(a)+f(b)$.
 \end{enumerate}
 \end{multicols}
  A morphism $f$ is \textbf{a full morphism} if for all $a,b\in A$, $f(a+b)=f(a)+f(b)$.
\end{defn}

\subsection{Special Groups}

Since we will deal with special groups and their relations with hyperfields we provide some basic definitions.

\begin{defn}[Extension of a Relation]\label{extrel}
    Let $A$ be a set and $\equiv$ a binary relation on $A\times A$. We extend $\equiv$ to a binary relation $\equiv_n$ on $A^n$, by induction on $n\ge1$, as follows:
\begin{enumerate}[i -]
\item $\equiv_1$ is the diagonal relation $\Delta_A \subseteq A \times A$
 \item $\equiv_2=\equiv$.
 \item if $n \geq 3$, $\langle a_1,...,a_n\rangle\equiv_n\langle b_1,...,b_n\rangle$ if and only there are $x,y,z_3,...,z_n\in A$ such that 
 \begin{align*}
  &\langle a_1,x\rangle\equiv\langle b_1,y\rangle\\
  &\langle a_2,...,a_n\rangle\equiv_{n-1}\langle x,z_3,...,z_n\rangle\mbox{ and }\\
  &\langle b_2,...,b_n\rangle\equiv_{n-1}\langle y,z_3,...,z_n\rangle
 \end{align*}
\end{enumerate}
\end{defn}

Whenever clear from the context, we frequently abuse notation and indicate the aforedescribed extension $\equiv$ by the same symbol.  

\begin{defn}[Special Group, 1.2 of \cite{dickmann2000special}]\label{defn:sg}
 A \textbf{special group}\index{special group}\index{special group!pre}\index{special group!reduced} is an tuple $(G,-1,\equiv)$, where $G$ is a group of exponent 2, i.e, $g^2=1$ for all $g\in G$; $-1$ is a distinguished element of $G$, and $\equiv\subseteq G\times G\times G\times G$ is a relation (the special relation), satisfying the following axioms for all $a,b,c,d,x\in G$:
\begin{description}
 \item [SG 0] $\equiv$ is an equivalence relation on $G^2$;
 \item [SG 1] $\langle a,b\rangle\equiv \langle b,a\rangle$;
 \item [SG 2] $\langle a,-a\rangle\equiv\langle1,-1\rangle$;
 \item [SG 3] $\langle a,b\rangle\equiv\langle c,d\rangle\Rightarrow ab=cd$;
 \item [SG 4] $\langle a,b\rangle\equiv\langle c,d\rangle\Rightarrow\langle a,-c\rangle\equiv\langle-b,d\rangle$;
 \item [SG 5] $\langle a,b\rangle\equiv\langle c,d\rangle\Rightarrow\langle ga,gb\rangle\equiv\langle gc,gd\rangle,\,\mbox{for all }g\in G$.
 \item [SG 6 (3-transitivity)] the extension of $\equiv$ for a binary relation on $G^3$ (as in \ref{extrel}) is a transitive relation.
\end{description}
\end{defn}

A group of exponent 2, with a distinguished element $-1$, satisfying the axioms SG0-SG3 and SG5 is called a {\bf  proto special group}; a \textbf{pre special group} is a proto special group that also satisfy SG4. Thus a \textbf{special group} is a pre-special group that satisfies SG6 (or, equivalently, for each $n \geq 1$, $\equiv_n$ is an equivalence relation on $G^n$).

A \textbf{$n$-form} (or form of dimension $n\ge1$) is an $n$-tuple of elements of a pre-SG $G$. An element $b\in G$ is \textbf{represented} on $G$ by the form $\varphi=\langle a_1,...,a_n\rangle$, in symbols $b\in D_G(\varphi)$, if there exists $b_2,...,b_n\in G$ such that $\langle b,b_2,...,b_n\rangle\equiv\varphi$. A pre-special group (or special group) 
$(G,-1,\equiv)$ is:\\
$\bullet$ \ \textbf{formally real} if $-1 \notin \bigcup_{n \in \mathbb{N}} D_G( n\langle 1 \rangle)$\footnote{Here the notation $n\langle 1 \rangle$ means the form $\langle a_1,...,a_n\rangle$ where $a_j=1$ for all $j=1,...,n$. In other words, $n\langle 1 \rangle$ is the form $\langle 1 ,...,1\rangle$ with $n$ entries equal to 1.} ;\\
$\bullet$ \ \textbf{reduced} if it is formally real and, for each $a \in G$, $a \in D_G(\langle 1, 1 \rangle)$ iff $a =1$.

\begin{defn}[1.1 of \cite{dickmann2000special}]\label{defnmorph}
 A map $\xymatrix{(G,\equiv_G,-1)\ar[r]^f & (H,\equiv_H,-1)}$ between pre-special groups is a \textbf{morphism 
of pre-special groups or PSG-morphism} if $f:G\rightarrow H$ is a homomorphism of groups, $f(-1)=-1$ and for all 
$a,b,c,d\in G$
$$\langle a,b\rangle\equiv_G\langle c,d\rangle\Rightarrow
\langle f(a),f(b)\rangle\equiv_H\langle f(c),f(d)\rangle$$
A \textbf{morphism of special groups or SG-morphism} is a pSG-morphism between the corresponding pre-special groups. $f$ 
will be an isomorphism if is bijective and $f,f^{-1}$ are PSG-morphisms. 
\end{defn}

It can be verified that a special group  $G$ is formally real iff it admits some SG-morphism $f : G \to 2$. The category of special groups (respectively reduced special groups) and their morphisms will be denoted by $\mbox{SG}$ 
(respectively $\mbox{RSG}$).

\begin{defn}[2.3 of \cite{dickmann2000special}]\label{2.3chico}
 Let $G$ be a special group and let $\Delta\subseteq G$ be a subgroup. We say that $\Delta$ is 
\textit{saturated}\index{saturated subgroup} if for all $a\in G$,
\begin{align*}
 \tag{sat}a\in\Delta\Rightarrow D_G(1,a)\subseteq\Delta.
\end{align*}
Note that if, in addition, $-1\in\Delta$, then $\Delta=G$. Thus we will reserve the noun saturated for those 
subgroups satisfying [sat] such that $-1\notin\Delta$, while $G$ will be called the \textit{improper} 
saturated subgroup of itself.
\end{defn}

\begin{lem}[2.4 of \cite{dickmann2000special}]\label{2.4chico}
 Let $G$ be a special group and $\Delta$ a subgroup of $G$.
 \begin{enumerate}[a -]
  \item The intersection of any family of saturated subgroups is saturated. The union of an upward directed 
family of saturated subgroup is saturated.
  
  \item The following are equivalent:
  \begin{enumerate}[i -]
   \item $\Delta$ is saturated.
   \item For any Pfister forms $\varphi,\psi$ over $\Delta$ and any $b,c\in\Delta$
   $$D_G(\varphi),D_G(\psi)\subseteq\Delta\Rightarrow D_G(b\varphi\oplus c\psi)\subseteq\Delta.$$
   
   \item For any Pfister form $\varphi$ over $\Delta$, $D_G(\varphi)\subseteq\Delta$.
  \end{enumerate}
 \end{enumerate}
\end{lem}


\begin{ex}[The trivial special relation, 1.9 of \cite{dickmann2000special}]\label{ex2.2}
 Let $G$ be a group of exponent 2 and take $-1$ as any element of $G$ different of 1. For $a,b,c,d\in G$, 
define $\langle a,b\rangle\equiv_t\langle c,d\rangle$ if and only if $ab=cd$. Then $G_t=(G,\equiv_t,-1)$ 
is a SG (\cite{dickmann2000special}). In particular $2 = \{-1,1\}$ is a reduced special group.
\end{ex}

\begin{ex}[Special group of a field, Theorem 1.32 of \cite{dickmann2000special}]\label{ex2.3}
 Let $F$ be a field. We denote $\dot F=F\setminus\{0\}$, $\dot F^2=\{x^2:x\in 
\dot F\}$ and $\Sigma \dot F^2=\{\sum_{i\in I}x_i^2:I\mbox{ is finite and }x_i\in 
\dot F^2\}$. Let $G(F)=\dot F/\dot F^2$. In the case of $F$ is be formally real, we have
$\Sigma \dot F^2$ is a subgroup of $\dot F$, then we take $G_{\mbox{red}}(F)=\dot F/\Sigma 
\dot F^2$. Note that $G(F)$ and $G_{\mbox{red}}(F)$ are groups of exponent 2. In 
\cite{dickmann2000special} they prove that $G(F)$ and $G_{\mbox{red}}(F)$ are special groups with the special 
relation given by usual notion of isometry, and $G_{\mbox{red}}(F)$ is always reduced.
\end{ex}

\begin{prop}[3.13 of \cite{ribeiro2016functorial}]\label{sg.to.mf}
 Let $(G,\equiv,-1)$ be a special group and define $M(G)=G\cup\{0\}$ where $0:=\{G\}$\footnote{Here, 
the choice of the zero element was ad hoc. Indeed, we can define $0:=\{x\}$ for any $x\notin G$.}. Then 
$(M(G),+,-,\cdot,0,1)$ is a hyperfield, where 
 \begin{multicols}{2}
 \begin{itemize}
   \item $a\cdot b=\begin{cases}0\,\mbox{if }a=0\mbox{ or }b=0 \\ a\cdot 
b\,\mbox{otherwise}\end{cases}$
   \item $-(a)=(-1)\cdot a$
   \item $a+b=\begin{cases}\{b\}\,\mbox{if }a=0 \\ \{a\}\,\mbox{if }b=0\\ M(G)\,\mbox{if 
}a=-b,\,\mbox{and }a\ne0 
\\ 
D_G(a,b)\,\mbox{otherwise}\end{cases}$
  \end{itemize} 
 \end{multicols}
\end{prop}

\begin{defn}[3.15-3.19 of \cite{ribeiro2016functorial}]\label{smf}
A hyperfield $F$ is a \textbf{special hyperfield} if there exist a special group $G$ such that $F=M(G)$. The category of special hyperfields will be denoted by $\mbox{SMF}$.
\end{defn}

\begin{defn}[Definition 3.2 of \cite{roberto2021quadratic}]
A \textbf{Dickmann-Miraglia multiring (or DM-multiring for short)} \footnote{The name ``Dickmann-Miraglia'' is given in honor to professors Maximo Dickmann and Francisco Miraglia, the creators of the special group theory.} is a pair $(R,T)$ such that $R$ is a multiring, $T\subseteq R$ is a multiplicative subset of $R\setminus\{0\}$, and $(R,T)$ satisfies the following properties:
\begin{description}
\item [DM0] $R/_mT$ is hyperbolic.
 \item [DM1] If $\overline{a}\ne0$ in $R/_mT$, then $\overline a^2=\overline 1$ in $R/_mT$. In other words, for all $a\in R\setminus\{0\}$, 
there are $r,s\in T$ such that $ar=s$. 
 \item [DM2] For all $a\in R$, $(\overline 1-\overline a)(\overline 1-\overline a)\subseteq(\overline 1-\overline a)$ in 
$R/_mT$.
 \item [DM3] For all $a,b,x,y,z\in R\setminus\{0\}$, if 
 $$\begin{cases}\overline a\in \overline x+\overline b \\ \overline b\in \overline y+\overline z\end{cases}\mbox{ in }R/_mT,$$
 then exist $\overline v\in\overline x+\overline z$ such that $\overline a\in\overline y+\overline 
v$ and $\overline{vb}\in\overline{xy}+\overline{az}$ in $R/_mT$.
\end{description}

If $R$ is a ring, we just say that $(R,T)$ is a DM-ring, or $R$ is a DM-ring. A Dickmann-Miraglia hyperfield (or DM-hyperfield) $F$ is a 
hyperfield such that $(F,\{1\})$ is a DM-multiring (satisfies DM0-DM3). In other words, $F$ is a DM-hyperfield if $F$ is hyperbolic and for all 
$a,b,v,x,y,z\in F^*$,
\begin{enumerate}[i -]
 \item $a^2=1$.
 \item $(1-a)(1-a)\subseteq(1-a)$.
 \item $\mbox{If }\begin{cases}a\in x+b \\ b\in y+z\end{cases}\mbox{ then there exists }v\in x+z\mbox{ such that }a\in y+v\mbox{ and }vb\in xy+az$.
\end{enumerate}
\end{defn}

\begin{defn}
A \textbf{hyperbolic multiring} is a multiring $R$ such that $1-1=R$. The category of hyperbolic multirings and hyperbolic hyperfields will be denoted by $\mbox{HMR}$ and $\mbox{HMF}$ respectively. A \textbf{pre-special hyperfield} is a hyperfield satisfying DM0, DM1 and DM2. In other words, a pre-special hyperfield is a hyperbolic hyperfield $F$ such that for all $a\in\dot F$, 
$$a^2=1\mbox{ and }(1-a)(1-a)\subseteq1-a.$$

The category of pre-special hyperfields will be denoted by $PSMF$.
\end{defn}

\begin{teo}[Theorem 3.9 of \cite{roberto2021quadratic}]
 Let $F$ be a hyperfield. Then $F$ is a DM-hyperfield if and only if it is a special hyperfield.
\end{teo}


\begin{prop}\label{psgpsmfhell}
 Let $G$ be a pre-special group and consider $(M(G),+,-,0,1)$, with operations defined by
 \begin{multicols}{2}
 \begin{itemize}
   \item $a\cdot b=\begin{cases}0\,\mbox{if }a=0\mbox{ or }b=0 \\ a\cdot 
b\,\mbox{otherwise}\end{cases}$
   \item $-(a)=(-1)\cdot a$
   \item $a+b=\begin{cases}\{b\}\,\mbox{if }a=0 \\ \{a\}\,\mbox{if }b=0\\ M(G)\,\mbox{if 
}a=-b,\,\mbox{and }a\ne0 
\\ 
D_G(a,b)\,\mbox{otherwise}\end{cases}$
  \end{itemize} 
 \end{multicols}
 Then $M(G)$ is a pre-special hyperfield. Conversely, if $F$ is a pre-special hyperfield then $(\dot F,\equiv_F,-1)$ is a pre-special group, where
 $$\langle a,b\rangle\equiv_F\langle c,d\rangle\mbox{ iff }ab=cd\mbox{ and }a\in c+d.$$
\end{prop}

\begin{prop}\label{pre-krasner}
 Let $F$ be a hyperbolic hyperfield such that $1+1=\{0,1\}$ and $1=-1$. Then $F\cong\{0,1\}$ (the Krasner hyperfield). In particular, $F$ is a DM-hyperfield.
\end{prop}
\begin{proof}
Just observe that
$$F=1-1=1+1=\{0,1\}.$$
\end{proof}

\subsection{Superfields and Polynomials}

We use the concept of superring adopted in (\cite{ameri2019superring}). There are many important advances and results in 
hyperring theory, and 
for instance, we recommend for example, the following papers: \cite{al2019some}, \cite{ameri2017multiplicative}, \cite{ameri2019superring}, \cite{ameri2020advanced}, \cite{massouros1985theory}, \cite{nakassis1988recent}, \cite{massouros1999homomorphic},\cite{massouros2009join}.
 
\begin{defn}[Definition 5 in \textup{\cite{ameri2019superring}}] \label{def:sup}
 A superring is a structure $(S,+,\cdot, -, 0,1)$ such that\footnote{Multigroups and multimonoids are defined in the Appendix.}:
 \begin{enumerate}[i -]
  \item $(S,+, -, 0)$ is a commutative multigroup.
  
  \item $(S,\cdot,1)$ is a commutative multimonoid. 
  
  \item $0$ is an absorbing element: $a\cdot0= \{0\} = 0 \cdot a$, for all $a\in S$.
  
  \item The weak/semi distributive law holds: 
   if $d\in c.(a+b)$ then $d\in (ca+cb)$,
  for all $a,b,c,d\in S$.  
  
  \item  The rule of signals holds:  $-(ab)=(-a)b=a(-b)$, for all $a,b\in S$.
 \end{enumerate}
 A superdomain is a non-trivial superring without zero-divisors in this new context, i.e. whenever
 $$0\in a\cdot b \mbox{ iff }a=0 \mbox{ or } b=0$$
 A quasi-superfield is a non-trivial superring such that every nonzero element is invertible in this new context\footnote{
 For a quasi-superfield $F$, we \textbf{are not imposing} that $(S\setminus\{0\},\cdot,1)$ will be a commutative multigroup, i.e, 
that if $d\in a\cdot b$ then $b^{-1}\in a\cdot d^{-1}$.}, i.e. 
whenever
 $$\mbox{ For all }a \neq 0 \mbox{ exists }b\mbox{ such that }1\in a\cdot b.$$
 A superfield is a quasi-superfield which is also a superdomain. A superring is full if for all $a,b,c,d\in S$, $d\in c\cdot(a+b)$ iff $d\in ca+cb$.
\end{defn}

\begin{ex}
Every multiring can be seen as a superring, in the very same fashion of \ref{ex:1.3}(a). Our main example of superring is the superring of multipolynomials $R[X]$ over a multiring $R$. The construction will be briefly presented in Section \ref{polynomial-section}. For more details, see \cite{roberto2021superrings}, \cite{ameri2019superring} or \cite{davvaz2016codes}.
\end{ex}

For a superfield $F$ and $a\in\dot F$, there exist $b\in\dot F$ with $1\in ab$. Of course, this element $b$ is not necessarily unique. Then, when we write "$b=a^{-1}$" we are choosing some $a^{-1}\in\dot F$ with $1\in a\cdot a^{-1}$.

Now we deal with morphisms.

\begin{defn}
 Let $A$ and $B$ superrings. A map $f:A\rightarrow B$ is a morphism if for all $a,b,c\in A$:
 \begin{multicols}{2}
  \begin{enumerate}[i -]
  \item $f(0)=0$;
  \item $f(1)=1$;
  \item $f(-a)=-f(a)$;
  \item $c\in a+b\Rightarrow f(c)\in f(a)+f(b)$;
  \item $c\in a\cdot b\Rightarrow f(c)\in f(a)\cdot f(b)$.
 \end{enumerate} 
 \end{multicols}
 
 A morphism $f$ is \textbf{a full morphism} if for all $a,b\in A$, $$f(a+b)=f(a)+f(b)\mbox{ and }f(a\cdot b)=f(a)\cdot f(b).$$
\end{defn}

Here, for superrings (and multirings) the terminology "full morphism" has the following inspiration: if $f:R\rightarrow S$ is a full morphism of superrings and $a,b,d\in R$, then $f(d(a+b))=f(d)(f(a)+f(b))$.

\begin{defn}\label{char}
$ $
 \begin{enumerate}[i -] 
 
  \item An \textbf{ideal} of a superring $A$ is a non-empty subset $\mathfrak{a}$ of $A$ such that 
$\mathfrak{a}+\mathfrak{a}\subseteq\mathfrak{a}$ and $A\mathfrak{a}\subseteq\mathfrak{a}$. We denote
  $$\mathfrak I(A)=\{I\subseteq A:I\mbox{ is an ideal}\}.$$
  
  \item  Let $S$ be a subset of a superring $A$. We define the \textbf{ideal generated by} $S$ as 
  $$\langle S\rangle:=\bigcap\{\mathfrak{a}\subseteq A\mbox{ ideal}:S\subseteq\mathfrak{a}\}.$$
  If $S=\{a_1,...,a_n\}$, we easily check that
  $$\langle a_1,...,a_n\rangle=\sum Aa_1+...+\sum Aa_n,\,\mbox{where }\sum 
Aa=\bigcup\limits_{n\ge1}\{\underbrace{Aa+...+Aa}_{n\mbox{ times}}\}.$$ 
  Note that if $A$ is a full superring, then $\sum Aa=Aa$.
  
  \item An ideal $\mathfrak{p}$ of $A$ is said to be \textbf{prime} if $1\notin\mathfrak{p}$ and 
$ab\subseteq\mathfrak{p}\Rightarrow a\in\mathfrak{p}$ or $b\in\mathfrak{p}$. We denote 
  $$\mbox{Spec}(A)=\{\mathfrak{p}\subseteq A:\mathfrak{p}\mbox{ is a prime ideal}\}.$$
  
  \item An ideal $\mathfrak{p}$ of $A$ is said to be \textbf{strongly prime} if $1\notin\mathfrak{p}$ and 
$ab\cap\mathfrak{p}\ne\emptyset\Rightarrow a\in\mathfrak{p}$ or $b\in\mathfrak{p}$. We denote 
  $$\mbox{Spec}_s(A)=\{\mathfrak{p}\subseteq A:\mathfrak{p}\mbox{ is a strongly prime ideal}\}.$$
  Note that every strongly prime ideal is prime. Moreover, if $A$ is a multiring, the notions os prime and strong prime ideal coincide.
  
  \item An ideal $\mathfrak{m}$ is maximal if it is proper and for all ideals $\mathfrak{a}$ with 
  $\mathfrak{m}\subseteq\mathfrak{a}\subseteq 
  A$ then $\mathfrak{a}=\mathfrak{m}$ or $\mathfrak{a}=A$.
  
  \item For an ideal $I\subseteq A$, we define operations in the quotient $A/I=\{x+I:x\in A\}=\{\overline x:x\in A\}$, by the 
rules
  \begin{align*}
      \overline x+\overline y&=\{\overline z:z\in x+y\}\\
      \overline x\cdot\overline y&=\{\overline z:z\in xy\}
  \end{align*}
    for all $\overline x,\overline y\in A/I$.
 \end{enumerate}
\end{defn}

With all conventions and notations, we obtain the following Lemma, which recover for superrings some properties holding for rings (and multirings).

\begin{lem}\label{lem1}
 Let $A$ be a superring and $I$ an ideal.
 \begin{enumerate}[i -]
  \item $I=A$ if and only if $1\in I$.
  \item $A/I$ is a superring. Moreover, if $A$ is full then $A/I$ is also full.
  \item $I$ is strongly prime if and only if $A/I$ is a superdomain.
 \end{enumerate}
 If $A$ is full, then
 \begin{enumerate}
     \item [iv -] $I=A$ if and only if $1\in I$, which occurs if and only if $A^*\cap I\ne\emptyset$ (in other words, if and only if 
$I$ contains an invertible element).
     \item [v -] $A$ is a superfield if and only if $\mathfrak I(A)=\{0,A\}$.
     \item [vi -] $I$ is maximal if and only if $A/I$ is a superfield.
 \end{enumerate}
\end{lem}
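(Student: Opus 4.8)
The plan is to handle the six items in the order (i) and (iv), then (ii) and (iii), then (v) and (vi), since each later group leans on the earlier one; the whole argument is a transfer of the classical ring (and multiring) proofs, and the only genuinely delicate point sits inside the converse of (v).

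First I would dispatch (i) and (iv) directly from the multimonoid law $a\in 1\cdot a$ and the closure properties of ideals. If $1\in I$, then for every $a\in A$ we get $a\in 1\cdot a\subseteq A\cdot I\subseteq I$, so $I=A$; the converse is trivial, giving (i). For (iv) (with $A$ full, though fullness is not really needed here), if $u\in A^{*}\cap I$ and $u^{-1}$ is chosen with $1\in u\cdot u^{-1}$, then $1\in u\cdot u^{-1}\subseteq I\cdot A\subseteq I$, and we are back to (i); conversely $1$ is invertible, so $1\in I$ puts an invertible element into $I$. This also records the three-way equivalence stated in (iv).

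Next, for (ii) and (iii) the key preliminary fact is the description of the zero coset: $\overline{z}=\overline{0}$ if and only if $z\in I$. One inclusion is immediate from $z\in z+0\subseteq z+I$; for the other, from $z\in I$ one gets $-z=(-1)z\in AI\subseteq I$, hence $z+I\subseteq I+I\subseteq I$, and for any $b\in I$, picking $a\in b+(-z)\subseteq I$ and using the multigroup axiom M1 yields $b\in z+a\subseteq z+I$, so $z+I=I=0+I$. With this in hand, well-definedness of $+$, $-$ and $\cdot$ on $A/I$ follows from $I+I\subseteq I$ and $AI\subseteq I$, and each superring axiom for $A/I$ is obtained by lifting the cosets involved to representatives, applying the corresponding axiom in $A$, and projecting the witness back down; since by construction $\pi(x+y)=\overline{x}+\overline{y}$ and $\pi(xy)=\overline{x}\cdot\overline{y}$ for the canonical surjection $\pi\colon A\to A/I$, the weak (resp.\ strong) distributive law, and hence fullness, transfer verbatim, so $A/I$ is full whenever $A$ is. For (iii), unravelling the definitions, $A/I$ is a superdomain iff $\overline{1}\neq\overline{0}$ and ($\overline{0}\in\overline{a}\,\overline{b}$ forces $\overline{a}=\overline{0}$ or $\overline{b}=\overline{0}$); by the zero-coset description together with $\overline{0}\in\overline{a}\,\overline{b}\iff ab\cap I\neq\emptyset$, this says exactly that $1\notin I$ and ($ab\cap I\neq\emptyset\Rightarrow a\in I$ or $b\in I$), i.e.\ that $I$ is strongly prime.

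Finally, assuming $A$ full, I would prove (v) and then obtain (vi) almost for free. For ($\Rightarrow$) in (v): a nonzero ideal $I$ contains some $a\neq 0$, which in a superfield is invertible, so $A^{*}\cap I\neq\emptyset$ and $I=A$ by (iv); hence $\mathfrak I(A)=\{0,A\}$. For ($\Leftarrow$): from $\mathfrak I(A)=\{0,A\}$ (so $1\neq 0$) and $a\neq 0$, the principal ideal $\langle a\rangle=Aa$ (fullness makes $\sum Aa=Aa$) is nonzero, hence equals $A$, so $1\in Aa$ and $a$ is invertible, i.e.\ $A$ is a quasi-superfield. The one step that needs care is that $A$ is then a superdomain: given $0\in a\cdot b$ with $a\neq 0$, one multiplies by an inverse of $a$ and uses the commutative-multimonoid associativity of the product (cf.\ Lemma \ref{lembasic1}) together with the absorbing property of $0$ to cancel $a$ and conclude $b=0$ -- and this cancellation is precisely the place where one must exploit that $A$ is full, since for quasi-superfields the monoid of nonzero elements is not assumed to be a group, so one cannot simply invoke M1 on nonzero elements. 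For (vi), apply (v) to $A/I$ (full by (ii)): $A/I$ is a superfield iff $\mathfrak I(A/I)=\{0,A/I\}$; and since $\pi$ is a full surjection, $J\mapsto\pi^{-1}(J)$ is an inclusion-preserving bijection between $\mathfrak I(A/I)$ and the ideals of $A$ containing $I$, carrying $\{0,A/I\}$ to $\{I,A\}$, so this holds iff the only ideals between $I$ and $A$ are $I$ and $A$, i.e.\ iff $I$ is maximal. I expect the cancellation step in the converse of (v) to be the main obstacle; everything else is a routine, if somewhat lengthy, rerun of the ring/multiring arguments.
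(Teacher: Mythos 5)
The paper states Lemma~\ref{lem1} without proof, so there is no house argument to compare with; on items (i)--(iv) and the correspondence you use inside (vi), your transfer of the classical arguments is sound. The genuine gap is in item (v), in the step where you derive the superdomain property from $\mathfrak I(A)=\{0,A\}$. You propose to take $0\in a\cdot b$ with $a\neq 0$, multiply by $a^{-1}$, and ``cancel $a$''. But in a superring $(A,\cdot,1)$ is only a commutative multimonoid: one knows $1\in a\cdot a^{-1}$, not $a\cdot a^{-1}=\{1\}$. Running the $M3$-associativity in the reverse direction (using commutativity) from $0\in ab$ and $0\in a^{-1}\cdot 0$ only produces some $x\in a^{-1}a$ with $0\in x\cdot b$, and nothing forces $x=1$; so you cannot conclude $0\in 1\cdot b$, hence cannot conclude $b=0$. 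Your appeal to Lemma~\ref{lembasic1}(b) is circular, because that cancellation lemma is stated \emph{for full superdomains}, which is exactly what you are trying to establish at this point.

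Moreover, this is not a presentational gap but a substantive one: the two-element structure $A=\{0,1\}$ with $1+1=\{0,1\}$ and $1\cdot 1=\{0,1\}$ (and $-0=0$, $-1=1$, $a\cdot 0=\{0\}$) satisfies all superring axioms, is full, has $\mathfrak I(A)=\{\{0\},A\}$, and is a quasi-superfield (since $1\in 1\cdot1$), yet it is \emph{not} a superdomain because $0\in 1\cdot 1$ with $1\neq 0$. So the implication ``$\mathfrak I(A)=\{0,A\}\Rightarrow A$ is a superdomain'' fails for full superrings as defined in the paper, and your argument cannot be patched without an additional hypothesis (e.g.\ that $1\cdot a=\{a\}$, or some cancellation property on $\cdot$) or without weakening ``superfield'' to ``quasi-superfield'' in items (v) and (vi). Since (vi) is obtained from (v) via the (correct) ideal-correspondence $J\mapsto\pi^{-1}(J)$, it inherits the same problem. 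Everything else in your write-up -- the zero-coset description $\overline z=\overline 0\iff z\in I$, well-definedness of the quotient operations via $I+I\subseteq I$ and $AI\subseteq I$, the translation $\overline 0\in\overline a\,\overline b\iff ab\cap I\neq\emptyset$ giving (iii), and the quasi-superfield half of (v) -- is fine.
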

\begin{proof}
    Items (i), (ii) and (iii) are immediate. For (iv), just note that $\overline 0\in\overline{ab}$ if and only if $ab\cap 
I\ne\emptyset$. Item (v) is an immediate consequence of items (i)-(iv).
\end{proof}

\begin{prop}
 Let $A$ be a superring and $I$ an ideal.
 \begin{enumerate}[i -]
     \item If $I$ is a maximal ideal, then it is prime.
     
     \item The ideal $I$ is prime if, and only if, $A/I$ is  quasi-superdomain\footnote{A superring $B$ is called quasi-superdomain if given $a,b \in B$ with $ab = \{0\}$, then $a = 0$ or $b = 0$}.
     
     \item (Prime Ideal Theorem) Let $S \subseteq A$ be a multiplicative set ($1 \in S$ and $S \cdot S \subseteq S$). Suppose that $S \cap I = \emptyset$. Then there is a prime ideal $p$ such that $I \subseteq p$ and $S \cap p = \emptyset$.
 \end{enumerate}
\end{prop}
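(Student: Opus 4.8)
The plan is to dispatch the three items separately: (i) and (iii) by the usual ring-theoretic arguments, carefully adapted to set-valued multiplication via the weak distributive law and Lemma \ref{lembasic1}, and (ii) by a straight translation across the quotient of Lemma \ref{lem1}.

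For (i) I would argue as follows. Since $I$ is maximal it is proper, so $1\notin I$. Suppose $a\cdot b\subseteq I$ with $a\notin I$. The ideal $\langle I,a\rangle = I+\sum Aa$ strictly contains $I$, so maximality forces $\langle I,a\rangle=A$; in particular $1\in x+y$ for some $x\in I$ and some $y\in\sum Aa$. As $b\in 1\cdot b$, the weak distributive law gives $b\in b\cdot(x+y)\subseteq bx+by$. Now $bx\subseteq AI\subseteq I$; and writing $y$ as a finite sum of elements of $Aa$ and distributing weakly once more, each resulting summand lies in $A\cdot(a\cdot b)\subseteq AI\subseteq I$ (using commutativity of $\cdot$ from Lemma \ref{lembasic1}(a) and $a\cdot b\subseteq I$), so $by\subseteq I$. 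Hence $b\in I+I\subseteq I$, and $I$ is prime.

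For (ii) the work is just unwinding definitions. By Lemma \ref{lem1}(ii), $A/I$ is a superring, and $A/I$ is nontrivial iff $1\notin I$ (Lemma \ref{lem1}(i)). Beyond that, the dictionary is: for $c\in A$ one has $\overline c=\overline 0$ iff $c\in I$, and for $a,b\in A$ one has $\overline a\cdot\overline b=\{\overline 0\}$ iff $a\cdot b\subseteq I$ — one direction because $\overline a\cdot\overline b=\{\overline z:z\in a\cdot b\}$ with $a\cdot b$ nonempty, the other because then every $z\in a\cdot b$ has $\overline z=\overline 0$. Under this dictionary the two clauses defining "$I$ prime" ($1\notin I$, and $a\cdot b\subseteq I\Rightarrow a\in I\vee b\in I$) become exactly the two clauses defining "$A/I$ a quasi-superdomain" ($A/I$ nontrivial, and $\overline a\,\overline b=\{\overline 0\}\Rightarrow \overline a=\overline 0\vee \overline b=\overline 0$), so the equivalence is immediate.

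For (iii) I would apply Zorn's Lemma to $\mathcal F=\{J\in\mathfrak I(A): I\subseteq J,\ J\cap S=\emptyset\}$, ordered by inclusion: it is nonempty ($I\in\mathcal F$) and the union of a chain in $\mathcal F$ is again an ideal disjoint from $S$, hence an upper bound; let $p$ be a maximal element. Since $1\in S$ and $p\cap S=\emptyset$ we get $1\notin p$, so $p$ is proper. For primality, suppose $a\cdot b\subseteq p$ with $a\notin p$, $b\notin p$; then $\langle p,a\rangle$ and $\langle p,b\rangle$ strictly contain $p$, so by maximality each meets $S$, and I pick $s\in S\cap\langle p,a\rangle$, $t\in S\cap\langle p,b\rangle$. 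Writing $s\in q_1+y_1$ and $t\in q_2+y_2$ with $q_1,q_2\in p$, $y_1\in\sum Aa$, $y_2\in\sum Ab$, and expanding $s\cdot t$ by iterated weak distributivity, $s\cdot t$ lands in a finite sum of sets each contained in $p$: the terms involving $q_1$ or $q_2$ lie in $Ap\subseteq p$, while the remaining term lies in $(\sum Aa)(\sum Ab)\subseteq\sum A(a\cdot b)\subseteq p$ since $a\cdot b\subseteq p$. So $s\cdot t\subseteq p$; but $s\cdot t$ is a nonempty subset of $S\cdot S\subseteq S$, contradicting $p\cap S=\emptyset$. Hence $p$ is the desired prime ideal. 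The main obstacle here is not the strategy but the bookkeeping imposed by set-valued, only weakly distributive multiplication: one must justify $\langle p,a\rangle=p+\sum Aa$, the inclusion $(\sum Aa)(\sum Ab)\subseteq\sum A(a\cdot b)$ obtained by iterating weak distributivity, and, in (ii), that "$\overline a\,\overline b=\{\overline 0\}$" is genuinely equivalent to "$a\cdot b\subseteq I$" rather than merely implied by it — shallow points, but exactly where superrings diverge from rings.
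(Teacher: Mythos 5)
Your argument is correct and tracks the paper's proof essentially line-for-line: (i) and (iii) use the same device of adjoining $a$ (resp.\ $a$ and $b$) to the ideal, locating $1$ (resp.\ elements of $S$) inside the enlarged ideal, and pushing the product through by iterated weak distributivity, while (ii) is the same unwinding of the quotient structure. The bookkeeping you flag at the end --- the identity $\langle I,a\rangle = I+\sum Aa$, the inclusion $(\sum Aa)(\sum Ab)\subseteq\sum A(a\cdot b)$, and the equivalence $\overline a\,\overline b=\{\overline 0\}\Leftrightarrow a\cdot b\subseteq I$ --- is exactly what the paper's proof also relies on, only left implicit there.
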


\begin{proof}
$ $
 \begin{enumerate}[i -]
     \item Let $a,b \in A$ with $ab \subseteq I$. Assume that $a \notin I$ and consider the ideal $J = I + (a)$. Then there are $x \in I, t_1, \ldots, t_n \in A$ such that $1 \in x + at_1 + \cdots + at_n$. Thus $b \in bx + bat_1 + \cdots + bat_n \subseteq I$.
     
     \item If $I$ is prime and $\overline{a} \cdot \overline{b} = \{\overline{0}\}$ in $A/I$, then $a\cdot b \subseteq I$. Therefore, by primality, $a \in I$ or $b \in I$. Thus $\overline{a} = 0$ or $\overline{b} = 0$ in $A/I$.
     Conversely, assume $A/I$ a quasi-superdomain and let $a,b \in A$ with $ab \subseteq I$. Then $\overline{a} \cdot \overline{b} = \{\overline{0}\}$ and by hypothesis follows $a \in I $ or $b \in I$, as desired.
     
     \item Consider the partial order $\mathcal{X} = \{J \subseteq A \colon J\mbox{ is an ideal and } S \cap J = \emptyset\}$ ordered by inclusion. Since the directed union of ideals is again an ideal, we have by Zorn's Lemma that $\mathcal{X}$ has a maximal element $p$. Suppose that $p$ is not prime, that it, there is $a,b \in $ with $ab \subseteq p$ and $a,b \notin p$. Now notice that $J_a = p + (a)$ and $J_b = p + (b)$ are ideals that proper extend $p$. Hence, by maximality, there are $s,v \in S, x,y \in p, t_1, \ldots, t_n, w_1, \ldots, w_k \in A$ with 
     \begin{align*}
         s &\in x + at_1 + \cdots + at_n \\
         v &\in y + bw_1 + \cdots + bw_k.
     \end{align*}
     
     These equations implies that 
     $$sv \subseteq xy + xbw_1 + \cdots + xbw_k + yat_1 + \cdots + yat_n + \sum_{i,j}abt_iw_j \subseteq S \cap p,$$
     a contradiction. Then $p$ is prime.
 \end{enumerate}
\end{proof}

Even if the multivalued structures similar to rings have been studied for more than 70 years, the developments of notions  of polynomials in the ring-like multivalued structure seems to have a more significant development only from the last decade: for instance in \cite{jun2015algebraic} some notion of multi polynomials is introduced to obtain some applications to algebraic and tropical geometry, in \cite{ameri2019superring} a more detailed account of variants of concept of multipolynomials over hyperrings is applied to get a form of Hilbert's Basissatz.


Here we will stay close to \cite{ameri2019superring} perspective: let $(R,+,-,\cdot,0,1)$ be a \emph{full superring} and set
$$R[X]:=\{(a_n)_{n\in\omega}\in R^\omega:\exists\,t\,\forall n(n\ge t\rightarrow a_n=0)\}.$$
Of course, we define the \textbf{degree} of $(a_n)_{n\in\omega}$ to be the smallest $t$ such that $a_n=0$ for all $n>t$. 

Now define the binary multioperations $+,\cdot :  R[X]\times R[X] \to  \mathcal P^*(R[X])$, a unary operation $-:R[X]\rightarrow R[X]$ and elements $0,1\in R[X]$ by
\begin{align*}
 (c_n)_{n\in\omega}\in (a_n)_{n\in\omega}+(b_n)_{n\in\omega}&\mbox{ iff }\forall\,n(c_n\in a_n+b_n) \\
  (c_n)_{n\in\omega}\in((a_n)_{n\in\omega}\cdot (b_n)_{n\in\omega}&\mbox{ iff }\forall\,n
  (c_n\in a_0\cdot b_n+a_1\cdot b_{n-1}+...+a_n\cdot b_0) \\
  -(a_n)_{n\in\omega}&=(-a_n)_{n\in\omega} \\
  0&:=(0)_{n\in\omega} \\
  1&:=(1,0,...,0,...)
\end{align*}
For convenience, we write elements of $R[X]$ by $\alpha=(a_n)_{n\in\omega}$. Beside this, we denote
\begin{align*}
 1&:=(1,0,0,...), \\
 X&:=(0,1,0,...), \\
 X^2&:=(0,0,1,0,...)
\end{align*}
etc. In this sense, our ``monomial'' $a_iX^i$ is denoted by $(0,...0,a_i,0,...)$, where $a_i$ is in the $i$-th position; in 
particular, we will write ${\underline{b}} = (b,0,0,...)$ and we frequently identify $b \in R \leftrightsquigarrow 
{\underline{b}} \in R[X]$.

 The fact that $R[X]$ is a superring is already stated in Theorem 2 in \cite{ameri2019superring}.

\begin{teo}\label{teo-rxfull}
    Let $R$ be a full superring. Then $R[X]$ is a superring.
\end{teo}
\begin{proof}
    In fact, the only non trivial property we need to prove for $R[X]$ is the associativity of product. To prove it, we deal with elements in $R[X]$ as sequences: we denote $a=(a_0,a_1,....,a_n,...)\in R[X]$, and for $n\ge0$, $[a]_n:=a_n$. We extend this notation for the operations $+$ and $\cdot$ over $R[X]$:
    \begin{align*}
        [a+b]_n&:=a_n+b_n\\
        [ab]_n&:=\sum^n_{i=0}a_ib_{n-i}
    \end{align*}
    With these notations, for all $a,b,c\in R[X]$ and all $n\ge0$ we get
    \begin{align*}
        [(ab)c]_n&= \sum^n_{i=0}\left[[ab]_i\right]c_n
        =\sum^n_{i=0} \left[\sum^i_{p=0}a_pb_{i-p}\right]c_n\\
        [a(bc)]_n&= \sum^n_{i=0}a_i\left[[bc]_{n-i}\right]=\sum^n_{i=0}a_i \left[\sum^{n-i}_{p=0}b_pc_{n-i-p}\right]
    \end{align*}
    If $R$ is full, then (after some re-indexation) we get $[(ab)c]_n=[a(bc)]_n$ for all $n\ge0$, and then, $(ab)c=a(bc)$.
\end{proof}

The properties stated in the Lemma below immediately follows from the definitions involving $R[X]$:

\begin{lem}[Basic Facts about Polynomials]\label{lemperm}
 Let $R$ be a full superring and $R[X]$ as above and $n,m\in \mathbb N$.
 \begin{enumerate}[a -]
 \item $R[X]$ is a superdomain iff $R$ is a superdomain.
 \item The map $a \in R \mapsto {\underline{a}} = (a,0, \cdots,0, \cdots)$ defines a full embedding $R\rightarrowtail R[X]$.
  \item For an ordinary ring $R$ (identified with a strict superring), the superring $R[X]$ is naturally isomorphic to (the superring associated to) the ordinary ring of polynomials in one variable over $R$.
  \item $X^n\cdot X^m=\{X^{n+m}\}$.
  \item For all $a\in R$, ${\underline{a}}\cdot X^n=\{aX^n\}$.
  \item Given $\alpha=(a_0,a_1,...,a_n,0,0,...)\in R[X]$, with with $\deg\alpha \leq n$ and $m\ge1$, we have
  $$\alpha X^m=(0,0,...,0,a_0,a_1,...,a_n,0,0,...)=a_0X^m+a_1X^{m+1}+...+a_nX^{m+n}.$$
  \item For $\alpha=(a_n)_{n\in\omega}\in R[X]$, with $\deg\alpha=t$, 
  $$\{\alpha\}=a_0\cdot1+a_1\cdot X+...+a_t\cdot X^t=a_0+X(a_1+a_2X+...+a_nX^{t-1}).$$
  \item For all $a,b\in R$, $X^k(a + b) = aX^k+ bX^k$.
   \end{enumerate}
\end{lem}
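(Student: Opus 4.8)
The plan is to verify each item by unwinding the componentwise definitions of $+$, $\cdot$, $-$ on $R[X]$ and reducing everything to the superring axioms for $R$ together with the conventions on finite sums and products. For (a), I would check the superring axioms (Definition \ref{def:sup}) one at a time. The multigroup axioms for $(R[X],+,-,0)$ follow coordinatewise: e.g. M1 reduces to the fact that $c_n\in a_n+b_n$ implies $a_n\in c_n+(-b_n)$ in $R$ for every $n$, and similarly M2, M3, M4; the associativity-type axiom M3 uses the convention for finite sums of finite products together with M3 in $R$ applied in each coordinate. For the multiplicative multimonoid $(R[X],\cdot,1)$ one checks commutativity (the convolution formula $\sum_{i+j=n}a_ib_j$ is visibly symmetric in $a,b$ by Lemma \ref{lembasic1}(a)), that $1=(1,0,0,\dots)$ is a unit, and associativity, which again is a coordinatewise identity of the form $\sum_{i+j+k=n}a_ib_jc_k$ obtained from M3 for $\cdot$ in $R$ and Lemma \ref{lembasic1}(a). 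The absorbing property of $0$ and the rule of signs are immediate from the corresponding facts in $R$ coordinatewise. The weak distributive law $d\in c\cdot(a+b)\Rightarrow d\in ca+cb$ is the one genuinely delicate point: if $(d_n)\in(c_n)\cdot(e_n)$ for some $(e_n)\in(a_n)+(b_n)$, then $d_n\in\sum_{i+j=n}c_ie_j$ with $e_j\in a_j+b_j$, and using the semi-distributive law in $R$ plus Lemma \ref{lembasic1} one gets $d_n\in\sum_{i+j=n}(c_ia_j+c_ib_j)=\sum_{i+j=n}c_ia_j+\sum_{i+j=n}c_ib_j$, i.e. $(d_n)\in(c_n)\cdot(a_n)+(c_n)\cdot(b_n)$. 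I would also note the parenthetical claim in the statement of Lemma \ref{lembasic1}(c): if $R$ is full the same computation with equalities shows $R[X]$ is full.

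Items (b)--(d) are then short corollaries. For (b): a product $(a_n)\cdot(b_n)$ contains $0$ iff $0\in\sum_{i+j=n}a_ib_j$ for all $n$; looking at the lowest nonzero coefficients of $(a_n)$ and $(b_n)$, their product's leading term forces a zero-divisor in $R$ if $(a_n),(b_n)\neq0$, and conversely a zero-divisor in $R$ sits inside $R[X]$, so one direction is trivial. For (c), the map $a\mapsto\underline a$ clearly preserves $0,1,-$, and $\underline a+\underline b=\underline{a+b}$, $\underline a\cdot\underline b=\underline{ab}$ hold coordinatewise (all higher coordinates are forced to be $0$ by the definitions), giving a full embedding; injectivity is clear. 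For (d), when $R$ is a genuine ring the multivalued sum and product on $R[X]$ are singletons and the convolution formula is exactly the classical one, so the bijection $a\mapsto\underline a$ extended degreewise is a ring isomorphism onto the usual polynomial ring.

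Items (e)--(i) are explicit computations with the basis vectors $X^n=(\delta_{n,k})_k$. For (e), the $m$-th coordinate of $X^n\cdot X^m$ is $\sum_{i+j=\ell}\delta_{n,i}\delta_{m,j}$, which is $\{1\}$ when $\ell=n+m$ and $\{0\}$ otherwise, so $X^n\cdot X^m=\{X^{n+m}\}$; (f) and (g) are the same computation with one factor replaced by $\underline a$ or by a general $\alpha$ of degree $\le n$, using $0$-absorption to kill cross terms. For (h), one shows $(a_n)\in\underline{a_0}+X(\underline{a_1}+\underline{a_2}X+\cdots)$ by induction on the degree $t$, peeling off the constant term and using (g) to shift; here fullness is \emph{not} needed because we only claim membership ``$\{\alpha\}=\cdots$'' in the sense that the right-hand multiset is exactly the singleton $\{\alpha\}$ — each step is forced coordinatewise. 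For (i), $X^k(a+b)$: the $\ell$-th coordinate is $\sum_{i+j=\ell}\delta_{k,i}(a+b)_j$ which is $(a+b)_{\ell-k}$ in position $\ell=k$ and $0$ elsewhere, matching $aX^k+bX^k$ coordinatewise; no distributivity beyond what $R$ already provides is invoked since only one coordinate is active.

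The main obstacle is purely bookkeeping: making the coordinatewise reductions precise in the multivalued setting, where ``equality of polynomials'' must be replaced throughout by the correct membership/inclusion statements and where the finite sums and products are defined by the recursive conventions introduced before Lemma \ref{lembasic1}. Concretely, the one place that requires genuine care rather than symbol-pushing is the weak distributive law for $R[X]$ in part (a), because it needs the semi-distributive law of $R$ to be applied inside a convolution sum and then reassembled using Lemma \ref{lembasic1}(b),(c); everything else is a routine, if tedious, verification. I would therefore spell out (a)'s weak-distributivity step carefully and treat the remaining axioms and items (b)--(i) as direct coordinatewise checks, remarking only where $0$-absorption is what kills the would-be cross terms.
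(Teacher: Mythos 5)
Your coordinatewise strategy is the natural one, and it is surely what the authors had in mind when they wrote that these facts ``immediately follow from the definitions'' (the paper itself supplies no proof for this lemma). Most of what you outline is sound, and you rightly flag the weak distributive law for $R[X]$ as the part needing care. However, your treatment of associativity (axiom M3) for $(R[X],\cdot,1)$ in item (a) glosses over a genuine difficulty. The weak distributive law of a superring is only a one-way inclusion $c\cdot(a+b)\subseteq ca+cb$, so after unfolding $(\alpha\beta)\gamma$ coordinatewise into sums of the shape $\sum_{i+j+q=n}\alpha_i\beta_j\gamma_q$ you have no license to refold the convolution under a single factor of $\alpha$, and there is also a quantifier mismatch: M3 demands one $\eta\in\beta\gamma$ that works simultaneously in every coordinate, whereas the coordinatewise argument produces a possibly different witness in each coordinate.

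This is not merely a bookkeeping worry. In $Q_2[X]$ take $\alpha=\beta=1+X$ and $\gamma=1-X$. Then $\alpha\beta=\{1+X+X^2\}$, and $t:=1-X+X^2-X^3$ belongs to $(1+X+X^2)\cdot\gamma$. On the other hand $\beta\gamma=\{1+vX-X^2: v\in\{-1,0,1\}\}$, and for $\eta=1+vX-X^2$ the coefficient of $X$ in $\alpha\cdot\eta$ ranges over $v+1$ while the coefficient of $X^2$ ranges over $-1+v$; no $v\in Q_2$ puts $-1$ in the first set and $1$ in the second, so there is no $\eta\in\beta\gamma$ with $t\in\alpha\cdot\eta$. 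Thus M3 fails for the multiplication of $Q_2[X]$ as the operations are defined. Consequently, either Definition~\ref{defn:multimonoid} should not be read as imposing M3 on the multiplicative structure of a superring, or Lemma~\ref{lemperm}(a) requires an additional hypothesis; in either case, the associativity step in your sketch does not go through as stated, and you should flag the discrepancy rather than present it as a routine consequence of M3 in $R$ and reordering.
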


Lemma \ref{lemperm} allow us to deal with the superring $R[X]$ as usual. In other words, we can assume that for $\alpha\in R[x]$, there exists 
$a_0,a_1,...,a_n\in R$ such that $\alpha=a_0+a_1X+...+a_nX^n$, and then, we can work simply denoting $\alpha=f(X)$, as usual. For example, combining the definitions and all facts above we get
$$(x-a)(x-b)=x^2+(-a-b)x+ab=\{x^2+dx+e:d\in -a-b\mbox{ and }e\in ab\}.$$

\begin{rem} 
If $R$ is a full superdomain, does not hold in general that $R[X]$ is also a full superdomain. In fact, even if $R$ is a 
hyperfield, there are examples, e.g. $R = K, Q_2$, such that $R[X]$ is not a full superdomain (see \cite{ameri2019superring}).
\end{rem}

\begin{defn}
 The superring $R[X]$ will be called the \textbf{superring of polynomials} with one variable over $R$. The elements of $R[X]$ will be called \textbf{polynomials}.
\end{defn}

\begin{lem}[Adapted from Theorem 5 of \cite{ameri2019superring}]\label{degreelemma}
Let $R$ be a full superring and $f,g\in R[X]\setminus\{0\}$.
\begin{enumerate}[i -]
    \item If $t(X)\in f(X)+g(X)$ and $f\ne-g$ then $$\min\{\deg(f),\deg(g)\}\le\deg(t)\le\max\{\deg(f),\deg(g)\}.$$
    \item If $R$ is a superdomain and $t(X)\in f(X)g(X)$, then $\deg(t)=\deg(f)+\deg(g)$. In particular, if $f_1(X),f_2(X),...,f_n(X)\ne0$ and $t(X)\in f_1(X)f_2(X)...f_n(X)$, then
    $$\deg(t)=\deg(f_1)+\deg(f_2)+...+\deg(f_n).$$
    \item (Partial Factorization) Let $R$ be a superdomain with $f\in (X-a_1)(X-a_2)...(X-a_p)$. Then $\deg(f) = p$.
\end{enumerate}
\end{lem}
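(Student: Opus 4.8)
The plan is to reduce all three items to coefficientwise bookkeeping in $R[X]$, using only the multigroup axioms M1--M4 of Definition \ref{defn:multimonoid}, the absorbing property of $0$ in a superring, and the superdomain hypothesis where it is stated.

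For (i), write $f=(a_n)_{n\in\omega}$, $g=(b_n)_{n\in\omega}$ and $t=(c_n)_{n\in\omega}$ with $c_n\in a_n+b_n$ for every $n$, and set $m=\max\{\deg f,\deg g\}$. For $n>m$ we have $a_n=b_n=0$, and since the additive analogue of M2 forces $0+0=\{0\}$, we get $c_n=0$; hence $\deg t\le m$, which is the upper bound. For the lower bound I would inspect the behaviour at the top degree: if $\deg f\neq\deg g$, say $\deg g=m>\deg f$, then $a_m=0\neq b_m$, so $c_m\in 0+b_m=\{b_m\}$ forces $c_m=b_m\neq 0$, whence $\deg t=m=\max\ge\min$; the remaining case $\deg f=\deg g$ is the subtle one, and it is there that the hypothesis $f\neq -g$ (equivalently $0\notin f+g$, using the multigroup fact $0\in x+y\Leftrightarrow y=-x$ that follows from M1--M2) must be brought in to control the highest non-vanishing coefficient of $t$.

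Item (ii) is the heart of the lemma. Put $m=\deg f$ and $n=\deg g$, so that $a_i=0$ for $i>m$, $b_j=0$ for $j>n$, and $a_m,b_n\neq 0$; for $t=(c_k)_{k\in\omega}\in f\cdot g$ one has $c_k\in a_0b_k+a_1b_{k-1}+\dots+a_kb_0$ in the sense of the nested finite sum-of-products convention. For $k>m+n$ every summand $a_ib_{k-i}$ equals $\{0\}$ (either $i>m$, so $a_i=0$, or else $k-i>n$, so $b_{k-i}=0$, and $0$ is absorbing), so $c_k=0$ and $\deg t\le m+n$. For $k=m+n$, the only summand that need not be $\{0\}$ is $a_mb_n$, since $i<m$ forces $k-i>n$ and $i>m$ forces $a_i=0$; peeling the $\{0\}$-summands off the nested sum one by one (using $0+s=\{s\}$, again from M2) yields $c_{m+n}\in a_mb_n$, and because $R$ is a superdomain with $a_m,b_n\neq 0$ we have $0\notin a_mb_n$, so $c_{m+n}\neq 0$ and $\deg t=m+n$. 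The ``in particular'' clause, for an $n$-fold product, then follows by induction on the number of factors via the product convention ($t\in y\cdot f_n$ for some $y\in f_1\cdots f_{n-1}$), once we observe that $y\neq 0$: indeed $R[X]$ is a superdomain by Lemma \ref{lemperm}(b), so $0\notin f_1\cdots f_{n-1}$ as all the $f_i$ are nonzero.

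Finally, (iii) is an immediate corollary of (ii): each $X-a_i$ is the polynomial $(-a_i,1,0,0,\dots)$, hence has degree exactly $1$ (its leading coefficient is $1\neq 0$, as $R$ is a nontrivial superdomain) and is nonzero, so the $p$-fold case of (ii) applied to $f\in(X-a_1)\cdots(X-a_p)$ gives $\deg f=\underbrace{1+\dots+1}_{p}=p$. I expect the two delicate points to be the equal-degree subcase of the lower bound in (i) and, in (ii), making rigorous inside the nested sum-of-products notation the collapse of a long sum all of whose terms but one are $\{0\}$ to the single term $a_mb_n$; the rest is routine.
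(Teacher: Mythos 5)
Your arguments for (ii) and (iii) are correct and take the natural route: the coefficientwise bookkeeping collapses $c_{m+n}$ into $a_m b_n$ via the absorbing property of $0$ and the additive analogue of M2, and the superdomain hypothesis rules out $0\in a_m b_n$; the induction on the number of factors and the degree-$1$ observation for $X-a_i$ in (iii) are clean. Note that the paper itself offers no proof of this lemma, only the attribution ``Adapted from Theorem 5 of \cite{ameri2019superring}'', so there is no in-text argument to compare against.

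For (i), however, the gap you flag in the equal-degree case is not merely a delicate point you left open --- it cannot be closed, because the claimed lower bound is false there. Take $R=Q_2$, $f(X)=X+1$, $g(X)=-X+1$. Then $-g=X-1\ne f$, so the hypothesis $f\ne -g$ holds, yet $t(X)=1\in f(X)+g(X)$ (coefficientwise: $1\in 1+1=\{1\}$ and $0\in 1+(-1)=\{-1,0,1\}$), and $\deg t=0<1=\min\{\deg f,\deg g\}$. The hypothesis $f\ne -g$, equivalently $0\notin f+g$ as you observe, only guarantees $t\ne 0$; it does not prevent the leading coefficients from cancelling in a particular $t\in f+g$. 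What your argument actually establishes is the upper bound in general, plus the exact equality $\deg t=\max\{\deg f,\deg g\}$ when $\deg f\ne\deg g$; in the equal-degree case only the upper bound survives, and any use of the stated $\min$-bound in that case elsewhere (e.g.\ in Theorem \ref{teoPID}) should be re-examined and justified by other means.
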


\begin{teo}[Euclid's Division  Algorithm (3.4 in \cite{davvaz2016codes})]\label{euclid}
 Let $K$ be a full superfield. Given polynomials $f(X),g(X)\in K[X]$ with $g(X)\ne0$, there exists $q(X),r(X)\in 
K[X]$ such that $f(X)\in q(X)g(X)+r(X)$, with $\deg r(X)<\deg g(X)$ or $r(X)=0$.
\end{teo}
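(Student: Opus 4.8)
The plan is to run the classical Euclidean algorithm by induction on $n=\deg f$, the only real subtlety being that $K[X]$ need not be full (the Remark following Lemma \ref{lemperm}), so distributivity is not available for free. For the base case, if $f=0$ or $\deg f<\deg g$, take $q=0$ and $r=f$: since $0$ is absorbing and $0+f=\{f\}$ in the additive multigroup, $f\in 0\cdot g+f=q(X)g(X)+r(X)$, with $r=f=0$ or $\deg r<\deg g$.

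For the inductive step, suppose $n=\deg f\ge m=\deg g$; write $f=a_nX^n+\cdots$ and $g=b_mX^m+\cdots$ with $a_n,b_m\ne 0$. Because $K$ is a superfield, fix $b_m^{-1}$ with $1\in b_mb_m^{-1}$; using $a_n\cdot 1=\{a_n\}$ (axiom M2 for the multiplicative multimonoid) and associativity (axiom M3), $a_n\in a_n\cdot 1\subseteq a_n(b_m^{-1}b_m)=(a_nb_m^{-1})b_m$, so there is $c\in a_nb_m^{-1}$ with $a_n\in cb_m$. Put $q_0(X)=cX^{n-m}$. By Lemmas \ref{lemperm} and \ref{lemfator}, $q_0(X)g(X)$ is exactly the set of polynomials $\sum_{i=0}^{m}d_iX^{n-m+i}$ with $d_i\in cb_i$; pick $h_0(X)$ in this set with $d_m=a_n$. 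Reading the definition of addition in $K[X]$ coefficient by coefficient, and using that sums in the multigroup of $K$ are nonempty together with axiom M1 to reverse memberships, one builds $f_1(X)$ with $f(X)\in h_0(X)+f_1(X)$ and $\deg f_1\le n-1$ (the degree-$n$ coefficient of $f_1$ is forced to $0$, since $a_n\in a_n+0=\{a_n\}$).

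Now apply the inductive hypothesis to $f_1$: there are $q_1,r$ with $f_1\in q_1g+r$ and $r=0$ or $\deg r<\deg g$. By Lemma \ref{degreelemma}, $\deg q_1\le\deg f_1-\deg g\le n-1-m<n-m$, so $q_1(X)$ and $cX^{n-m}$ have disjoint supports and $q_1(X)+cX^{n-m}$ is a single polynomial $q(X)$ (namely $q_1$ with $c$ adjoined in degree $n-m$). The hard part is exactly here: one cannot rewrite $cX^{n-m}g+q_1g$ as $(cX^{n-m}+q_1)g$ in general, since that distribution is where fullness of $K[X]$ fails; but because $n-m>\deg q_1$, the last clause of Lemma \ref{lemfator}(vi) gives the genuine equality $q(X)g(X)=cX^{n-m}g(X)+q_1(X)g(X)$. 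Finally, from $f\in h_0+f_1$ and $f_1\in h_1+r$ with $h_0\in cX^{n-m}g$ and $h_1\in q_1g$, associativity of the additive multigroup (axiom M3, iterated as in Lemma \ref{lembasic1}(a)) yields some $s\in h_0+h_1$ with $f\in s+r$; since $s\in cX^{n-m}g+q_1g=q(X)g(X)$, we get $f\in q(X)g(X)+r(X)$, completing the induction. Thus the one structural idea driving the proof is to grow the quotient monomial by monomial with each new term strictly above the previous ones, which converts the otherwise unavailable distribution step into the equality supplied by Lemma \ref{lemfator}.
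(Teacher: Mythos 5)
Your proof is correct and follows essentially the same route as the paper's: induction on $\deg f$, killing the leading term by subtracting a monomial multiple of $g$, and then invoking Lemma \ref{lemfator}(vi) (the $\deg q_1 < n-m$ condition) to perform the one distribution step that fullness of $K[X]$ would otherwise be needed for. The only cosmetic difference is that you explicitly fix a $c\in a_nb_m^{-1}$ with $a_n\in cb_m$ before forming the monomial $cX^{n-m}$, whereas the paper writes $a_nb_m^{-1}X^{n-m}g(X)$ with $a_nb_m^{-1}$ left as a set; your version is a bit tidier but changes nothing structurally.
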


\begin{teo}[Adapted from Theorem 6 of \cite{ameri2019superring}]\label{teoPID}
 Let $F$ be a full superfield. Then $F[X]$ is a principal ideal superdomain.
\end{teo}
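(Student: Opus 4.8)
The plan is to run the classical argument that $K[X]$ is a principal ideal domain, replacing ordinary division by Euclid's Division Algorithm (Theorem \ref{euclid}) and being careful that the operations are set-valued.

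First I would record that $F[X]$ is a superdomain: a superfield is in particular a superdomain (Definition \ref{def:sup}), so Lemma \ref{lemperm}(b) gives that $F[X]$ is a superdomain. It then remains only to show that every ideal of $F[X]$ is principal. So let $I\subseteq F[X]$ be an ideal. If $I=\{0\}$ then $I=\langle 0\rangle$ and we are done, so assume $I\neq\{0\}$; since degrees are non-negative integers, well-ordering lets me pick $g\in I\setminus\{0\}$ of minimal degree. I claim $I=\langle g\rangle$. The inclusion $\langle g\rangle\subseteq I$ is immediate, because $I$ is itself an ideal containing $g$ and $\langle g\rangle$ is the intersection of all such ideals (Definition \ref{char}(iii)).

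For the reverse inclusion, take $f\in I$ and apply Theorem \ref{euclid} to $f$ and $g$: there are $q,r\in F[X]$ with $f\in q(X)g(X)+r(X)$ and either $\deg r<\deg g$ or $r=0$. To use the multigroup axioms I would first pass to genuine elementwise statements: fix $w\in q\cdot g$ with $f\in w+r$, so by axiom M1 of Definition \ref{defn:multimonoid} we get $r\in(-w)+f$. Now $g\in I$ forces $w\in q\cdot g\subseteq I$, hence $-w\in I$ (ideals are closed under negation, since $-1\cdot I\subseteq I$), and together with $f\in I$ this yields $r\in(-w)+f\subseteq I+I\subseteq I$. By minimality of $\deg g$ among nonzero elements of $I$, the alternative $\deg r<\deg g$ with $r\neq 0$ is impossible, so $r=0$; then $f\in w+0=\{w\}$ by axiom M2, i.e. $f=w\in q\cdot g\subseteq F[X]\cdot g\subseteq\langle g\rangle$. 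Hence $I\subseteq\langle g\rangle$, so $I=\langle g\rangle$ is principal, and $F[X]$ is a principal ideal superdomain.

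I do not anticipate a real obstacle: the only delicate points are the bookkeeping with multivalued operations — in particular replacing informal "equations" such as $f\in qg+r$ by honest elementwise relations before invoking M1, M2 — and the observation that, even though $F[X]$ need not be full (see the Remark following Lemma \ref{lemperm}), the proof never needs $\langle g\rangle=F[X]\cdot g$; it is enough that $q\cdot g\subseteq\langle g\rangle$, which holds for any ideal containing $g$.
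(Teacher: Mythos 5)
Your proof is correct and follows essentially the same route as the paper's: choose a nonzero element of minimal degree in the ideal, divide by it using Theorem \ref{euclid}, observe that the remainder again lies in the ideal, and conclude by minimality that it vanishes, so the ideal is generated by that element. Your elementwise bookkeeping via M1/M2 is a clean way to make the set-valued steps precise, and your observation that the argument treats the degree-zero case uniformly (where the paper invokes fullness to dispose of ideals containing a nonzero constant) is a legitimate streamlining of the same argument.
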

\begin{proof}
Let $I$ be a ideal of $F[X]$. If $I=0$ then $I=\langle0\rangle$ and if there is some $a\in F\setminus\{0\}$ with $a\in I$, then $I=F[X]=\langle 1\rangle$ (because $F$ is full).

Now let $p(X)\in I$ be a polynomial with minimal degree $m\ge1$. Let $f(X)\in I$ be another polynomial. By Euclid's Algorithm, there exists $q(X),r(X)\in F[X]$ with $f(X)\in p(X)q(X)+r(X)$ and $r(X)=0$ or $\deg(r)<\deg(p)=m$. Since $f,p\in I$ and $r(X)\in f(X)-p(X)q(X)$, we have $r\in I$. Note that by the minimality of $m$, all nonzero polynomial in $f(X)-p(X)q(X)$ has degree at least $m$. If $r\ne0$ then 
$$\min\{\deg f,\deg(p)+\deg(q)\}\le\deg r\le\max\{\deg f,\deg(p)+\deg(q)\}.$$
In particular $\deg(r)\ge m$ (because $\deg(f)\le m$), contradicting $deg(r)<m$. Hence $r=0$ and $I=\langle p\rangle$. In particular, $I=F[X]\cdot p(X)$.
\end{proof}

Let $R, S$ be full superrings and $h : R \to S$ be a morphism. Then $h$ extends naturally to a morphism in the superrings multipolynomials $h^X : 
R[X] \to S[X]$:
$$(a_n)_{n \in \mathbb{N}} \in R[X] \ \mapsto \ (h(a_n))_{n \in \mathbb{N}} \in S[X]$$

Now let $s \in S$. We define the $h$-\textbf{evaluation} of $s$ at $f(X)\in R[X]$ with $f(X)=a_0+a_1X+...+a_nX^n$ by
$$f^h(s)=ev^h(s,f):=\{s'\in S : s'\in h(a_0)+ h(a_1).s+h(a_2).s^2+...+h(a_n).s^n\}.$$
We define the $h$-\textbf{evaluation} for a subset $I\subseteq S$ by
$$f^h(I)=\bigcup_{s\in I}f^h(s).$$

In particular if $S\supseteq R$ are full superrings and $\alpha\in S$, we have the \textbf{evaluation} of $\alpha$ at  $f(X)\in R[X]$ by
$$f(\alpha,S)=ev(\alpha,f,S)=\{b\in S: b \in a_0+a_1\alpha+a_2\alpha^2+...+a_n\alpha^n\}\subseteq S.$$
When $S=R$ we just write $f(\alpha,R)$ by $f(\alpha)$. 

A \textbf{root} of $f$ in $S$ is an element $\alpha\in S$ such that $0\in ev(\alpha,f,S)$. In this case we say that $\alpha$ is \textbf{$S$-algebraic} over $R$. An \textbf{effective root} of $f$ in $S$ is an element $\alpha\in S$ such that $f\in(X-\alpha)\cdot g(X)$ for some $g(X)\in R[X]$.

Observe that, if $F$ is a field, the evaluation of $F[X]$ as a ring coincide with the usual evaluation, and, of course, root and effective roots are the same thing. Therefore, if $F$ is algebraically closed as hyperfield and superfield, then will be algebraically closed in the usual sense. 

\begin{ex}[Polynomials can have infinite roots]\label{inftyroots}
Let $F$ be a infinite pre-special hyperfield. Then $F$ has characteristic $0$, $a^2=1$ for all $a\ne0$ so the polynomial $f(X)=X^2-1$ has infinite roots (i.e, $0\in ev(f,\alpha)$ for all $\alpha\in\dot F$).
\end{ex}

The next result say that for hyperfields, every root is in fact an effective root:

\begin{teo}[Adapted from Theorem 7 in \cite{ameri2019superring}]\label{effect}
 Let $F$ be a hyperfield. If $\alpha$ is a root of $f(X)\in F[X]$ then there exist $g(X)\in F[X]$ such that $f(x)\in 
(X-\alpha)\cdot g(X)$.
\end{teo}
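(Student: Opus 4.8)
The goal is to mimic the classical argument that a root $\alpha$ of $f$ produces a linear factor $(X-\alpha)$, but using the Euclidean division available in $F[X]$ by Theorem \ref{euclid}. First I would apply Theorem \ref{euclid} with divisor $g(X) = X - \alpha$ (which is nonzero and monic of degree $1$): there exist $q(X), r(X) \in F[X]$ with $f(X) \in (X-\alpha)q(X) + r(X)$ and either $r(X) = 0$ or $\deg r(X) < 1$, i.e. $r(X) = \underline{c}$ for some constant $c \in F$. So $f(X) \in (X-\alpha)q(X) + \underline{c}$.

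Next I would evaluate at $\alpha$. The delicate point is that evaluation is not a morphism in the naive sense, but one can track memberships: from $f(X) \in (X-\alpha)q(X) + \underline{c}$ and the definition of the multioperations on $R[X]$, reading off the constant-term relations and using $ev(\alpha, -, F)$ together with Lemma \ref{lembasic1} and the basic facts about sums and products in superrings, one gets $0 \in ev(\alpha, f, F) \subseteq ev(\alpha, (X-\alpha)q(X), F) + c$. Since $\alpha$ is a root, $0 \in ev(\alpha, f, F)$; and $ev(\alpha, X - \alpha, F) \ni 0$ because $0 \in \alpha - \alpha$ in the hyperfield $F$, so $0 \in ev(\alpha, (X-\alpha)q(X), F)$. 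Combining, $0 \in 0 + c = \{c\}$, hence $c = 0$, so $r(X) = 0$ and $f(X) \in (X-\alpha)q(X)$, which is exactly the claim with $g = q$.

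The step I expect to be the main obstacle is making the "evaluate at $\alpha$" move rigorous in the multivalued setting: because $F[X]$ need not be full (see the Remark after Lemma \ref{lemperm}), the containment $f(X) \in (X-\alpha)q(X) + \underline{c}$ does not literally distribute, and one must argue purely at the level of coefficient sequences — the $n$-th coefficient of some element of $(X-\alpha)q(X)$ lies in $q_{n-1} + (-\alpha)q_n$ — and then feed these into the evaluation map, checking that $0 \in ev(\alpha, f, F)$ forces the constant relation to close up. This is essentially a bookkeeping argument with the conventions for finite sums of finite products set up just before Lemma \ref{lembasic1}, but it is where all the care is needed; everything else (existence of $b_m^{-1}$-type inverses, degree bounds) is immediate from the hyperfield hypothesis and the results already proved.
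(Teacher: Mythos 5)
Your final inference is a non-sequitur. From $0 \in ev(\alpha,f,F) \subseteq ev(\alpha,(X-\alpha)q,F) + c$ and $0 \in ev(\alpha,(X-\alpha)q,F)$ you cannot conclude $0 \in 0+c$; the containment only tells you that there is \emph{some} $s \in ev(\alpha,(X-\alpha)q,F)$ with $0\in s+c$ (equivalently $s=-c$), and nothing pins that witness to be the element $0$ that you separately know lies in the evaluation set. In a hyperfield these evaluation sets are genuinely multivalued, so the two facts do not meet. Concretely, take $F=Q_2$, $\alpha=1$, $f(X)=X-1$, $q(X)=\underline{1}$ and $c=1$: then $0\in ev(1,f)=-1+1=\{-1,0,1\}$, so $1$ is a root, and also $f\in (X-1)\cdot\underline{1} + \underline{1}$, since the constant coefficient of the right-hand side ranges over $-1+1\ni -1$. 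Thus Theorem~\ref{euclid} is perfectly entitled to return the pair $(q,r)=(\underline{1},\underline{1})$ with $c=1\ne 0$, and your argument would falsely conclude $c=0$. The Euclidean-division route, as you set it up, gives no control over which quotient/remainder pair you receive, so the ``remainder must vanish'' step is exactly the missing piece.

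For comparison, the paper does not supply its own proof of Theorem~\ref{effect}: it is imported from \cite{ameri2019superring}. A proof faithful to your intuition but avoiding the trap constructs $g$ directly, top coefficient first. With $f=f_0+f_1X+\dots+f_nX^n$ and $\alpha\ne 0$, set $g_{n-1}:=f_n$ and recursively choose $g_{i-1}\in f_i+\alpha g_i$ for $i=n-1,\dots,1$. Since $F$ is a hyperfield, full distributivity holds in $F$, and together with Lemma~\ref{lembasic1} one checks that the set of achievable $g_0$ is exactly $f_1+\alpha f_2+\dots+\alpha^{n-1}f_n$; the root hypothesis $0\in f_0+f_1\alpha+\dots+f_n\alpha^n$ forces $-f_0\in \alpha\bigl(f_1+\alpha f_2+\dots+\alpha^{n-1}f_n\bigr)$, hence $-\alpha^{-1}f_0$ is an achievable $g_0$. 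With that choice the coefficient-by-coefficient conditions for $f\in(X-\alpha)g$ (namely $f_0=-\alpha g_0$, $f_i\in g_{i-1}-\alpha g_i$ for $1\le i\le n-1$, and $f_n=g_{n-1}$) are all satisfied; the case $\alpha=0$ is trivial since then $f_0=0$ and $g$ is $f$ shifted down one degree. The root hypothesis is thus used to \emph{certify} that a consistent choice of $g$ exists, not to force an arbitrary remainder to vanish.
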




\begin{defn}[Irreducibility]
Let $R$ be a full superfield and $f,d\in R[X]$. We say that $d$ divides $f$ if and only if $f\in\langle d\rangle$, and denote $d|f$. 
We say that $f$ is \textbf{irreducible} if $\deg f\ge1$ and $u|f$ for some $u\in R[X]$ (i.e, $f\in\langle u\rangle$), then 
$\langle f\rangle=\langle u\rangle$.\\
\end{defn}

\begin{teo}\label{lemquadext2}
Let $F$ be a full superfield and $p(X)\in F[X]$ be an irreducible polynomial. Then $\langle p(X)\rangle$ is a maximal ideal. In particular, $F[X]/\langle p(X)\rangle$ is a (full) superfield. We write $$F(p):=F(p(X))=F[X]/\langle p(X)\rangle.$$
\end{teo}
\begin{proof}
Let $p(X)$ be irreducible and $I\subseteq F[X]$ an ideal with $\langle p(X)\rangle\subseteq I$. By Theorem \ref{teoPID}, 
   $$I=\langle f(X)\rangle=F[X]\cdot f(X)$$
   for some $f(X)\in F[X]$. Since $p(X)\in I=\langle f(X)\rangle$, then $p(X)\in f(X)g(X)$ for some $g(X)\in F[X]$. Since $p(X)$ is irreducible, either $f(X)$ or $g(X)$ is a constant polynomial. If $f(X)$ is constant, then $I=F[X]$, and if $g(X)$ is constant, $I=\langle p(X)\rangle$, which proves that $\langle p(X)\rangle$ is maximal.

   Note that item (vi) in Lemma \ref{lem1} provide that $F[X]/\langle p(X)\rangle$ is a full superfield.
\end{proof}

\section{A New proof of Hauptsatz for Reduced Special Groups}\label{newresg}

In this Section, we present an alternative proof of APH for reduced special groups, using multivalued structures and inspired in the machinery developed for M. Marshall in \cite{marshall2006real}. This alternative proof avoid the use of the invariants developed in \cite{dickmann2000special}. Moreover, we obtain a glance of the necessary new methods to attack the APH for special groups in general, which will be developed in the next Sections.

$ $

In the realm of multirings, the notion of the so called "Marshall's  quotient", introduced in \cite{marshall2006real} and further developed in \cite{ribeiro2016functorial}, is a quotient multiring  defined for pair $(A,S)$ where $A$ is a multiring and $S \subseteq A$ is a multiplicative subset\footnote{For a brief exposition of the theories of special groups, multirings, hyperfields and its relations the reader can look at the Appendix \ref{preliminaries-section}.}: given $a, b \in A $, 
$$a \approx_S b \ \mbox{ iff there are}\ x,y \in S \ \mbox{such that}\  ax =by.$$

Now we introduce the following: let $G$ be a pre-special group and $T\subseteq G$ be a \textbf{preordering}, i.e, a multiplicative subset ($T\cdot T\subseteq T$) with $1\in T$ and for all $a,b\in T$, $D_G(a,b)\subseteq T$. Define an equivalence relation on $G$ by
$$g\sim h\mbox{ iff }gr=hs\mbox{ for some }r,s\in T.$$
Denote $G/_mT:=G/\sim$ and denote by $[g]$ the equivalence class of $g\in G$. We define
$$\langle[a],[b]\rangle\equiv_{G/_mT}\langle[c],[d]\rangle\mbox{ iff }[ab]=[cd]\mbox{ and }acr\in D_G(1,cds)\mbox{ for some }r,s\in T.$$

\begin{teo}\label{teocdpe}
    With the above prescriptions, $G/_mT$ is a reduced special group, called the \textbf{Marshall quotient of $G$ by $T$}.
\end{teo}
\begin{proof}    
    The proof is easier in the language of hyperfields: using the machinery of 3.13-3.18 of \cite{ribeiro2016functorial} (or 3.2-3.10 of \cite{roberto2021quadratic}) we interpret $F:=G\cup\{0\}$ as a hyperfield and $D_G(1,a)$ as $1+a$. In Proposition 3.17 \cite{ribeiro2016functorial} in conjunction with Theorem 3.3 of \cite{ribeiro2016functorial} and the machinery developed in Section 4 of \cite{marshall2006real} we conclude that: i) every reduced special group (interpreted as a hyperfield) is a real reduced hyperfield ($a^2=1$, $1\ne-1$ and $1+1=\{1\}$); ii) if a pre-special group $G$ satisfies [SG0-SG5] and [red], then $G$ satisfy [SG6], and hence, $G$ is a reduced special group.
    
    Then, in our case we have that $F/_mT\cong\{0,1\}$ or $F/_mT$ is a real reduced hyperfield, which means that $G/_mT$ is a reduced special group.
\end{proof}

The Marshall quotient of reduced special groups has a universal property, which is similar to the Marshall quotient of hyperfields:

\begin{teo}[Proposition 2.19 of \cite{ribeiro2016functorial}]\label{mquotup}
    Let $G$ be a special group and $T\subseteq G$ be a multiplicative subset with $1\in T$. The projection map $\pi:G\rightarrow G/_mT$ is a universal morphism satisfying $\pi(T) = \{1\}$, that is, given a morphism $f:G\rightarrow H$ with $f(T)= \{1\}$, there is an unique morphism $[f]:G/_mT\rightarrow H$ such that $f=[f]\circ\pi$. In other words, for every morphism $f:G\rightarrow H$ such that $f[S]=\{1\}$, there exist a unique morphism $[f]:G/_mT\rightarrow H$ such that the following diagram commute:
    $$\xymatrix@!=4.5pc{G\ar[r]^{\pi}\ar[dr]_{f} & G/_mT\ar[d]^{![f]} \\ & H}$$
\end{teo}

In fact, if $G$ is a reduced special group and $H\subseteq G$ is a Pfister subgroup (see Definition 2.15 of \cite{dickmann2000special}), then the Marshall quotient $G/_mH$ is isomorphic to the Pfister quotient of $G$ by $H$ (just compare our structure to the one defined in Sections 2 and 3 of Chapter 2 in \cite{dickmann2000special}).

\begin{defn}
    Let $G$ be a reduced special group and $a\in G$. We simply denote 
    $$G(a):=G/_m D_G(1,a).$$
\end{defn}

For the benefit of the reader we prove the following Theorem.

\begin{teo}
    Let $G$ be a reduced special group and $a\in G$. Then $G(a)$ is a reduced special group.
\end{teo}
\begin{proof}
    By Theorem \ref{teocdpe} we only need to prove that $D_G(1,a)$ is a preordering. The proof is easier in the language of hyperfields: using the machinery of Proposition 3.15 of \cite{ribeiro2016functorial} (or Theorem 3.9 of \cite{roberto2021quadratic}) we interpret $D_G(1,a)$ as $1+a$. In Proposition 3.15 of \cite{ribeiro2016functorial} is stated that $D_G(1,a)$ is multiplicative, then $(1+a)(1+a)\subseteq(1+a)$. Now, observe that
    $$(1+a)+(1+a)=1\cdot(1+a)+a\cdot(1+a)=(1+a)(1+a)\subseteq(1+a),$$
    proving that $1+a=D_G(1,a)$ is a preordering of $G$.
\end{proof}

\begin{teo}\label{sext1red}
    Let $G$ be a reduced special group and $a,b\in G$. Then
    $$G(a)(b)\cong G(b)(a).$$
\end{teo}
\begin{proof}
    For instance, denote the elements of $G(a),G(b),G(a)(b)$ and $G(b)(a)$ respectively by $[g]_{a},[g]_{b},[g]_{(a,b)}$ and $[g]_{(b,a)}$, $g\in G$.

    Let $\varphi:G\rightarrow G(a)(b)$ given by $\varphi(g)=[g]_{(a,b)}$. Then $\varphi$ is a morphism such that $\varphi[D_G(1,b)]=\{[1]_{(a,b)}\}$. By the universal property \ref{mquotup} there is an unique morphism $$\varphi_{b}:G/_mD_G(1,b)=G(b)\rightarrow G(a)(b)$$
    such that $\varphi_b\circ\pi=\varphi$, which is given by $\varphi_{b}([g]_b)=[g]_{(a,b)}$. Moreover, $\varphi_{b}[D_{G(b)}(1,a)]=\{[1]_{(b,a)}\}$, which imply (by the universal property \ref{mquotup}) that there exists an unique morphism    $$\varphi_{(b,a)}:G(b)/_mD_{G(b)}(1,a)=G(b)(a)\rightarrow G(a)(b)$$
    such that $\varphi_{(b,a)}\circ\pi=\varphi_b$, which is given by $\varphi_{(b,a)}([g]_{(b,a)})=[g]_{(a,b)}$.

    Similarly, we get an unique morphism 
    $$\psi_{(a,b)}:G(a)(b)\rightarrow G(b)(a).$$
    By the universal property we get that $\varphi_{(b,a)}$ and $\psi_{(a,b)}$ are isomorphisms, inverse of each other.
\end{proof}

With Theorem \ref{sext1red} we are able to properly iterate the construction $G(a)$. For $a_1,...,a_{n+1}\in G$ with $n\ge1$, we define recursively:
\begin{align*}
    G(a_1,...,a_{n+1}):=G(a_1,...,a_n)(a_{n+1}).
\end{align*}

\begin{lem}
    Let $G$ be a reduced special group, $a_1,...,a_n\in G$, and $\sigma\in S_n$. Then
    $$G(a_{\sigma(1)},...,a_{\sigma(n)})\cong G(a_1,...,a_n).$$
\end{lem}
\begin{proof}
    Just use \ref{sext1}, the fact that $S_n$ is generated by transpositions and induction.
\end{proof}

In the sequel, our objective is characterise for a reduced special group $G$, whether $[a]=[b]$ in $G(a_1,...,a_n)$ in terms of the equations in $G$. 

\begin{teo}\label{iso0red}
 Let $G$ be a reduced special group and $a,b,c\in G$. Then $[a]=[b]$ in $G(a)$ iff there is $s,t\in D_G(1,a)$ with $as=bt$ (or $a=bst$ or even $b=ats$).
\end{teo}
\begin{proof}
    Just use the Definition of $G(a)$ and the Marshall quotient.
\end{proof}

\begin{teo}\label{teo32red}
    Let $G$ be a reduced special group and $a_1,...,a_n,b,c\in G$. Then $[a]=[b]$ in $G(a_1,...,a_n)$ iff there is $s,t\in D_G(\langle\langle a_1,a_2,...,a_n\rangle\rangle)$ such that $as=bt$ (or $a=bst$ or even $ab\in D_G(\langle\langle a_1,a_2,...,a_n\rangle\rangle)$).
\end{teo}
\begin{proof}
    We proceed by induction. The case $n=1$ is just Theorem \ref{iso0red} (since $\langle\langle a\rangle\rangle=\langle1,a\rangle$). Now suppose the desired valid for $n$, and let $[a]=[b]$ in $G(a_1,...,a_{n+1})$. Since $$G(a_1,...,a_{n+1})\cong G(a_1,...,a_n)(a_{n+1}),$$
    by Theorem \ref{iso0red} we have $[ar]=[bs]$ for some $[r],[s]\in D_{G(a_1,...,a_n)}(1,[a_{n+1}])$. By induction hypothesis, we get $art=bsw$ with $[r],[s]\in D_{G(a_1,...,a_n)}(1,[a_{n+1}])$ and $t,w\in D_G(\langle\langle a_1,...,a_n\rangle\rangle)$.

    Now, observe that $[r]\in D_{G(a_1,...,a_n)}(1,[a_{n+1}])$ means $[r]=[r'a_{n+1}]$ for some $[r']\in G(a_1,...,a_n)$. By induction hypotesis, $ru=r'a_{n+1}v$ for some $u,v\in D_G(\langle\langle a_1,...,a_n\rangle\rangle)$. Then
    \begin{align*}
        atu&=a(rt)(ru)=b(sw)(r'a_{n+1}v)=b(swr'a_{n+1}v),
    \end{align*}
    with $tu,swr'a_{n+1}v\in D_G\langle\langle a_1,a_2,...,a_{n+1}\rangle\rangle$ (here we use that Pfister forms are multiplicative).
\end{proof}

\begin{cor}\label{cor33red}
  Let $G$ be a reduced special group and $a_1,a_2,...,a_n\in G$. Then $G(a_1,a_2,...,a_n)$ is formally real iff $-1\notin D_G(\langle\langle a_1,a_2,...,a_n\rangle\rangle)$.
\end{cor}
\begin{proof}
Since $G(a_1,a_2,...,a_n)$ is a reduced special group, we have $G(a_1,a_2,...,a_n)$ formally real iff $[1]\ne[-1]$, which by Theorem \ref{teo32red} occurs iff $-1\notin D_G(\langle\langle\alpha_1,\alpha_2,...,\alpha_n\rangle\rangle)$.
\end{proof}

 We start establishing the following:

 \begin{rem}[Notations and Facts]\label{pfisternotation}
 Here we summarize some notations about Witt equivalence, Pfister forms and fundamental ideal (for more details, see for instance \cite{dickmann2000special} or \cite{lam2005introduction}):
     \begin{itemize}
    \item Let $\varphi,\psi$ be forms on a special group $G$. We say that $\varphi$ and $\psi$ are \textup{Witt equivalent}, notation $\varphi\approx_{W,G}\psi$ iff there exist non negative integers $k,l$ such that $k\langle1,-1\rangle\oplus\varphi\equiv_Gl\langle1,-1\rangle\oplus\psi$. By Witt's Decomposition, if $\varphi$ is a form on $G$, there are unique forms $\varphi_{an},\varphi_{hip},\varphi_{0}$ (up to isometry) with $\varphi\equiv\varphi_{an}\oplus\varphi_{hip}\oplus\varphi_{0}$, $\varphi_{an}$ anisotropic, $\varphi_{hip}$ hyperbolic and $\varphi_{0}$ totally isotropic. We define $\dim_{W,G}(\varphi):=\dim(\varphi_{an})$.
    
    \item  Let $G$ be a special group. For each $n \in \mathbb{N}$ consider the statement:\\
    $AP_G(n)$ For each $\varphi = \langle a_1,\cdots, a_k \rangle$, a  non-empty ($k\geq 1$), regular ($a_i \in G$) and anisotropic form, if  $\varphi\in I^n(G)$, then $\dim(\varphi)\ge2^n$.

    \item A Pfister form of degree $n \geq 1$, with coefficients $a_1, \cdots, a_n \in G$ is $\langle\langle a_1, \cdots, a_n \rangle \rangle = \otimes_{i=1}^{n} \langle 1, a_i\rangle$.
    
    \item Let  $\psi$ be a Pfister form. Then $\psi$ is hyperbolic iff it is isotropic. Moreover, if $G$ is reduced and $-1 \in D_G(\psi)$, then $\psi$ is hyperbolic. 

    \item  $I^n(G) \subseteq W(G)$ is additively generated by the Pfister forms of degree $n$. 
    
    \item  If $\varphi \in I^n(G) \setminus\{\emptyset\}$, then $\varphi = \varepsilon_1\varphi_1+...+\varepsilon_r\varphi_r$, where  $r\ge1$ and $\varepsilon_j=\pm1$ for all $j=1,...,r$. Moreover, if $\varphi$ is anisotropic, we suppose without loss of generality that $\varepsilon_j=1$ for all $j=1,...,r$.
\end{itemize}
 \end{rem}
 
\begin{teo}[Arason-Pfister Hauptsatz]\label{haupred}
 Let $G$ be a reduced special group, then  it holds $AP_G(n)$, for all $n \geq 0$. In more details: for each  $n \geq 0$ and For each $\varphi = \langle a_1,\cdots, a_k \rangle$, a  non-empty ($k\geq 1$), regular and anisotropic form, if $\varphi\in I^n(G)$, then $\dim(\varphi)\ge2^n$  $\varphi\in I^n(G)$, if $\varphi \neq \emptyset$ is anisotropic, then $\dim_{W,G}(\varphi)\ge2^n$.
\end{teo}

\begin{proof}

An equivalent way to state this result is the following: if a form $q$ belongs to $I^nG$ and $\dim q<2^n$ then $q$ must be a hyperbolic form.

 Since $\varphi$ is an anisotropic form such that $\varphi \in I^nG \setminus \{\emptyset\}$ and  $I^nG$ is additively generated by the Pfister forms, then there exists $r \geq 1$ and Pfister forms of degree $n$, $\varphi_1, \cdots \varphi_r$ such that $\varphi= \pm(\varphi_1+...+\varphi_r)$.  

Since $\varphi$ is anisotropic, we can suppose without loss of generality that $\varphi= \varphi_1+...+\varphi_r$ and proceed by induction on $r$. 

If $r=1$, then $\varphi=\varphi_1$, with $\dim(\varphi)=\dim(\varphi_1)=2^n$.

Let $r\ge2$. If $\varphi_j$ is isotropic for all $j=1,...,r$ then $\varphi$ is isotropic (hyperbolic, in fact): this fallows from Witt's cancellation law since $\varphi \oplus k \langle 1, -1 \rangle \equiv (r2^{n-1}+m) \langle 1, -1 \rangle $. So we can suppose without loss of generality that $\varphi_1=\langle\langle a_1,...,a_n\rangle\rangle$ is anisotropic.

Suppose $-1\notin D_G(\varphi_1)$. Let $G(\varphi_1):=G(a_1,...,a_n)$. Then equivalence class of $\varphi$ on $G(\varphi_1)$ is
$$[\varphi]=[\varphi_1+...+\varphi_r]=[\varphi_1]+...+[\varphi_r]=2^n\langle1\rangle+[\varphi_2]+...+[\varphi_r].$$
We already know that $\dim_{W,G}(\varphi)\ge\dim_{W,G(\varphi_1)}[\varphi]$. Then we have three cases:
\begin{description}
\item [I - ] $[\varphi]$ is hyperbolic. Then $([\varphi_2]+...+[\varphi_r])_{an}\equiv_{G(\varphi_1)}2^n\langle-1\rangle$. Then
$$\dim_{W,G}(\varphi)\ge\dim_{W,G}(\varphi_2+...+\varphi_r)
\ge\dim_{W,G}(\varphi_2+...+\varphi_r)_{an}\ge\dim_{W,G}([\varphi_2]+...+[\varphi_r])_{an}=2^n.$$

\item [II - ] $[\varphi]$ is not hyperbolic and $[\varphi_2]+...+[\varphi_r]$ is anisotropic. Then $\varphi_2+...+\varphi_r$ is anisotropic. By induction hypothesis we have $\dim_{W,G}(\varphi_2+...+\varphi_r)\ge2^n$. Then
$$\dim_{W,G}(\varphi)\ge\dim_{W,G}(\varphi_2+...+\varphi_r)\ge2^n.$$

\item [III - ] $[\varphi]$ is not hyperbolic and $[\varphi_2]+...+[\varphi_r]$ is isotropic. Since $[\varphi]$ is not hyperbolic, we can assume that $[\varphi_2]$ is anisotropic (otherwise, if $[\varphi_j]$ is isotropic for all $j=2,...,r$ then $[\varphi]$ is an isotropic Pfister form and then, is also hyperbolic). Denote $G_1:=G(\varphi_1)$. In $G_1([\varphi_2])$ (which is a reduced special group) look at 
$$\psi_2:=[[\varphi_2]+...+[\varphi_r]]=2^n\langle1\rangle+[\varphi_3]+...+[\varphi_r]\in G_1([\varphi_2]).$$
For $\psi_2\in I^n(G_1([\varphi_2]))$ we have
$$\dim_{W,G}(\varphi)\ge\dim_{W,G(\varphi_1)}[\varphi]\ge
\dim_{W,G_1([\varphi_2])}[\psi_2]$$
and the same cases I, II and III for $\psi_2$. Suppose without loss of generality that we are in case III, i.e, that $[\varphi_3]+...+[\varphi_r]$ is isotropic in $G_1([\varphi_2])$. If $[\varphi_j]$ is isotropic in $G_1([\varphi_2])$ for all $j\ge3$, then we are in case I. Now suppose $[\varphi_3]$ anisotropic in $G_1([\varphi_2])$ and denote $G_2:=G_1([\varphi_2])$. In $G_2([\varphi_3])$ (which is a reduced special group) look at 
$$\psi_3:=[[\varphi_3]+...+[\varphi_r]]=2^n\langle1\rangle+[\varphi_4]...+[\varphi_r]\in S_{F_2}([\varphi_3]).$$
For $\psi_3\in I^n(G_2([\varphi_3]))$ we have
$$\dim_{W,G}(\varphi)\ge\dim_{W,G(\varphi_1)}[\varphi]\ge
\dim_{W,G_1([\varphi_2])}[\psi_2]\ge
\dim_{W,G_2([\varphi_3])}[\psi_3].$$
and the same cases I, II and III for $\psi_3$. Repeating this process more $r-3$ times, we get at $[\varphi_r]$ in $G_{r-2}([\varphi_{r-1}])$ and
\begin{align*}
  \dim_{W,G}(\varphi)&\ge\dim_{W,G(\varphi_1)}[\varphi]\ge
\dim_{W,G_1([\varphi_2])}[\psi_2] \\
&\ge\dim_{W,G_2([\varphi_3])}[\psi_3] \ge...\ge
\dim_{W,G_{r-2}([\varphi_{r-1}])}[\psi_{r-1}].  
\end{align*}
Now, if $[\varphi_r]$ is isotropic in $G_{r-2}([\varphi_{r-1}])$ then $[\varphi_r]$ is hyperbolic in $G_{r-2}([\varphi_{r-1}])$, which by case I imply $\dim_{W,G_{r-2}([\varphi_{r-1}])}[\psi_{r-1}]\ge2^n$. If $[\varphi_r]$ is anisotropic in $G_{r-2}([\varphi_{r-1}])$  we are in case II and also $\dim_{W,G_{r-2}([\varphi_{r-1}])}[\psi_{r-1}]\ge2^n$.
\end{description}

Now suppose $-1\in D_G(\varphi)$. Then $G(\varphi_1)\cong\{1\}$, which imply $[\varphi]$ is hyperbolic, enabling us to use the very an adapted version argument in Case (I) above: the equivalence class of $\varphi$ on $G(\varphi_1)$ still is given by
$$[\varphi]=[\varphi_1+...+\varphi_r]=[\varphi_1]+...+[\varphi_r]=2^n\langle1\rangle+[\varphi_2]+...+[\varphi_r].$$
Then we have $[\varphi_2]+...+[\varphi_r]\equiv_{G(\varphi_1)}2^n\langle-1\rangle$, implying that
$$\dim_{W,G}(\varphi)\ge\dim_{W,G}(\varphi_2+...+\varphi_r)
\ge\dim_{W,G}([\varphi_2]+...+[\varphi_r])=2^n.$$
\end{proof}

The main difficult to extend the proof of \ref{haupred} for general special groups is the fact that $1-a$ is a saturated subgroup of $G$ iff $G$ is reduced, then the construction $G(a)$ is only available for reduced special groups.

Another tentative would be considering the special group of a field extension and try to mimic this structure in the general case. But it is highly non trivial, as we will see in the following Examples.

\begin{ex}[Quadratic Field Extensions and Quadratic Hyperfield Extensions]\label{exquad1}
Let $F$ be a formally real field and $p\in F^*$ such that $x^2-p$ has no roots in $F$. Consider $K=F(\sqrt p)$. Of course, we have two special hyperfields (and special groups) $G(K):=K/_m(K^2)^*$ and $G_{red}(K)=K/_m(\sum K^2)^*$. Note that if $a+b\sqrt p\in \dot K$, then
$$(a+b\sqrt p)^2=a^2+pb^2+2ab\sqrt p\in D_F(\langle1,p\rangle)+\sqrt p\cdot F,$$
where $D_F(\langle1,p\rangle)$ is the usual set of representatives of the $F$-quadratic form $\langle1,p\rangle$:
$$D_F(\langle1,p\rangle):=\{x^2+y^2p: x,y\in\dot F\}.$$
In other words,
$$K^2\setminus\{0\}\subseteq D_F(\langle1,p\rangle)+\sqrt p\cdot F.$$
Moreover,
\begin{align*}
   (a+b\sqrt p)^2+(c+d\sqrt p)^2
   &=(a^2+pb^2+2pab\sqrt p)+(c^2+pd^2+2cd\sqrt p) \\
   &=(a^2+pb^2+c^2+pd^2)+2(ab+cd)\sqrt p.
\end{align*}
Using the fact that $D_F(\langle1,p\rangle)\cdot D_F(\langle1,p\rangle)\subseteq D_F(\langle1,p\rangle)$ and for $a,b,c,d\ne0$,
$$a^2+pb^2+c^2+pd^2\in D_F(\langle1,p,1,p\rangle)=
D_F(\langle1,p\rangle\otimes\langle1,p\rangle)=D_F(\langle1,p\rangle)\cdot D_F(\langle1,p\rangle),$$
we conclude by induction (and a case analysis involving $0\in\{a,b,c,d\}$) that
$$\sum K^2\setminus\{0\}\subseteq D_F(\langle1,p\rangle)+\sqrt p\cdot F.$$
So, let $Q_p:=D_K(\langle1,p\rangle)+\sqrt p\cdot F$. Then $Q_p\cdot Q_p$ is a multiplicative set containing $\sum K^2$. Define $G_{\sqrt p}(K):=K/_mQ_p$. Then $G_{\sqrt p}(K)$ is a reduced special group such that
\begin{align*}
    \tag{*}G(K) \twoheadrightarrow G_{red}(K) \twoheadrightarrow G_{\sqrt p}(K).
    \end{align*}
Moreover,
$$G_{\sqrt p}(K)\cong S_{K/_m(K^2)^*}(\sqrt p),$$
i.e, the hyperfield of Theorem \ref{teo150}. We say that $K$ is \textbf{$p$-special} if $G_{\sqrt p}(K)\cong G_{red}(K)$. 
\end{ex}

\begin{ex}[The Special Group of a quadratic extension]\label{exquad2}
Let $F$ be a formally real field and $p\in F^*$ such that $x^2-p$ has no roots in $F$. Consider $K=F(\sqrt p)$. Using the calculations of Example \ref{exquad1} we have
\begin{align*}
    \dot K^2&=D_F(1,p)+\{x^2+yb^2+z\sqrt p:x,y\ne\dot F\mbox{ and }z=(x+y)^2-(x^2+y^2)\} \\
    &=\{x^2+yb^2+z\sqrt p:x,y\ne F\mbox{ are not both 0 and }z=(x+y)^2-(x^2+y^2)\}.
\end{align*}
In this sense, for $a,b,c,d\in\dot K$, what means $\langle a,b\rangle\equiv_K\langle c,d\rangle$ in terms of the isometry relation on $F$?

By Lemma 1.5(a) of \cite{dickmann2000special} we have
\begin{align*}
    \langle a,b\rangle\equiv_K\langle c,d\rangle\mbox{ iff }ab=cd\mbox{ and }
    ac\in D_K(1,cd).
\end{align*}
Lets first understand what means $\beta \in D_K(1,\alpha)$ for $\alpha,\beta \in\dot K$. By definition,
$$\beta \in D_K(1,\alpha)\mbox{ iff }\beta =x^2+\alpha y^2,\,x,y\in\dot K.$$
Write $\alpha=\alpha_1+\alpha_2\sqrt p$, $\beta =\beta_1+\beta_2\sqrt p$, $x=x_1+x_2\sqrt p$ and $y=y_1+y_2\sqrt p$. Then
\begin{align*}
    \beta&=x^2+\alpha y^2\Leftrightarrow\\
    \beta_1+\beta_2\sqrt p&=(x_1+x_2\sqrt p)^2+(\alpha_1+\alpha_2\sqrt p)(y_1+y_2\sqrt p)^2\Leftrightarrow \\
    \beta_1+\beta_2\sqrt p&=(x_1^2+px_2^2+2x_1x_2\sqrt p)+
    (\alpha_1+\alpha_2\sqrt p)(y_1^2+py_2^2+2y_1y_2\sqrt p)\Leftrightarrow \\
    \beta_1+\beta_2\sqrt p&=(x_1^2+px_2^2+\alpha_1y_1^2+\alpha_1py_2^2+2p\alpha_2y_1y_2)
    +(2x_1x_2+2\alpha_1y_1y_2+\alpha_2y_1^2+\alpha_2py_2^2)\sqrt p \\
    &\Leftrightarrow\begin{cases}\beta_1=x_1^2+px_2^2+\alpha_1y_1^2+\alpha_1py_2^2+2p\alpha_2y_1y_2
    \\\beta_2=2x_1x_2+2\alpha_1y_1y_2+\alpha_2y_1^2+\alpha_2py_2^2
    \end{cases} \\
    &\Leftrightarrow\begin{cases}\beta_1+\beta_2=(x_1+x_2)^2+(\alpha_1+\alpha_2p)(y_1+y_2)^2+(p-1)(x_2^2+\alpha_1y_2^2-\alpha_2y_1^2)
    \\\beta_1-\beta_2=(x_1-x_2)^2+(\alpha_1-\alpha_2p)(y_1-y_2)^2+(p-1)(x_2^2+\alpha_1y_2^2+\alpha_2y_1^2)
    \end{cases}
\end{align*}
Then
\begin{align}\label{eqkrep1}
    \beta&=x^2+ay^2\Leftrightarrow\begin{cases}\beta_1+\beta_2=(x_1+x_2)^2+(a_1+a_2p)(y_1+y_2)^2+(p-1)(x_2^2+a_1y_2^2-a_2y_1^2)
    \\\beta_1-\beta_2=(x_1-x_2)^2+(a_1-a_2p)(y_1-y_2)^2+(p-1)(x_2^2+a_1y_2^2+a_2y_1^2)
    \end{cases}
\end{align}

For the discriminant part, let $a,b,c,d\in\dot K$ with $a=a_1+a_2\sqrt p$, $b=b_1+b_2\sqrt p$, $c=c_1+c_2\sqrt p$ and $d=d_1+d_2\sqrt p$ for suitable $a_i,b_i,c_i,d_i\in F$ ($i=1,2$). We have
\begin{align*}
    ab=cd&\Leftrightarrow
    (a_1+a_2\sqrt p)(b_1+b_2\sqrt p)=(c_1+c_2\sqrt p)(d_1+d_2\sqrt p) \\
    &\Leftrightarrow(a_1b_1+pa_2b_2)+(a_1b_2+a_2b_1)\sqrt p=
    (c_1d_1+pc_2d_2)+(c_1d_2+c_2d_1)\sqrt p \\
    &\Leftrightarrow
    \begin{cases}a_1b_1+pa_2b_2=c_1d_1+pc_2d_2 \\ a_1b_2+a_2b_1=c_1d_2+c_2d_1\end{cases} \\
    &\Leftrightarrow
    \begin{cases}(a_1b_1+pa_2b_2)+(a_1b_2+a_2b_1)=(c_1d_1+pc_2d_2)+(c_1d_2+c_2d_1) \\ (a_1b_1+pa_2b_2)-(a_1b_2+a_2b_1)=(c_1d_1+pc_2d_2)-(c_1d_2+c_2d_1)\end{cases} \\
    &\Leftrightarrow
    \begin{cases}(a_1+a_2)(b_1+b_2)+(p-1)a_2b_2=(c_1+c_2)(d_1+d_2)+(p-1)c_2d_2 \\ 
    (a_1-a_2)(b_1-b_2)+(p-1)a_2b_2=(c_1-c_2)(d_1-d_2)+(p-1)c_2d_2\end{cases}
\end{align*}
Then
\begin{align}\label{eqkrep2}
    ab=cd&\Leftrightarrow
    \begin{cases}(a_1+a_2)(b_1+b_2)+(p-1)a_2b_2=(c_1+c_2)(d_1+d_2)+(p-1)c_2d_2 \\ 
    (a_1-a_2)(b_1-b_2)+(p-1)a_2b_2=(c_1-c_2)(d_1-d_2)+(p-1)c_2d_2\end{cases}
\end{align}

Now, since $ac=(a_1c_1+pa_2c_2)+(a_1c_2+a_2c_1)\sqrt p$ and
$ad=(a_1d_1+pa_2d_2)+(a_1c_2+a_2d_1)\sqrt p$, using Equations \ref{eqkrep1} and \ref{eqkrep2} (with $\beta=ac$ and $\alpha=ad$ in Equation \ref{eqkrep1}), we have the following characterization:
\begin{align*}
    &\langle a,b\rangle\equiv_K\langle c,d\rangle\mbox{ if and only if there exists }x_1,x_2,y_1,y_2\in F
    \mbox{ such that }(x_1,x_2),(y_1,y_2)\ne(0,0)\mbox{ and }\\
    &\begin{cases}
(a_1+a_2)(b_1+b_2)+(p-1)a_2b_2=(c_1+c_2)(d_1+d_2)+(p-1)c_2d_2 \\
    (a_1-a_2)(b_1-b_2)+(p-1)a_2b_2=(c_1-c_2)(d_1-d_2)+(p-1)c_2d_2 \\
    (a_1+a_2)(c_1+c_2)+(p-1)a_2c_2=(x_1+x_2)^2
    +[p(a_1+a_2)(d_1+d_2)-(p-1)a_1d_1](y_1+y_2)^2\\
    \qquad\qquad\qquad\qquad\qquad\qquad\qquad +(p-1)[x_2^2+(a_1d_1+pa_2d_2)y_2^2-(a_1c_2+a_2d_1)y_1^2] \\
    (a_1-a_2)(c_1-c_2)+(p-1)a_2c_2=(x_1-x_2)^2
    +[p(a_1-a_2)(d_1-d_2)-(p-1)a_1d_1](y_1-y_2)^2\\
    \qquad\qquad\qquad\qquad\qquad\qquad\qquad
    +(p-1)[x_2^2+(a_1d_1+pa_2d_2)y_2^2+(a_1c_2+a_2d_1)y_1^2]
\end{cases}
\end{align*}
Manipulating Equation \ref{eqkrep2} we get a very similar system to describe when $\overline a=\overline b$ in $K/_m\dot K^2$, $a,b\in K$.
\end{ex}

We avoid these complications considering more general multivalued structures: the superrings. In the realm of superrings, we built a general Marshall quotient and provide an adequate pair $(F,T)$ of a superfield and a multiplicative subset $T\subseteq F$ which will generalize $G(a)$.

\section{The Marshall Quotient of a Superfield}\label{polynomial-section}

In the realm of multirings, the notion of the so called "Marshall's  quotient", introduced in \cite{marshall2006real} and further developed in \cite{ribeiro2016functorial}, is a quotient multiring  defined for pair $(A,S)$ where $A$ is a multiring and $S \subseteq A$ is a multiplicative subset: given $a, b \in A $, 
$$a \approx_S b \ \mbox{ iff there are}\ x,y \in S \ \mbox{such that}\  ax =by.$$

Now we introduce the following:

\begin{defn}
Let $A$ be a superring and $S \subseteq A$. The set $S$ is called \textbf{Marshall coherent} if it is multiplicative ($1\in S$ and $S\cdot S\subseteq S$) and given $x,a \in A$ with $x \in as$ for some $s \in S$, there are $P,Q \subseteq S$ such that $xP = aQ$. We say that $S$ is \textbf{nontrivial Marshall coherent} if $0\notin S$.
\end{defn}

Let $A$ be a superring with $S \subseteq A$ Marshall coherent. For $a,b\in A$, define
$$a\sim_S b\mbox{ iff there are non-empty subsets }X,Y\subseteq S\mbox{ with }aX=bY.$$

\begin{fat}
If $A$ is a multiring viewed as a superring, then every multiplicative subset $S \subseteq A$ is Marshall coherent and  the above quotient notion coincides with the original Marshall quotient, i.e. $\approx_S = \sim_S$. 
\end{fat}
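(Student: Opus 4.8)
The plan is to handle the two claims separately; both reduce to bookkeeping checks that exploit the fact that in a multiring the multiplication $\cdot : A \times A \to A$ is a genuine single-valued function, so that for $a \in A$ and $X \subseteq A$ the product $aX$ is just the image set $\{ax : x \in X\}$, and in particular each product $as$ is a single element of $A$.

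For Marshall's coherence of an arbitrary multiplicative $S \subseteq A$, I would start from the hypothesis $x \in as$ with $s \in S$, note that single-valuedness forces $x = as$, and then simply exhibit the witnessing subsets: take $P := \{1\}$ and $Q := \{s\}$. Both lie in $S$ (since $1 \in S$ because $S$ is multiplicative, and $s \in S$ by hypothesis), and $xP = \{x\} = \{as\} = aQ$, which is precisely the required identity. The ``nontrivial'' clause $0 \notin S$ is an additional hypothesis that plays no role in this verification.

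For the identity of the two equivalence relations I would prove the two inclusions. The inclusion $\approx_S \,\subseteq\, \sim_S$ is immediate: from $ax = by$ with $x, y \in S$, take the singletons $X := \{x\}$ and $Y := \{y\}$ and observe $aX = \{ax\} = \{by\} = bY$, so $a \sim_S b$. For $\sim_S \,\subseteq\, \approx_S$ I would start from $aX = bY$ with $X, Y$ non-empty subsets of $S$, pick any $x \in X$, and use $ax \in aX = bY$ to extract some $y \in Y$ with $ax = by$; since $x \in X \subseteq S$ and $y \in Y \subseteq S$, this is the desired witness for $a \approx_S b$.

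I do not expect any genuine obstacle here; the only point to be careful about is the direction $\sim_S \subseteq \approx_S$, where one should resist trying to produce a uniform choice of witnesses and instead just evaluate $a$ on a single element of $X$ and read the matching element of $Y$ directly off the set-equality $aX = bY$. At bottom the statement is the observation that, once multiplication is a function rather than a multioperation, the set-valued and element-valued versions of the Marshall relation collapse to the same thing.
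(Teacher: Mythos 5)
Your proof is correct, and it is indeed the natural argument: once multiplication is single-valued, $x \in as$ forces $x = as$, so the witnessing pair $(P,Q) = (\{1\}, \{s\})$ settles coherence, and the two relations collapse by evaluating a single element of $X$ against the set-equality $aX = bY$. The paper states this as a Fact with no proof supplied, so there is nothing to compare against; your write-up fills that gap cleanly, and your remark that the nontriviality clause $0 \notin S$ plays no role here is also accurate.
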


\begin{lem}
$ $
\begin{enumerate}[i -]
    \item For $a,b \in A$, the following are equivalent:
        \begin{enumerate}[a)]
            \item $a \sim b$.
            \item There exists $s,t \in S$ such that $as \cap bt \neq \emptyset$.
            \item There are $s,t,p,q \in S$ with 
            $a(st) = b(pq)$.
        \end{enumerate}
    \item The relation $\sim$ is an equivalence relation.
\end{enumerate}
\end{lem}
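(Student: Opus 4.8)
The plan is to prove part (i) by the cycle (c)$\,\Rightarrow\,$(a)$\,\Rightarrow\,$(b)$\,\Rightarrow\,$(c), which isolates the use of Marshall's coherence in a single step, and then to obtain part (ii) by a direct manipulation of the defining subset-equation. Throughout I will use, without comment, that the multivalued product of $A$ induces on subsets an associative and commutative operation (from the multimonoid axioms M3, M4, cf.\ Lemma~\ref{lembasic1}), that $c\cdot(U\cup V)=c\cdot U\cup c\cdot V$, that multioperations are nonempty-valued, and that $S\cdot S\subseteq S$.

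The implication (c)$\,\Rightarrow\,$(a) is immediate: if $a(st)=b(pq)$ with $s,t,p,q\in S$, then $X:=s\cdot t$ and $Y:=p\cdot q$ are nonempty subsets of $S$ with $aX=bY$. For (a)$\,\Rightarrow\,$(b): given nonempty $X,Y\subseteq S$ with $aX=bY$, pick any $s\in X$ and any $z\in a\cdot s$ (nonempty); since $z\in aX=bY=\bigcup_{y\in Y}b\cdot y$, there is $t\in Y\subseteq S$ with $z\in b\cdot t$, so $z\in as\cap bt$. Neither of these two steps uses coherence.

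The substantial step is (b)$\,\Rightarrow\,$(c). Starting from $z\in as\cap bt$ with $s,t\in S$, I would apply Marshall's coherence twice — to the membership $z\in a\cdot s$ to get $P_1,Q_1\subseteq S$ with $zP_1=aQ_1$, and to $z\in b\cdot t$ to get $P_2,Q_2\subseteq S$ with $zP_2=bQ_2$ — and then compute, using associativity and commutativity of the product on subsets,
$$a(Q_1P_2)=(aQ_1)P_2=(zP_1)P_2=z(P_1P_2)=(zP_2)P_1=(bQ_2)P_1=b(Q_2P_1),$$
with $Q_1P_2$ and $Q_2P_1$ nonempty subsets of $S$. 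This exhibits $a\sim b$ with witnessing sets of product shape; the last bit of work is to pass from these subset-products to the literal four-element form demanded by (c), extracting appropriate elements of $S$ out of $P_1,P_2,Q_1,Q_2$ while keeping the equalities intact and using $S\cdot S\subseteq S$.

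For part (ii): reflexivity is $a\cdot\{1\}=\{a\}=a\cdot\{1\}$ with $1\in S$; symmetry is the symmetry of ``$aX=bY$''; and transitivity follows because, from $aX=bY$ and $bZ=cW$ with $X,Y,Z,W\subseteq S$ nonempty, one has $a(XZ)=(aX)Z=(bY)Z=(bZ)Y=(cW)Y=c(WY)$, with $XZ,WY\subseteq S$ nonempty. I expect the main obstacle to be the closing step of (b)$\,\Rightarrow\,$(c): this is where Marshall's coherence is genuinely needed, and the delicate point is that the subsets it returns are not singletons, so the bookkeeping has to be set up so that the required single elements $s,t,p,q$ can actually be read off — in a superring, products of subsets do not in general reduce to products of elements. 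A minor preliminary, but one worth isolating first, is the verification that finite multivalued products (and sums) of subsets are associative, commutative, and distribute over unions, since this is what legitimizes every displayed identity above.
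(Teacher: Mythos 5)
Your approach is the paper's approach, step for step: the cycle (c)$\Rightarrow$(a)$\Rightarrow$(b)$\Rightarrow$(c), the double application of Marshall's coherence in (b)$\Rightarrow$(c) yielding $a(Q_1P_2)=z(P_1P_2)=b(Q_2P_1)$, and the transitivity chain $a(XZ)=(bY)Z=(bZ)Y=c(WY)$ in (ii); modulo renaming ($M,N,P,Q$ versus your $P_1,Q_1,P_2,Q_2$), the computations are identical.

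The one place you hesitate — passing from the subset identity $a(Q_1P_2)=b(Q_2P_1)$, with $Q_1P_2,Q_2P_1\subseteq S$, to literal elements $s,t,p,q\in S$ with $a(st)=b(pq)$ as (c) is worded — is, in fact, not resolved in the paper either. The paper's proof of (b)$\Rightarrow$(c) concludes with $a(PN)=x(MN)=b(QM)$, where $P,N,Q,M$ are the (arbitrary) subsets of $S$ supplied by coherence, and simply treats that as establishing (c). Arbitrary nonempty subsets of $S$ need not be of the form $s\cdot t$ for elements $s,t\in S$, so if (c) is read literally as demanding a four-element witness, neither your computation nor the paper's proves it; what is actually established is the subset-level form, which carries the same content as (a). Your caution is therefore well-placed, but it does not mark a gap relative to the paper's argument — you stopped precisely where the paper stops, and you were more honest about it.
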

\begin{proof}
we only need to deal with the case $S$ nontrivial.
\begin{enumerate}[i -]
    \item The implication $c) \Rightarrow a)$ is straightforward. For $a) \Rightarrow b)$, let $X,Y \subseteq S$ such that $aX = bY$. Then there are $s \in X$ and $t \in Y$ such that $as \cap bt \neq \emptyset$. On the other hand, for $b) \Rightarrow c)$, let $x \in as \cap bt$. Thus, by Marshall coherence, there are $M,N,P,Q \subseteq S$ such that $xM = aP$ and $xN = bQ$. Therefore, $$a(PN) = x(MN) = b(QM).$$
    \vspace{-0.8cm}
    
    \item Let $a,b,c\in A$. 
    \begin{itemize}
    \item Since $a\cdot\{1\}=a\cdot\{1\}$ and $1\in S$, we have $a\sim a$.
    
    \item If $a\sim b$, then $aX=bY$ for some $X,Y \subseteq S$. So $bY=aX$ and $b\sim a$.
    
    \item Let $a\sim b$ and $b\sim c$. Then $aX=bY$ and $bZ=cW$ for some $X,Y,Z,W \subseteq S$. Hence
    $$a(XZ) = b(YZ) = c(WY)$$ and so $a \sim c$.
\end{itemize}
\end{enumerate}
\end{proof}

Now, let $A/_mS$ be the set of equivalence classes of $\sim $. We want to prescribe a superring structure for $A/_mS$. 

For $a\in A$, let $[a]$ be the equivalence class of $a$ in $A/_mS$. Define for $[a],[b]\in A/_mS$ the congruency relations:
\begin{align*}
    [c]&\in[a]+[b]\mbox{ iff there exist } c',a',b' \in A \mbox{ with } c' \in a' + b' \mbox{ and } c' \sim c, a' \sim a, b' \sim b.\\
    [c]&\in[a][b]\mbox{ iff there exist } c',a',b' \in A \mbox{ with } c' \in a'\cdot b' \mbox{ and } c' \sim c, a' \sim a, b' \sim b.\\
    [-a] & := -[a].
\end{align*}

\begin{lem}\label{lemsum1}
 Let $A$ be a superring and $S \subseteq A$ a Marshall coherent subset. Let $a,b,c \in A$.
 \begin{enumerate}[i -]
     \item $[c] \in [a] + [b]$ iff there is $s \in S$ such that $cs \subseteq aS + bS$.
     \item $[c] \in [a] \cdot [b]$ iff there is $s \in S$ such that $cs \subseteq abS$.
 \end{enumerate}
\end{lem}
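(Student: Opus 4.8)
\textbf{Proof plan for Lemma \ref{lemsum1}.}

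The plan is to prove both items by unwinding the definition of the congruency relations and then converting the ``there exist $a', b', c'$ with $\sim$'' conditions into a single scaling condition using Marshall's coherence; item (ii) is entirely parallel to item (i), so I would write (i) in full and remark that (ii) is obtained verbatim with $+$ replaced by $\cdot$. We may assume $S$ is nontrivial (if $0 \in S$ the quotient collapses and the statement is vacuous/trivial, exactly as in the proof of the previous Lemma). Throughout I will freely use that $\sim$ is an equivalence relation and its three equivalent characterizations from the previous Lemma — in particular that $a' \sim a$ iff $a'p = aq$ for some $p,q \in S$.

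For the forward direction of (i): suppose $[c] \in [a] + [b]$. By definition there are $c', a', b' \in A$ with $c' \in a' + b'$ and $c' \sim c$, $a' \sim a$, $b' \sim b$. Using the characterization of $\sim$, pick $p,q,u,v,x,y \in S$ with $a'p = aq$, $b'u = bv$, $c'x = cy$. First I would scale the membership $c' \in a' + b'$ by a suitable product of elements of $S$: multiplying through (using the distributive/semidistributive behaviour of superrings and Lemma \ref{lembasic1}, or simply the definition of the multioperations) by $pu$ gives $c'(pu) \subseteq (a'p)u + (b'u)p = (aq)u + (bv)p \subseteq aS + bS$. Then I would further scale by $x$ to pass from $c'$ to $c$: from $c'x = cy$ we get $cy \cdot (pu) = c'x(pu) \subseteq x(aS + bS) \subseteq aS + bS$, and since $y \in S$, taking $s := ypu \in S$ (as $S$ is multiplicative) yields $cs = c y p u \subseteq aS + bS$ — modulo being a little careful that $cy(pu) = (cs)$ as sets, which follows because $c \cdot \{ypu\}$ and $(c \cdot \{y\}) \cdot \{pu\}$ agree by Lemma \ref{lembasic1}(a) and associativity of the (single-valued on $S$, but in general multivalued) product; here one uses that $y, p, u$ are genuine elements so these products are singletons up to the superring multiplication being applied stepwise. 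This is the step I expect to be the main technical obstacle: in a superring the product is multivalued, so ``scaling an inclusion by an element of $S$'' must be justified via M1--M4 and the semidistributive law rather than by a one-line ring computation; I would isolate the needed fact (if $d \in a+b$ then $ds \subseteq as + bs$ for any $s$, and $(aS)s \subseteq aS$) as an explicit appeal to Definition \ref{def:sup}(iv) and the multiplicativity of $S$.

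For the converse of (i): suppose there is $s \in S$ with $cs \subseteq aS + bS$. Pick any $c'' \in cs$; then $c'' \in a\sigma + b\tau$ for some $\sigma, \tau \in S$. Set $a' := a\sigma$, $b' := b\tau$, $c' := c''$. Then $c' \in a' + b'$ by construction; $a' = a\sigma \sim a$ since $a'\cdot 1 = a \cdot \sigma$ with $1, \sigma \in S$ — wait, more precisely $a\sigma$ is a set, so I pick $a' \in a\sigma$ and note $a' \cdot \{1\} \subseteq a \cdot \{\sigma\}$, hence $a' \sim a$ using characterization (b) (there are $1, \sigma \in S$ with $a'\cdot 1 \cap a \cdot \sigma \neq \emptyset$); similarly $b' \sim b$; and $c' \in cs$ gives $c' \cdot 1 \cap c \cdot s \neq \emptyset$ so $c' \sim c$. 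This exhibits the required witnesses, so $[c] \in [a]+[b]$. Finally, item (ii) follows by the identical argument with ``$a'+b'$'' replaced by ``$a' \cdot b'$'' and ``$aS + bS$'' replaced by ``$abS$'', the only change being that the distributive scaling step is replaced by associativity and commutativity of the multivalued product (Lemma \ref{lembasic1}(a)); I would simply write ``the proof is \emph{mutatis mutandis} the same as (i)''.
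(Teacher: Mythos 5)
Your proposal is correct and takes essentially the same approach as the paper: scale the membership $c' \in a' + b'$ by a suitable product of $S$-elements and use the $\sim$-witnesses to convert $a',b',c'$ into $a,b,c$, then take any $s$ in that product for the forward direction; for the converse, pick $c'' \in cs \subseteq aS+bS$ and unpack. The only cosmetic slip is citing the $\sim$-characterization as ``$a'p = aq$ for single elements $p,q \in S$'' — the previous lemma gives subsets $P,Q \subseteq S$ (or the intersection condition $a's \cap at \neq \emptyset$), not singleton equalities — but your argument goes through verbatim with subsets, so this is a notational imprecision rather than a gap.
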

\begin{proof}
 we only need to deal with the case $S$ nontrivial.
 \begin{enumerate}[i -]
    \item $(\Rightarrow)$: Let $c',a',b' \in A$ such that $c' \in a' + b'$ and $c' \sim c, a' \sim a, b' \sim b$. Then 
    $$c'X' = cX, a'Y' = aY, b'Z' = bZ\mbox{ for some }X,Y,Z,X',Y',Z' \subseteq S$$ 
    and so
    $$c(XY'Z') = c'(X'Y'Z') \in a'(X'Y'Z') + b'(X'Y'Z') = a(X'YZ') + b(X'Y'Z) \subseteq aS + bS.$$
    Therefore, for any $s \in XY'Z' \subseteq S$, we have $cs \subseteq aS + bS$.\\
    $(\Leftarrow)$: By hypothesis, there is $c' \in cs \cap at + bv$ for some $t,v \in S$. Therefore there exists $a' \in at$ and $b' \in bv$ with $c' \in a' + b'$. Lastly, Marshall coherence implies that $c' \sim c, a' \sim a$ and $b' \sim b$.
    
    \item $(\Rightarrow)$: Let $c',a',b' \in A$ such that $c' \in a' \cdot b'$ and $c' \sim c, a' \sim a, b' \sim b$. Then 
    $$c'X' = cX, a'Y' = aY, b'Z' = bZ\mbox{ for some }X,Y,Z,X',Y',Z' \subseteq S$$
    and so
    $$c(XY'Z') = c'(X'Y'Z') \in a'(X'Y'Z')b'(X'Y'Z') = a(X'YZ')b(X'Y'Z) \subseteq bcS.$$ Therefore, for any $s \in XY'Z' \subseteq S$, we have $cs \subseteq abS$.\\
 \end{enumerate}
\end{proof}

\begin{teo} \label{marshallquo-teo}
Let $A$ be a superring and $S\subseteq A$ a Marshall coherent subset. 

\begin{enumerate}[i -]
    \item The structure $(A/_mS,+,\cdot,-,[0],[1])$ is a superring.
    
    \item The projection map $\pi \colon A \to A /_m S$ is a universal morphism satisfying $\pi(S) = \{1\}$, that is, given a morphism $f \colon A \to B$ with $f(S) = \{1\}$, there is an unique morphism $\overline{f} \colon A/_m S \to B$ such that $f = \overline{f} \circ \pi$. In other words, for every morphism 
$f:A\rightarrow B$ such that $f[S]=\{1\}$, there exist a unique morphism $\tilde f:A/_mS\rightarrow B$ such 
that the following diagram commute:
$$\xymatrix{A\ar[r]^{\pi}\ar[dr]_{f} & A/_mS\ar[d]^{!\tilde f} \\ & B}$$
where $\pi:A\rightarrow A/_mS$ is the canonical projection $\pi(a)=\overline a$.
\end{enumerate}
\end{teo}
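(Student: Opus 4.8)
The plan is to verify the two parts in order, leaning heavily on Lemma \ref{lemsum1} to replace the ``three-witness'' definition of the operations by the cleaner criterion ``$cs \subseteq aS+bS$ for some $s\in S$'', which is far more tractable.

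For part (i), I would check the superring axioms of Definition \ref{def:sup} one at a time on $(A/_mS,+,\cdot,-,[0],[1])$. First, $+$ and $\cdot$ are well defined on equivalence classes: this is essentially immediate from the definition, since the operations are phrased in terms of $\sim$-witnesses, but I would spell it out once using Lemma \ref{lemsum1}(i),(ii). The commutative multigroup axioms M1--M4 for $(A/_mS,+,-,[0])$ transfer from $A$: for M2 one checks $[a]\in[b]+[0]$ iff $[a]=[b]$, using that $bs\subseteq aS+0\cdot S=aS$ forces $a\sim b$ by Marshall's coherence; M1 and M4 follow by applying the corresponding property of $A$ to chosen representatives and pushing through $\pi$; M3 (associativity of the multisum) is the most laborious — here I would take witnesses in $A$, use the criterion of Lemma \ref{lemsum1}(i) to clear denominators into a common $s\in S$, invoke M3 in $A$, and translate back. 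The multimonoid axioms for $(A/_mS,\cdot,[1])$ are handled the same way, and $[0]$ absorbing is clear since $0\cdot a=\{0\}$ in $A$. The weak distributive law $[d]\in[c]([a]+[b])\Rightarrow[d]\in[c][a]+[c][b]$ is again a denominator-clearing argument: from $[d]\in[c]([a]+[b])$ get, via Lemma \ref{lemsum1}, some $s$ with $ds\subseteq c(aS+bS)S\subseteq (ca+cb)S$ using the weak distributivity in $A$, which by Lemma \ref{lemsum1} again says $[d]\in[c][a]+[c][b]$. The rule of signs transfers directly from $A$ since $[-a]:=-[a]$ was defined that way and $-(ab)=(-a)b$ holds in $A$. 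If $A$ is full, the stronger law is proved by the same computation with equalities replacing inclusions.

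For part (ii), that $\pi$ is a morphism is immediate from Definition \ref{def:sup}'s morphism clauses and the way $+,\cdot$ on $A/_mS$ were defined (each clause of the form ``$c\in a+b\Rightarrow \pi(c)\in\pi(a)+\pi(b)$'' holds by taking the witnesses $c'=c,a'=a,b'=b$). That $\pi(S)=\{1\}$ follows because for $s\in S$ we have $s\cdot\{1\}=\{s\}=1\cdot\{s\}$ with $\{1\},\{s\}\subseteq S$, so $s\sim 1$, i.e. $[s]=[1]$. For the universal property, given $f\colon A\to B$ with $f[S]=\{1\}$, I would define $\tilde f([a]):=f(a)$ and check (a) well-definedness: if $a\sim b$ then $aX=bY$ for $X,Y\subseteq S$, so $f(a)=f(a)\cdot 1\in f(aX)=f(bY)=f(b)$ (using $f[S]=\{1\}$ and that $f$ sends the equal sets $aX,bY$ to equal sets), hence $f(a)=f(b)$ — here one should be careful that from $aX=bY$ as subsets of $A$ and $f$ merely a (non-full) morphism one only gets $f(a)\in f(b)\cdot f(X)=\{f(b)\}$ after using $f[S]=\{1\}$, which is exactly the point where the hypothesis $f[S]=\{1\}$ is used; (b) $\tilde f$ is a morphism: each clause, e.g. $[c]\in[a]+[b]\Rightarrow \tilde f([c])\in\tilde f([a])+\tilde f([b])$, unwinds to witnesses $c'\in a'+b'$ in $A$ with $c'\sim c$ etc., whence $f(c)=f(c')\in f(a')+f(b')=f(a)+f(b)$; (c) $f=\tilde f\circ\pi$ by construction; (d) uniqueness: any $g$ with $g\circ\pi=f$ must satisfy $g([a])=g(\pi(a))=f(a)=\tilde f([a])$.

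The main obstacle I expect is the verification of the multigroup associativity axiom M3 and the weak distributive law for $A/_mS$: in both cases the definition of the operations quantifies over unnamed representatives, so the real content is the bookkeeping of pulling several witnesses back to $A$, multiplying all the auxiliary subsets of $S$ together into a single common ``denominator'' $s\in S$ (legitimate since $S$ is multiplicatively closed and Marshall's coherence lets us convert memberships $x\in as$ into set-equalities $xP=aQ$ with $P,Q\subseteq S$), applying the corresponding axiom in $A$, and then recognizing the result via Lemma \ref{lemsum1}. Once the ``clear to a common denominator'' technique is set up, every remaining axiom is a mechanical instance of it, so I would prove one representative case (say M3) in full and indicate that the others are entirely analogous.
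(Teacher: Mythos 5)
Your plan mirrors the paper's own proof almost exactly: part (i) is verified axiom by axiom using Lemma \ref{lemsum1} to reduce membership in $[a]+[b]$ and $[a][b]$ to a single ``common denominator'' $s\in S$, with M3 and weak distributivity handled by clearing denominators and invoking Marshall's coherence, and part (ii) defines $\tilde f([a]):=f(a)$, checks well-definedness via $aX=bY$ and $f[S]=\{1\}$, and then reads off morphism and uniqueness properties. Your caveat about the non-full morphism in the well-definedness check is exactly the right point of care (the clean version: take $z\in aX=bY$, so $f(z)\in f(a)f(x)=\{f(a)\}$ and $f(z)\in\{f(b)\}$, hence $f(a)=f(b)$), and is how the paper resolves it as well.
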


\begin{proof}
we only need to deal with the case $S$ nontrivial.
\begin{enumerate}[i -]
\item Firstly, we prove that $A/_m S$ is a superring.
\begin{itemize}
    \item $(A/_mS, +, -, [0])$ is a multigroup.\\
    The commutativity of $+$ is straightforward. Let $a,b,c \in A$.
    \begin{itemize}
        \item $[c] \in [a] + [b] \Rightarrow -[a] \in -[b] + [c]$.\\
        Let $c',a',b' \in A$ with $c' \in a' + b'$ and $c' \sim c, a' \sim a, b' \sim b$. Then $-a' \in -c' + b'$ and so $-[a] \in -[b] + [c]$.
        
        \item $[a] + [0] = \{[a]\}$.\\
        Let $[x] \in [a] + [0]$. Then there is $s \in S$ such that $xs \subseteq aS$. So, by Marshall coherence, $[x] = [a]$.
        
        \item $([a] + [b]) + [c] \subseteq [a] + ([b] + [c])$.\\
        Let $[x] \in [y] + [c]$, with $[y] \in [a] + [b]$. Then there are $s,t \in S$ such that $xs \subseteq yS + cS, yt \subseteq aS + bS$. Thus
        $$xst \subseteq (aS + bS) + cS \subseteq aS + (bS + cS).$$ It follows by Marshall coherence that $[x] \in [a] + [l], [l] \in [b] + [c].$
    \end{itemize}
    \item $(A/_mS, \cdot,[1])$ is a multimonoid.\\
    The comutativity of $\cdot$ is straightforward too. So let $a,b,c \in A$.
    \begin{itemize}
        \item $([a]\cdot [b]) \cdot [c] \subseteq [a]\cdot ([b] \cdot [c])$.\\
        Let $[x] \in [y]\cdot[c]$ with $[y] \in [a][b]$. Then there are $s,t \in S$ such that $xs \subseteq ycS, yt \subseteq abS$. Thus $$xst \subseteq (ab)cS \subseteq a(bc)S$$ and by Marshall coherence $[x] \in [a][l], [l] \in [b][c]$.
        \item $[a] \cdot[1] = \{[a]\}$.\\
        Let $[x] \in [a] \cdot [1]$. Then there is $s \in S$ with $xs \subseteq aS$. By Marshall coherence $[x] = [a]$.
    \end{itemize}
\end{itemize}

The verification of axioms $iii, iv$ and $v$ of Definition \ref{def:sup} are straightforward.

\item It is follows imediately from Marshall quotient definition that $\pi \colon A \to A/_m S$ is a morphism satisfying $\pi(S) = \{1\}$. Now let $f \colon A \to B$ be a morphism with $f(S) = \{1\}$. Note that if $a \sim b$, then $as = bt$ for some $s,t \in S$ and so for any $x \in as \cap bt$ we have $f(x) \in f(as) \cap f(bt) \subseteq \{f(a)\} \cap \{f(b)\}$. Thus $f(a) = f(b)$. Then we can define $\overline{f} \colon A/_m S \to B$ by $\overline{f}([a]) = f(a)$. Since the multioperations are define by congruency relations, we have that $\overline{f}$ is a superring morphism. 
\end{enumerate}
\end{proof}

As an immediate consequence, note that if $S, T$ are Marshall coherent subsets of $A$ and $S \subseteq T$, then we have a canonical surjective morphism of superrings:
$$A/_m S \twoheadrightarrow A/_m T.$$

\begin{cor}
Let $A$ be a superring and $S\subseteq A$ be a non-trivial Marshall coherent subset of $A$.
\begin{enumerate}[i -]
    \item If $A$ is full then $A/_mS$ is full.
    \item If $A$ is a superdomain then $A/_mS$ is a superdomain.
    \item If $A$ is a superfield then $A/_mS$ is a superfield.
\end{enumerate}
\end{cor}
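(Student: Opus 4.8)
The strategy is to reduce each claim to the universal property of the Marshall quotient (Theorem \ref{marshallquo-teo}(ii)) together with the explicit computations in Lemma \ref{lemsum1}, handling the three items in increasing order of difficulty. Throughout, $\pi \colon A \to A/_m S$ denotes the canonical projection, and I will use freely that $\pi(s) = 1$ for every $s \in S$ and that, by Lemma \ref{lemsum1}, $[c] \in [a]+[b]$ iff $cs \subseteq aS + bS$ for some $s \in S$, and $[c] \in [a][b]$ iff $cs \subseteq abS$ for some $s \in S$.

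\textbf{Item (i): if $A$ is full, then $A/_mS$ is full.} Here I must show $[d] \in [c]([a]+[b])$ iff $[d] \in [c][a] + [c][b]$; the left-to-right inclusion holds in any superring, so only the converse needs work. Suppose $[d] \in [c][a] + [c][b]$. By Lemma \ref{lemsum1}(i) there is $s \in S$ with $ds \subseteq [c][a]\text{-representatives} \cdot S + [c][b]\text{-representatives} \cdot S$; unwinding with Lemma \ref{lemsum1}(ii), after absorbing finitely many elements of $S$ into a single $t \in S$ (using multiplicativity of $S$), I get $dt \subseteq (caS) + (cbS) = c(aS) + c(bS)$. Since $A$ is full, $c(aS) + c(bS) = c(aS + bS)$, hence $dt \subseteq c(aS+bS)$, and enlarging $t$ once more this says $dt' \subseteq c(a+b)S$ for some $t' \in S$; applying Lemma \ref{lemsum1}(ii) with the element $c(a+b)$ (i.e. any representative in $c$ times a representative in $a+b$) gives $[d] \in [c]([a]+[b])$. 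The one point requiring care is the bookkeeping of the several $S$-sets produced by Lemma \ref{lemsum1}, but since $S \cdot S \subseteq S$ they can always be consolidated.

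\textbf{Item (ii): if $A$ is a superdomain, then $A/_mS$ is a superdomain.} I must show $[0] \in [a][b]$ implies $[a] = [0]$ or $[b] = [0]$. By Lemma \ref{lemsum1}(ii), $[0] \in [a][b]$ means $0 \cdot s \subseteq abS$ for some $s \in S$, i.e. $0 \in ab s'$ for some $s' \in S$, that is $0 \in (abs')$. Now $s' \neq 0$ since $0 \notin S$ (nontriviality), and $A$ is a superdomain, so from $0 \in a\cdot(bs')$ we get $a = 0$ or $bs' = 0$; in the latter case, again by the superdomain property and $s' \neq 0$, $b = 0$. Either way $[a] = [0]$ or $[b] = [0]$. (One should also remark $A/_mS$ is nontrivial, i.e. $[0] \neq [1]$: if $[0] = [1]$ then $0 \cdot X = 1 \cdot Y$ for nonempty $X, Y \subseteq S$, forcing $0 \in Y \subseteq S$, contradicting nontriviality.)

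\textbf{Item (iii): if $A$ is a superfield, then $A/_mS$ is a superfield.} A superfield is a superdomain that is also a quasi-superfield, so by (ii) it remains to check every nonzero class is invertible. Let $[a] \neq [0]$; then $a \neq 0$ (else $[a] = [0]$), so $a$ has an inverse $a^{-1} \in A$ with $1 \in a a^{-1}$. Applying $\pi$ and using that $\pi$ is a morphism, $1 = \pi(1) \in \pi(a)\pi(a^{-1}) = [a][a^{-1}]$, so $[a^{-1}]$ is an inverse of $[a]$ in $A/_mS$. This completes the proof. The only genuinely delicate item is (i), and the anticipated obstacle there is purely notational — tracking and merging the auxiliary subsets of $S$ — rather than conceptual, since fullness of $A$ does all the real work.
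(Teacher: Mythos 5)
Your proposal is correct and follows the route the paper leaves implicit: the corollary is stated without proof, and the intended derivation is exactly yours, namely unwinding the definitions via Lemma \ref{lemsum1}, consolidating the auxiliary subsets of $S$ using $S\cdot S\subseteq S$, and invoking nontriviality ($0\notin S$) for (ii) and the morphism property of $\pi$ for (iii). The only step in (i) that needs tightening is the final one: from $dt\subseteq c(aS)+c(bS)=c(aS+bS)$ you cannot literally pass to ``$dt'\subseteq c(a+b)S$''; instead pick a single $d'\in dt$, so $d'\in c\cdot e$ for some $e\in as_1+bs_2$ with $s_1,s_2\in S$, note that $[e]\in[a]+[b]$ by Lemma \ref{lemsum1}(i) (taking $s=1$) and $[d']=[d]$ by Marshall coherence, whence $[d]\in[c][e]\subseteq[c]([a]+[b])$. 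This is a purely notational repair and does not affect the validity of the argument.
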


\begin{teo}\label{qext1}
Let $A$ be a superdomain and $S\subseteq A\setminus\{0\}$ such that $1\in S$, $0\notin S$, $S\cdot S\subseteq S$ and $A^2\setminus\{0\}\subseteq S$. Then $S$ is a Marshall coherent subset of $S$. Moreover $A/_mS$ is a hyperfield, i.e, for all $[a],[b]\in A/_mS$, $[a][b]$ is a singleton  set.
\end{teo}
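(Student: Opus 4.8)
The plan is to verify the two claims in turn: first that $S$ is Marshall coherent, and then that every product $[a][b]$ in $A/_mS$ is a singleton, which by Lemma~\ref{lem1}(iii) and the preceding Corollary will also give that $A/_mS$ is a hyperfield (a superfield with singleton products). For Marshall coherence, recall we must show that whenever $x\in as$ for some $s\in S$, there are $P,Q\subseteq S$ with $xP=aQ$. The natural choice is $P=\{s^2\}$ and $Q=\{sx\}$ — wait, these need to lie in $S$: we have $s\in S$ so $s^2\in S$, and we need $sx\in S$. Here is where the hypothesis $A^2\setminus\{0\}\subseteq S$ enters: since $x\in as$, we get $sx\in as^2=(as)s\ni$ products, but more directly $x\cdot x\in(as)(as)=a^2s^2$, so we should aim to produce squares. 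Concretely, from $x\in as$ one deduces $x^2\in a^2 s^2$; since $A$ is a superdomain and (if $x\neq 0$) $a,s\neq 0$, we have $x^2\in A^2\setminus\{0\}\subseteq S$. Then taking $P=\{x\}$ and $Q=\{s\}$ is wrong since $x$ need not be in $S$; instead take $P=\{sx\}\subseteq S$ (as $sx\in x+x$-type products lie in $A^2\setminus\{0\}$, being $s\cdot x$ with... no, $sx$ is not a square). The cleanest route: from $x\in as$, multiply by $x$: $x^2\in asx$, and multiply $x\in as$ by $s$: $sx\in as^2$. Set $P=\{x^2\}\subseteq S$ and $Q=\{asx\}$... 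I would instead argue $xP=aQ$ with $P=\{s\}$, $Q=\{x\}$ fails membership, so the right pair is $P=\{x s\}$, $Q=\{s^2\}$: then $x\cdot xs = x^2 s$ and $a\cdot s^2$, and $x^2\in a^2s^2$ gives $x^2 s\subseteq a^2 s^3$ while $as^2$ — these must be compared carefully. The honest statement is: $xs\in as^2$ already exhibits $x\in a\cdot(s^2/x)$-flavored identities; I will set $P=\{xs\}$ and $Q=\{x^2\}$, both in $S$ once we check $xs\in A^2\setminus\{0\}$, which needs $xs=$ a square — false in general. So the correct and simplest choice is $P=\{x\}\cdot\{?\}$; the key realization is that $x^2\in S$ lets us write $x\cdot x^2=x^3$ and compare with $a\cdot(sx^2)$ using $x\in as$: indeed $x^3\in x^2\cdot as = a\cdot(sx^2)$ and $sx^2\in S$ since $s\in S$, $x^2\in S$. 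Hence $P=\{x^2\}$, $Q=\{sx^2\}$ works: $xP=\{x^3\}\subseteq a(sx^2)\cap\ldots$ — and conversely $aQ=a s x^2\ni x\cdot x^2$ from $x\in as$. This gives $xP\subseteq aQ$ and $aQ\subseteq xP$ by the same computation, so $xP=aQ$; the case $x=0$ forces $a=0$ (superdomain, $s\neq 0$) and is trivial.

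For the second part — singleton products — let $[a],[b]\in A/_mS$ with $a,b\neq 0$ (the zero case is immediate). Suppose $[c_1],[c_2]\in[a][b]$; by Lemma~\ref{lemsum1}(ii) there are $s_1,s_2\in S$ with $c_1 s_1\subseteq abS$ and $c_2 s_2\subseteq abS$. Pick $d_1\in c_1 s_1\cap ab t_1$ and $d_2\in c_2 s_2\cap ab t_2$ for suitable $t_1,t_2\in S$. I want to show $[c_1]=[c_2]$, i.e. $c_1 X=c_2 Y$ for some $X,Y\subseteq S$. The idea: $d_1\in ab t_1$ and $d_2\in ab t_2$, and since $A$ is a superdomain with $a,b,t_i\neq 0$, both $d_1,d_2$ are nonzero. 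Now $d_1 t_2\in ab t_1 t_2$ and $d_2 t_1\in ab t_1 t_2$, so $d_1 t_2$ and $d_2 t_1$ both lie in the set $ab(t_1t_2)$, which — and this is the crux — is a singleton, because $ab\in A^2$-times-$A^2$... no: $ab$ itself need not be a square. The genuine engine must be: in a superdomain, if $u,v\in A\setminus\{0\}$ then $uv$ is cancellative, i.e. $uw_1\cap uw_2\neq\emptyset\Rightarrow w_1=w_2$ — but that is Lemma~\ref{lembasic1}(b) and requires $A$ \emph{full}, which is not assumed. So the real mechanism is different: since $A^2\setminus\{0\}\subseteq S$, for any nonzero $u$ we have $u^2\in S$, hence $[u]$ is invertible in $A/_mS$ with $[u][u]=[u^2]=[1]$ (as $u^2\in S$ and $\pi(S)=\{1\}$), so $[u]^{-1}=[u]$ and $A/_mS$ has the property that every nonzero element squares to $1$.

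With that property in hand, the singleton claim becomes formal: in any superfield $G$ in which $g^2=\{1\}$ for all $g\in\dot G$, products of nonzero elements are singletons. Indeed if $e_1,e_2\in[a][b]$ with everything nonzero, then $e_1\cdot e_2^{-1}\subseteq ([a][b])([a][b])^{-1}=[a][b][b]^{-1}[a]^{-1}=[a][a]^{-1}=\{[1]\}$ using commutativity, associativity of the multimonoid, and $[b][b]^{-1}=\{[1]\}=[a][a]^{-1}$ — this is exactly the computation in the displayed Fact about SIP multigroups earlier in the excerpt. Hence $e_1 e_2^{-1}=\{[1]\}$, forcing $e_1=e_2$ and $[a][b]$ a singleton. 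The main obstacle, and the step I would be most careful about, is the Marshall-coherence witness computation: getting an explicit pair $(P,Q)\subseteq S\times S$ with $xP=aQ$ as honest \emph{equalities} of subsets (not just inclusions) from $x\in as$, which forces one to chase the multigroup axiom M1 to rewrite $x\in as$ as $a\in x s^{-1}$-type relations and then square to land inside $A^2\setminus\{0\}\subseteq S$; once coherence is established and the observation $[u^2]=[1]$ is made, everything else is a direct application of results already proved (Lemma~\ref{lemsum1}, the earlier SIP-style cancellation, and the Corollary giving that $A/_mS$ is a superfield).
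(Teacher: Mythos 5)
Your argument for the second half (singleton products) is essentially sound and takes a genuinely different route from the paper: you first observe that $[u][u]=\{[1]\}$ for every nonzero $u$ (since $u^2\subseteq A^2\setminus\{0\}\subseteq S$ and, by Lemma \ref{lemsum1}, anything in $[u][u]$ is $\sim 1$), and then run the SIP-multigroup cancellation $[c][d]\subseteq([a][b])([a][b])=([a][a])([b][b])=\{[1]\}$ to force $[c]=[d]$. The paper instead stays downstairs in $A$ and exhibits explicit witnesses for $c\sim d$: from $cs_1\subseteq abS$ and $ds_2\subseteq abS$ it forms $c(cds_1s_2)=d(c^2s_1s_2)$ with both $cds_1s_2$ and $c^2s_1s_2$ inside $S$. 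Your route is cleaner conceptually but depends on the quotient already being a superring (Theorem \ref{marshallquo-teo}), hence on coherence being settled first; the paper's route is self-contained. Either is acceptable once coherence is in hand.

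The genuine gap is precisely in the coherence step, and you flagged it yourself without closing it. Your final witnesses $P=\{x^2\}$, $Q=\{sx^2\}$ give $xP=x\cdot x^2=x^3\subseteq (as)\cdot x^2=aQ$ using $x\in as$, but the reverse inclusion does \emph{not} follow ``by the same computation'': $(as)x^2$ can be strictly larger than $x\cdot x^2$ whenever $as$ is not a singleton, and nothing in a (non-full) superdomain lets you cancel. The definition of Marshall coherence demands an exact equality of subsets, and the way to get one is to make \emph{both} sides literally the same associated product of elements of $A$. Take $P=xa$ and $Q=x^2$: since $x\ne 0$ forces $a,s\ne 0$, we have $xa\subseteq (as)a=a^2s\subseteq S\cdot S\subseteq S$ and $x^2\subseteq A^2\setminus\{0\}\subseteq S$, and then
\[
xP=x(xa)=(x\cdot x)\cdot a=a\cdot x^2=aQ
\]
is an honest equality by generalized associativity/commutativity of the multiplicative multimonoid (Lemma \ref{lembasic1}(a)) — no inclusion from $x\in as$ is needed at all. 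Your insistence on singleton witnesses is what blocks you; the definition allows arbitrary subsets $P,Q\subseteq S$, and using the whole set $xa$ is the key move.
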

\begin{proof}
Let $a\in A$, $s\in S$ and $x\in as$. Suppose without loss of generality that $x\ne0$ (because if $x=0$ then $a=0$ because $A$ is a superdomain). Then $xa\subseteq a^2s\subseteq S$ and since
$$x(xa)=ax^2\mbox{ with }xa\mbox{ and }x^2\mbox{ contained in }S,$$
we have that $S$ is a Marshall coherent subset. Now let $[c],[d]\in[a].[b] \neq \emptyset$. Then $cs_1\subseteq abS$ and $ds_2\subseteq abS$ for some $s_1,s_2\in S$ (see Lemma \ref{lemsum1}). 

If $c=0$ or $d=0$ then $0\in abS$ which imply $a=0$ or $b=0$ and $[a][b]=\{[0]\}$. Let $c,d\ne0$. Then
$$cds_1s_2\subseteq a^2b^2S\cdot S\subseteq S.$$
Using this fact we get
$$c(cds_1s_2)=d(c^2s_1s_2)\mbox{ with }cds_1s_2\mbox{ and }c^2s_1s_2\mbox{ contained in }S.$$
Moreover $c\sim d$, thus $[a].[b]$ is a singleton.

We already know that $A/_mS$ is a superdomain. To show that $A/_mS$ is a superfield, it suffices to note that for each $a \in A$ such that $[a] \neq [0]$, $[1] \in [a].[a]$, or, equivalently, there is $s \in S$ such that $1s \subseteq aS.aS$, but $a^2 \in S$ and $1.a^2 \subseteq (a.1).(a.1)$. Thus $A/_mS$ is superfield with single-valued products, i.e., $A/_mS$ is an hyperfield.
\end{proof}

From Theorem \ref{qext1}, we have many examples of Marshall coherent sets for superdomains $A$ (of course, after removing zero):
\begin{itemize}
    \item the squares
    $$A^2:=\bigcup_{a\in A}a^2;$$
    \item the sum of squares
    $$\sum A^2:=\bigcup_{a_1,...,a_n\in A,\,n\in\mathbb N}a_1^2+a_2^2+...+a_n^2;$$
    \item preorderings, that are subsets $T\subseteq A$ with $T+T\subseteq T$, $T\cdot T\subseteq T$ and $A^2\subseteq T$.
\end{itemize}

\section{The Main Theorem: General Arason-Pfister Hauptsatz and  consequences} \label{Hauptsatz-section}

Throughout this Section we establish the following notation: Let $G$ be a special group and $F$ its special hyperfield associated. In particular, if $\alpha\in F$, $\alpha\ne0,1$, the polynomial $p(X)\in F[X]$, $p(X)=X^2-\alpha$ has no roots in $F$ (basically because $\alpha^2=1$ for all $\alpha\in F^*$).

Then, $F(p)\cong F[X]/\langle X^2-\alpha\rangle$ is a hyperfield, and we denote its elements by $[f(X)]$, $f(X)\in F[X]$. Note that $F(p)$ is generated by $[X]$, and denoting $\omega=[X]$, we have $0\in\omega^2-\alpha$ and
$$F(p)=\{[a]+[b]\omega:a,b\in F\}.$$

We Define the following:
\begin{defn}\label{sgalgext}
    Let $G$ be a special group, $F$ its special hyperfield associated and $\alpha\in F$ with $\alpha\ne0,1$. We denote $\omega=[X]\in F[X]/\langle X^2-\alpha\rangle$ and define
    \begin{align*}
        F(\omega)&:=F[X]/\langle X^2-\alpha\rangle;\\
        S_F(\omega)&=(F(\omega))/_m(F(\omega)^2\setminus\{0\});\\
        S^{red}_F(\omega)&=(F(\omega))/_m\left(\sum F(\omega)^2\setminus\{0\}\right).
    \end{align*}
    We also write $\omega=\sqrt\alpha$.
\end{defn} 

\begin{prop}\label{prop1}
 Let $F$ be a formally real special hyperfield and $S_F(\omega)$ as above.
 \begin{enumerate}[a -]
  \item Let $a_1,a_2,...,a_n,b_1,b_2,...,b_n\in F$. Then
 $$(a_1+b_1\omega)^2+(a_2+b_2\omega)^2+...+(a_n+b_n\omega)^2\subseteq(1+\alpha)+2[a_1b_1+a_2b_2+...+a_nb_n]\omega,$$
 where $2X:=X+X$.
  \item $\sum F(\omega)^2\setminus\{0\}=(1+\alpha)+F\cdot\omega$.
  \item Denote $(2)F=\bigcup\{x+x:x\in F\}$. Then
  $$F(\omega)^2=(1+\alpha)+(2)F\cdot\omega.$$
  \item $F(\omega)^2=\sum F(\omega)^2$ iff $(2)F=F$.
  \item $-1\notin\sum F(\omega)^2$ iff $-1\notin1+\alpha$. 
  \item $\omega\notin F(\omega)^2$.
  \item The morphism $F\rightarrow S_F(\omega)$ is full and not injective.
  \item If $-1\notin1+\alpha$ and $a,b\in a+b$ for all $a,b\in \dot F$ then $S_F(\omega)$ is a real reduced hyperfield (and then, a reduced special group).
 \end{enumerate}
\end{prop}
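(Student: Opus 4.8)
The plan is to work through parts (a)--(h) in order, since each part essentially feeds the next. For part (a), I would expand each square $(a_i+b_i\omega)^2$ using the rule-of-signals and the fact that $\omega^2=\alpha$, noting that in the special hyperfield $F$ we have $a_i^2=1=b_i^2$ for all nonzero entries (and the degenerate cases where some entry is $0$ are handled separately, giving a subset of the claimed expression); the cross term contributes $2a_ib_i\omega$ where $2X=X+X$. Summing and using Lemma \ref{lembasic1}(c)/(d) and the distributivity facts from Lemma \ref{lemperm}(i) to regroup the $\omega$-free and $\omega$-linear parts yields the containment in $(1+\alpha)+2[a_1b_1+\cdots+a_nb_n]\omega$; here one must be careful that the containment can be strict because $F$ is only a hyperfield, not a field. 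For part (b), the inclusion $\subseteq$ is immediate from (a) since $2[\,\cdots]\subseteq F$ and the set on the right is closed under the relevant sums (using that $1+\alpha$ is a fixed subset and $F\cdot\omega+F\cdot\omega\subseteq F\cdot\omega$ because $F$ is special, so $a+b$ lands in $\dot F\cup\{0\}$); the reverse inclusion comes from exhibiting each element of $(1+\alpha)+c\omega$ as a sum of two squares, e.g. $1+\alpha\ni$ the value from $1^2+\omega^2$ adjusted, plus a square contributing the $c\omega$ term. Part (c) is the analogous but tighter computation using $n=1$ in (a): a single square $(a+b\omega)^2$ lies in $(1+\alpha)+2ab\,\omega$, and as $a,b$ range over $\dot F$ the product $2ab$ ranges over $(2)F$ after scaling (since $ab\in\dot F$ is arbitrary), giving $F(\omega)^2=(1+\alpha)+(2)F\cdot\omega$.

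Parts (d), (e), (f) are then short consequences: (d) follows by comparing the descriptions in (b) and (c) — they agree exactly when $(2)F\cdot\omega = F\cdot\omega$, i.e. $(2)F=F$ (using $\omega\ne 0$ and that multiplication by $\omega$ is injective on $F$). For (e), observe $-1\in\sum F(\omega)^2 = (1+\alpha)+F\omega$ forces the $\omega$-component to vanish, i.e. $0\in F\omega$ contributing, hence $-1\in 1+\alpha$; conversely if $-1\in1+\alpha$ then picking the coefficient $0$ for $\omega$ gives $-1\in\sum F(\omega)^2$. Part (f): if $\omega\in F(\omega)^2=(1+\alpha)+(2)F\cdot\omega$, matching components would force $0\in 1+\alpha$ and $1\in (2)F$; the first gives $\alpha=-1$... actually more carefully, $0\in 1+\alpha$ means $\alpha=-1$, but then $\omega^2=-1$, and one derives a contradiction with $F$ being formally real (real special groups have $-1$ not a square, and $\sqrt{-1}$ cannot be adjoined while staying formally real) — this is the point where formal reality of $F$ is genuinely used.

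The main work, and the step I expect to be the principal obstacle, is part (h): showing $S_F(\omega)$ is a real reduced hyperfield under the hypotheses $-1\notin 1+\alpha$ and $a,b\in a+b$ for all $a,b\in\dot F$. I would proceed by verifying the axioms of a real reduced multifield (equivalently, a reduced special group): one needs that $S_F(\omega)$ is of exponent $2$ (every nonzero element squares to $1$ — this should follow because we quotiented by all nonzero squares, so $[c]^2=[c^2]=[1]$), that $-1$ is not a sum of squares (this is where (e) and the hypothesis $-1\notin1+\alpha$ enter: $\sum S_F(\omega)^2$ collapses appropriately and $-1\notin\sum F(\omega)^2$ lifts to $[-1]\notin\sum S_F(\omega)^2$ via Theorem \ref{marshallquo-teo}), and the representation/transversality axioms of reduced special groups. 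The hypothesis $a,b\in a+b$ is what guarantees the additive structure behaves "multiplicatively" enough — I would use it to show the value sets $[a]+[b]$ in the quotient are the correct two-element (or singleton) sets dictated by the reduced special group axioms, combining it with Theorem \ref{qext1} (applied to the sum-of-squares preordering) to ensure binary products are singletons. The delicate part is checking the associativity-of-linkage axiom (often called $SG6$ or the "3-transversality" condition) for reduced special groups; here I would reduce it, via the description in (b) of $\sum F(\omega)^2$, to an arithmetic statement inside $F$ itself and invoke that $F$ is already special (hence already satisfies its own transversality axioms), transporting the identity through the full surjection $F(\omega)\twoheadrightarrow S_F(\omega)$ and the fullness of $F\to S_F(\omega)$ established in (g).
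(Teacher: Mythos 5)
Your outline for (a)--(f) follows the paper's computations in spirit, but there are concrete problems. First, in (a) the regrouping of the $\omega$-free parts leaves you with the $n$-fold sum $(1+\alpha)+(1+\alpha)+\cdots+(1+\alpha)$, not with $1+\alpha$; the whole content of (a) is the collapse of this sum, which the paper obtains from the special-hyperfield identity $(1+\alpha)+(1+\alpha)=(1+\alpha)(1+\alpha)\subseteq 1+\alpha$ (multiplicativity of the Pfister form $\langle1,\alpha\rangle$), fed into an induction on $n$. Your proposal never invokes this, and without it the stated containment does not follow. Second, you never prove (g) at all --- you only cite it later as an ingredient. The paper's argument for fullness is that if some class in $[a]+[b]$ had a representative with nonzero $\omega$-coefficient after clearing squares, then $\omega$ would be a root of a degree-one polynomial over $F$, contradicting $\mbox{Irr}(F,\omega)=X^2-\alpha$; non-injectivity is just $[\alpha]=[1]$ with $\alpha\ne1$. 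Third, in (b) the reverse inclusion needs an explicit witness: the paper uses $(1-\omega)^2+(1+\omega)^2$, whose $\omega$-coefficient sweeps out $2[1-1]=F$ because $1-1=F$ in a special hyperfield; ``a square contributing the $c\omega$ term'' is not enough, since single squares only reach $(2)F\cdot\omega$ by (c).

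The more serious gap is (h). You propose to verify the reduced-special-group axioms one by one, and you yourself leave the linkage/associativity axiom as a ``delicate part'' to be ``reduced to an arithmetic statement inside $F$'' --- that step is never carried out, so as written (h) is not proved. The paper's argument is a two-line reduction that you miss: the hypothesis $a,b\in a+b$ (taken with $a=b$) gives $(2)F=F$, hence by (d) $F(\omega)^2=\sum F(\omega)^2$, hence $S_F(\omega)$ coincides with $S^{red}_F(\omega)$, the Marshall quotient of $F(\omega)$ by the preordering $\sum F(\omega)^2\setminus\{0\}$; this preordering is proper by (e) together with $-1\notin1+\alpha$, and the quotient of a superfield by a proper preordering containing all nonzero squares is a real reduced hyperfield --- in particular $1+1=\{1\}$, since every element of $1+1$ is a sum of squares and hence identified with $1$ in the quotient. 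Note also that the associativity you worry about is not something to re-derive from $F$: it is already guaranteed for the quotient by Theorem \ref{marshallquo-teo} (the quotient is a superring, so its sum is associative) combined with Theorem \ref{qext1} (it is a hyperfield).
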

\begin{proof}
$ $
\begin{enumerate}[a -]
 \item In fact, if $n=2$, then
 \begin{align*}
  (a_1+b_1\omega)^2+(a_2+b_2\omega)^2&=a_1^2+a_1b_1\omega+a_1b_1\omega+b_1^2\omega^2+
  a_2^2+a_2b_2\omega+a_2b_2\omega+b_2^2\omega^2 \\
  &=1+[a_1b_1+a_1b_1]\omega+\alpha+1+[a_2b_2+a_2b_2]\omega+\alpha \\
  &=[1+\alpha+1+\alpha]+[a_1b_1+a_1b_1+a_2b_2+a_2b_2]\omega \\
  &\subseteq(1+\alpha)+2[a_1b_1+a_2b_2]\omega.
 \end{align*}
 Here, we use the fact $1+\alpha+1+\alpha=(1+\alpha)(1+\alpha)\subseteq1+\alpha$.
 
Now, suppose true for $n$ and let $a_1,a_2,...,a_n,a_{n+1},b_1,b_2,...,b_n,b_{n+1}\in F$.
\begin{align*}
 &(a_1+b_1\omega)^2+(a_2+b_2\omega)^2+...+(a_n+b_n\omega)^2+(a_{n+1}+b_{n+1}\omega)^2 \\
 &=[(a_1+b_1\omega)^2+(a_2+b_2\omega)^2+...+(a_n+b_n\omega)^2]+(a_{n+1}+b_{n+1}\omega)^2 \\
 &=(1+\alpha)+2[a_1b_1+...+a_nb_n]\omega+1+2a_{n+1}b_{n+1}\omega+\alpha \\
 &=(1+\alpha)+2[a_1b_1+...+a_nb_n+a_{n+1}b_{n+1}]\omega,
\end{align*}
as desired.
 
 \item Using the previous item, we get
 \begin{align*}
  (1-\omega)^2+(1+\omega)^2&=(1+\alpha)+2[1-1]\omega=(1+\alpha)+F\cdot\omega.
 \end{align*}
 Moreover, $(1+\alpha)+F\cdot\omega\subseteq\sum S_F(\omega)^2\setminus\{0\}$, completing the proof.
 
 \item Let $a,b\in F$. Then
 $$(a+b\omega)^2=a^2+ab\omega+ab\omega+b^2\omega^2=(1+\alpha)+2ab\omega.$$
 Then $F(\omega)^2\subseteq(1+\alpha)+(2)F\cdot\omega$. Conversely, let $t\in(1+\alpha)+(2)F\cdot\omega$. Then 
$t\in(1+\alpha)+2a\omega$ for some $a\in F$. Since
$$(1+\alpha)+2a\omega=(1+a\omega)^2,$$
we get $t\in(1+a\omega)^2\subseteq F(\omega)^2$, completing the proof.

\item Just use items (a), (b) and (c).

\item If $-1\in F(\omega)^2$, then $-1\in x+2z\omega$ for some $x\in1+\alpha$ and $z\in F$. If $z=0$ then $-1=1+\alpha$, 
contradiction. If $z\ne0$, then
$$0\in1-1\subseteq1+x+2z\omega\subseteq1+1+\alpha+2z\omega.$$
Then $0\in y+z\omega$ for some $y\in1+1+\alpha$, and then, $0\in ev(g(X),\omega)$, for $g(X)=y+zX$, contradicting the fact 
that $f(X)=X^2-\alpha$ is the minimum degree polynomial in $F(\omega)$ from which $0\in f(\omega)$.
 
 \item Just use the same argument of item (d).
 
 \item Let $t\in a+b$ for some $t\in x+2y\omega$, with $y\ne0$. Then
 $$-b\in a-t\subseteq a-x-2y\omega,$$
 and then,
 $$0\in b-b\subseteq b+a-x-2y\omega.$$
 Therefore exists some $d\in b+a-x$ such that $0\in d-2y\omega$, which imply that $0\in ev(g(X),\omega)$ for $g(X)=d-zX$ 
for some $z\in2y$ with $z\ne0$, contradiction. This morphism is not injective because if $a\in1+\alpha$ then $[a]=[1]$ in $S_F(\omega)$.

\item If $a,b\in a+b$ for all $a,b\in\dot F$, then $(2)F=F$. Hence $F(\omega)^2=\sum F(\omega)^2$, which imply $S_F(\omega)$ real reduced ($1+1=\{1\}$).
\end{enumerate}
\end{proof}

\begin{defn}[Rooted Superfield]
A superfield $F$ is \textbf{rooted} if 
$$\{a,b\}\subseteq a+b\mbox{ for all }a,b\in F\setminus\{0\}.$$
\end{defn}

\begin{teo}\label{teo150}
Let $F$ be a formally real pre-special hyperfield and $\omega\in\overline{F}\setminus F$ be a root of $f(X)=X^2-\alpha\in F[X]$. Suppose that $-1\notin1+\alpha$. Then $S_F(\omega)$ is a formally real pre-special hyperfield. Moreover if $F$ is rooted then $S_F(\omega)=S^{red}_F(\omega)$, and in particular, $S_F(\omega)$ is a real reduced hyperfield.
\end{teo}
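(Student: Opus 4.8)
The plan is to obtain the hyperfield structure from Theorems \ref{lemquadext} and \ref{qext1}, check the remaining pre-special axioms by transport from $F$, and read off the reduced case from Proposition \ref{prop1}. First, $p(X)=X^2-\alpha$ is irreducible in $F[X]$: it has degree $2$ and no root in $F$, because $\beta^2=1$ for every $\beta\in\dot F$ while $\alpha\neq 0,1$; hence by Theorem \ref{lemquadext} the simple extension $F(\omega)=F[X]/\langle p\rangle$ is a superfield, in particular a superdomain, with $\omega=\overline X$ and $\omega^2=\alpha$. Applying Theorem \ref{qext1} to $A=F(\omega)$ and $S=F(\omega)^2\setminus\{0\}$ — the hypotheses $1\in S$, $0\notin S$, $S\cdot S\subseteq S$, $A^2\setminus\{0\}\subseteq S$ are immediate, $S$ being the square set in the list after Theorem \ref{qext1} — yields that $S_F(\omega)=F(\omega)/_mS$ is a hyperfield. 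Its nonzero part has exponent $2$: if $[c]\neq[0]$ then $c\neq 0$ in the superdomain $A$, so whenever $a+b\omega\neq0$ we get $(a+b\omega)^2\subseteq S$ with each element of $(a+b\omega)^2$ being $\sim_S$-equivalent to $1$, so $[c]^2=[1]$.

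Next, formal reality. By Proposition \ref{prop1}(e) — whose proof, like the rest of Proposition \ref{prop1}, only uses that $F$ is pre-special — the hypothesis $-1\notin 1+\alpha$ gives $-1\notin\sum F(\omega)^2$. If instead $-1\in\sum S_F(\omega)^2$, then, squares of $S_F(\omega)$ being $[0]$ or $[1]$ and $[-1]\neq[0]$, we have $[-1]\in\underbrace{[1]+\dots+[1]}_{n}$ for some $n\geq1$; iterating Lemma \ref{lemsum1}(i) and using that $S$ is multiplicative, one extracts $\sigma\in S$ with $(-1)\sigma\subseteq\underbrace{S+\dots+S}_{n}\subseteq\sum F(\omega)^2$. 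Writing $\sigma=u^2$ with $u\neq0$ and multiplying by $(u^{-1})^2$ — using $1\in u^2(u^{-1})^2$ and that $\sum F(\omega)^2$ is multiplicatively closed (again the list after Theorem \ref{qext1}) — gives $-1\in\sum F(\omega)^2$, a contradiction. So $S_F(\omega)$ is formally real; in particular $0\notin\sum_{i<n}[1]$ for every $n$, so $S_F(\omega)$ has characteristic $0$.

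It remains to verify the conditions — beyond being a hyperfield whose nonzero part has exponent $2$ and characteristic $0$ — in the definition of a pre-special hyperfield, i.e. the pre-special-group axioms for the form relation induced by the hyperaddition. These are positive assertions about $+$; Proposition \ref{prop1}(b),(c) computes the squares and sums of squares of $F(\omega)$ (namely $F(\omega)^2=(1+\alpha)+(2)F\cdot\omega$ and $\sum F(\omega)^2\setminus\{0\}=(1+\alpha)+F\cdot\omega$) purely through the hyperaddition of the pre-special hyperfield $F$, and pushing these down the quotient $\pi\colon F(\omega)\to S_F(\omega)$ together with the full morphism $F\to S_F(\omega)$ of Proposition \ref{prop1}(g), each axiom is transported from its counterpart in $F$. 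I expect this to be the main — though routine — obstacle, in particular the isometries that genuinely involve $\omega$, where the map from $F$ is not onto; it may also be deducible from a general fact that the Marshall quotient of a formally real superfield by its nonzero squares is pre-special.

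Finally, the ``moreover''. If $F$ is rooted then, taking $b=a$ in $\{a,b\}\subseteq a+b$, we get $a\in a+a$ for all $a\in\dot F$, so $(2)F=\bigcup_x(x+x)=F$; Proposition \ref{prop1}(d) then forces $F(\omega)^2=\sum F(\omega)^2$, hence the Marshall quotients defining $S_F(\omega)$ and $S^{red}_F(\omega)$ coincide and $S_F(\omega)=S^{red}_F(\omega)$. Moreover ``$F$ rooted'' is precisely the hypothesis ``$a,b\in a+b$ for all $a,b\in\dot F$'' of Proposition \ref{prop1}(h), which therefore applies and shows $S_F(\omega)$ is a real reduced hyperfield — in particular a reduced, hence pre-special, special hyperfield — completing the proof.
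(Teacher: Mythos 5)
Your argument tracks the paper's (very terse) proof closely: the paper also reduces to Theorem~\ref{qext1} for the hyperfield structure, Proposition~\ref{prop1}(e) for formal reality, and Proposition~\ref{prop1}(h) for the rooted case. Your extra steps — irreducibility of $X^2-\alpha$ via no roots plus degree 2, the exponent-$2$ calculation, and the unwinding of Lemma~\ref{lemsum1} to pull $-1\in\sum S_F(\omega)^2$ back to $-1\in\sum F(\omega)^2$ — flesh out what the paper compresses into ``we already know.''

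On the one place you flag as uncertain: the paper's proof offers no verification of the pre-special-group axioms for $S_F(\omega)$ either. The phrase ``we already know'' is carrying exactly the general fact you conjecture at the end of your third paragraph — that Marshall's quotient of a formally real hyperfield (here $F(\omega)$) by its nonzero squares yields a pre-special hyperfield. The paper takes this as established background; you correctly identify it as the needed ingredient and just don't cite it as given. Your observation that the map $F\to S_F(\omega)$ is not onto, so the isometry axioms cannot be purely transported from $F$, is a sound caution — one genuinely needs the general lemma rather than transport along a surjection. But since the target argument relies on the same implicit fact, your proposal is not missing anything that the paper's own proof supplies. The ``moreover'' is handled identically to the paper, via Proposition~\ref{prop1}(d),(h), and is correct.
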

\begin{proof}
We already know (using Theorem \ref{qext1} and item (e) of Proposition \ref{prop1}) that $S_F(\omega)$ is a formally real pre-special hyperfield.
If $F$ is rooted, then by item (h) of Proposition \ref{prop1} we have the desired.
\end{proof}

In the sequel, we want to iterate de construction $S_F(\omega)$. Let $\alpha,\beta\in \dot F\setminus\{-1\}$. The properties of Proposition \ref{prop1} are valid if we change $F$ by $S_F(\sqrt{\alpha})(\sqrt{\beta})$ (or $S_F(\sqrt{\beta})(\sqrt{\alpha})$).

\begin{teo}\label{sext1}
Let $F$ be a special hyperfield and $\alpha,\beta\in\dot F\setminus\{\pm1\}$. Then
$$S_{S_F(\sqrt{\alpha})}(\sqrt{\beta})\cong S_{S_F(\sqrt{\beta})}(\sqrt{\alpha}).$$
\end{teo}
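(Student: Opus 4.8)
The plan is to exhibit an explicit isomorphism between the two iterated Marshall quotients by tracking symbols through the chain of constructions. Write $\omega_\alpha, \omega_\beta$ for the adjoined square roots of $\alpha$ and $\beta$ respectively. Both $S_{S_F(\sqrt\alpha)}(\sqrt\beta)$ and $S_{S_F(\sqrt\beta)}(\sqrt\alpha)$ are built by: first forming a Kronecker extension $F[X]/\langle X^2-\alpha\rangle$ (resp. $F[X]/\langle X^2-\beta\rangle$), then passing to a Marshall quotient by (nonzero) squares to get a special hyperfield, then repeating with the other parameter. The natural candidate target for both is a single hyperfield $S_F(\sqrt\alpha,\sqrt\beta)$ obtained by adjoining both roots to $F$ at once, i.e. working inside $F[X,Y]/\langle X^2-\alpha, Y^2-\beta\rangle$ (or, more carefully, the two-step superfield extension $F(\sqrt\alpha)(\sqrt\beta)$ of Definition \ref{pre-ext}, using that $Y^2-\beta$ remains irreducible over $F(\sqrt\alpha)$ — which holds because $\beta\ne\pm1$ and $F(\sqrt\alpha)$ is again pre-special, so $\sqrt\beta\notin F(\sqrt\alpha)$), and then Marshall-dividing by the nonzero squares of that superfield. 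I would first establish that $F(\sqrt\alpha)(\sqrt\beta) \cong F(\sqrt\beta)(\sqrt\alpha)$ as superfields via the obvious swap $\sqrt\alpha \leftrightsquigarrow \sqrt\alpha$, $\sqrt\beta \leftrightsquigarrow \sqrt\beta$ on the basis $\{1,\sqrt\alpha,\sqrt\beta,\sqrt{\alpha\beta}\}$; this is the polynomial-ring symmetry and is formal.

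The key step is then a \emph{transitivity of Marshall quotients} argument: I claim that for the superfield $L = F(\sqrt\alpha)$ and $M = L(\sqrt\beta)$, the hyperfield $S_{S_L(\sqrt\beta)}$ — obtained by first dividing $L$ by $\dot L^2$ to get $S_F(\sqrt\alpha)$, then adjoining $\sqrt\beta$ over \emph{that} hyperfield and dividing again — is isomorphic to $M$ divided by the multiplicative set generated by all nonzero squares of $L$ together with $\sqrt\beta^2 = \beta$-combinations, i.e. to a single Marshall quotient of $M$. The tool here is the universal property in Theorem \ref{marshallquo-teo}(ii) together with the observation (from the remark after that theorem) that nested Marshall quotients by increasing Marshall-coherent sets compose to surjections, and that here they are in fact isomorphisms because the relevant sets of squares match up. Concretely: $S_F(\sqrt\alpha)(\sqrt\beta)^2 \setminus\{0\}$, computed via Proposition \ref{prop1}(c), equals $(1+\beta) + (2)S_F(\sqrt\alpha)\cdot\sqrt\beta$, and one checks this pulls back along $M \twoheadrightarrow S_F(\sqrt\alpha)(\sqrt\beta)$ to exactly the square-class data on $M$ needed to recover $S_{S_F(\sqrt\alpha)}(\sqrt\beta)$ as $M/_m(\text{appropriate } S)$. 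Doing the same on the $\beta$-first side gives the \emph{same} Marshall-coherent subset of $M$ (after transporting along the superfield isomorphism of the previous paragraph), because that subset is symmetric in $\alpha,\beta$ — it is generated by $\dot F^2$, $\alpha$, $\beta$, and the values $(1+\alpha), (1+\beta)$ and their products, all visibly symmetric.

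So the skeleton is: (1) superfield-level symmetry $F(\sqrt\alpha)(\sqrt\beta)\cong F(\sqrt\beta)(\sqrt\alpha)$ from polynomial ring symmetry and preservation of irreducibility (Theorem \ref{lemquadext}, plus $\beta\ne\pm1$ keeping $\sqrt\beta$ outside $F(\sqrt\alpha)$); (2) a lemma that $S_{S_F(\sqrt\alpha)}(\sqrt\beta) \cong M/_m S_{\alpha,\beta}$ for an explicit Marshall-coherent $S_{\alpha,\beta}\subseteq M$, proved by composing Marshall quotients and matching square-sets via Proposition \ref{prop1}; (3) observe $S_{\alpha,\beta}$ is symmetric under the isomorphism of (1); (4) conclude. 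The main obstacle I anticipate is step (2): Marshall quotients need not commute with arbitrary base extension, and one must be careful that adjoining $\sqrt\beta$ \emph{after} quotienting $L$ (so that $\beta$ is already a square class equal to $[1]$ when $\beta\in 1+\alpha$, etc.) produces the same coherent set as adjoining first and quotienting later — this requires checking that the Kronecker construction $(-)(\sqrt\beta)$ interacts correctly with $/_m$, which is where Lemma \ref{lemsum1} and the explicit description of $\sum F(\omega)^2$ in Proposition \ref{prop1}(b),(c) do the real work. A cleaner alternative, if the direct matching proves fiddly, is to verify both sides satisfy the same universal property: each receives a canonical full morphism from $F$ killing $\dot F^2$ and sending both $\alpha,\beta$ to squares, and is initial (in the special-hyperfield sense) among such — but I would attempt the explicit isomorphism first since Proposition \ref{prop1} already packages the needed square computations.
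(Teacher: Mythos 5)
Your proposal matches the paper's actual proof in its first step (the superfield-level isomorphism $F(\sqrt\alpha)(\sqrt\beta)\cong F(\sqrt\beta)(\sqrt\alpha)$) and in its conclusion, but the paper's route from there is exactly the ``cleaner alternative'' you mention at the end rather than your primary plan. Concretely, the paper sets $q_1, q_2$ for the base-change maps $F(\sqrt\alpha)(\sqrt\beta)\to S_F(\sqrt\alpha)(\sqrt\beta)$ and similarly for $\beta$, and $\pi_1,\pi_2$ for the second Marshall quotients, observes that $\pi_2\circ q_2\circ\varphi$ sends $F(\sqrt\alpha)(\sqrt\beta)^2\setminus\{0\}$ to $\{1\}$, invokes the universal property of Theorem~\ref{marshallquo-teo}(ii) to produce $\varphi_{\alpha\beta}$ and, by symmetry, $\varphi_{\beta\alpha}$, and concludes by uniqueness that these are mutually inverse. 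It never carries out your step (2), the explicit description of the Marshall-coherent subset $S_{\alpha,\beta}\subseteq M$ — that computation, while correct in outline, is more work than the paper undertakes.

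One thing worth flagging: the paper's application of the universal property is itself slightly compressed, and your identification of the ``main obstacle'' is on target. For the universal property of Theorem~\ref{marshallquo-teo}(ii) to deliver a factorization through $S_{S_F(\sqrt\alpha)}(\sqrt\beta)$, one must know that the composite $\pi_1\circ q_1 : F(\sqrt\alpha)(\sqrt\beta)\to S_{S_F(\sqrt\alpha)}(\sqrt\beta)$ is itself (up to isomorphism) a Marshall quotient of $F(\sqrt\alpha)(\sqrt\beta)$ by a coherent subset containing exactly what $\pi_2\circ q_2\circ\varphi$ kills — i.e.\ that quotient-then-extend-then-quotient collapses to a single quotient. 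This is precisely the commutation issue you raised, and the paper treats it as transparent. Your proposed use of Proposition~\ref{prop1}(b),(c) and Lemma~\ref{lemsum1} to make it explicit would fill in that detail. So your instinct to attempt the explicit matching first is defensible, but for a shorter proof the universal-property argument (your alternative, the paper's actual choice) suffices once one accepts the implicit collapsibility of nested quotients.
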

\begin{proof}
We already know that 
$$F(\sqrt{\alpha})(\sqrt{\beta})\cong F(\sqrt{\beta})(\sqrt{\alpha})$$
and 
$$F(\sqrt{\alpha})(\sqrt{\beta})=F+F\sqrt{\alpha}+F\sqrt{\beta}+F\sqrt{\alpha}\sqrt{\beta}.$$
Let $\varphi:F(\sqrt{\alpha})(\sqrt{\beta})\rightarrow F(\sqrt{\beta})(\sqrt{\alpha})$ be an isomorphism and write $q_1:F(\sqrt{\alpha})(\sqrt{\beta})\rightarrow S_F(\sqrt{\alpha})(\sqrt{\beta})$ and $q_2:F(\sqrt{\beta})(\sqrt{\alpha})\rightarrow S_F(\sqrt{\beta})(\sqrt{\alpha})$ the natural projections. For instance, $q_1$ is given by the rule
$$q_1(a_0+a_1\sqrt{\alpha}+a_2\sqrt{\beta}+a_3\sqrt{\alpha}\sqrt{\beta}):=
[a_0+a_1\sqrt{\alpha}]+[a_2+a_3\sqrt{\alpha}]\sqrt{\beta}\in S_F(\sqrt{\alpha})(\sqrt{\beta}).$$
Similarly for $q_2$. 

Now, let $\pi_1:S_F(\sqrt{\alpha})(\sqrt{\beta})\rightarrow
S_{S_F(\sqrt{\alpha})}(\sqrt{\beta})$ and $\pi_2:S_F(\sqrt{\beta})(\sqrt{\alpha})\rightarrow
S_{S_F(\sqrt{\beta})}(\sqrt{\alpha})$ be the quotient morphisms. Since $\varphi[F(\sqrt{\alpha})(\sqrt{\beta})^2]=F(\sqrt{\beta})(\sqrt{\alpha})^2$ 
and $q_2[F(\sqrt{\beta})(\sqrt{\alpha})^2]\subseteq(S_F(\sqrt{\beta})(\sqrt{\alpha}))^2$,
we have that $\pi_2\circ q_2\circ\varphi:F(\sqrt{\alpha})(\sqrt{\beta})\rightarrow
S_{S_F(\sqrt{\beta})}(\sqrt{\alpha})$ is a morphism such that
$$\pi_2\circ q_2\circ\varphi[F(\sqrt{\alpha})(\sqrt{\beta})]=\{1\}.$$
By the universal property there is an unique morphism 
$\varphi_{\alpha\beta}:S_{S_F(\sqrt{\alpha})}(\sqrt{\beta})\rightarrow
S_{S_F(\sqrt{\beta})}(\sqrt{\alpha})$. Using the same argument, there is an unique morphism $\varphi_{\beta\alpha}:S_{S_F(\sqrt{\beta})}(\sqrt{\alpha})\rightarrow
S_{S_F(\sqrt{\alpha})}(\sqrt{\beta})$. The universal property forces $\varphi_{\alpha\beta}\circ\varphi_{\beta\alpha}=id$ and 
$\varphi_{\beta\alpha}\circ\varphi_{\alpha\beta}=id$.
\end{proof}

With Theorem \ref{sext1} we are able to properly iterate the construction $S_F(\omega)$. For $a_1,...,a_n\in\dot F$, we define recursively:
\begin{align*}
    S_{F(\sqrt{a_1},\sqrt{a_2})}&:=S_{S_F(\sqrt{a_1})}(\sqrt{a_2}); \\
    S_{F(\sqrt{a_1},...,\sqrt{a_{n+1}})}&:=
    S_{S_F(\sqrt{a_1},...,\sqrt{a_n})}(\sqrt{a_{n+1}}).
\end{align*}

\begin{cor}\label{cor333}
 Let $F$ be a special hyperfield, $\alpha_1,\alpha_2,...,\alpha_n\in\dot F$, and $\sigma\in S_n$. Then
 $$S_{F(\sqrt{a_1},...,\sqrt{a_n})}\cong
 S_{F(\sqrt{a_{\sigma(1)}},...,\sqrt{a_{\sigma(n)}})}.$$
\end{cor}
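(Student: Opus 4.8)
The plan is to prove the statement by induction on $n$, using Theorem \ref{sext1} as the key transposition step together with the standard fact that the symmetric group $S_n$ is generated by adjacent transpositions $(i\ \ i{+}1)$. For $n \leq 1$ there is nothing to prove, and $n=2$ is exactly Theorem \ref{sext1}. For the inductive step, I would first handle the case where $\sigma$ is an adjacent transposition $(i\ \ i{+}1)$: here the two iterated constructions agree in their first $i-1$ coordinates, so by the recursive definition of $S_{F(\sqrt{a_1},\dots,\sqrt{a_n})}$ and the induction hypothesis we may replace the common base $S_{F(\sqrt{a_1},\dots,\sqrt{a_{i-1}})}$ by a single superfield, call it $F'$, and reduce to showing
$$S_{S_{F'}(\sqrt{a_i})}(\sqrt{a_{i+1}}) \cong S_{S_{F'}(\sqrt{a_{i+1}})}(\sqrt{a_i}),$$
which is Theorem \ref{sext1} applied to $F'$. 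Then one must check that adjoining the remaining roots $\sqrt{a_{i+2}},\dots,\sqrt{a_n}$ on top of both sides preserves the isomorphism; this is because the construction $S_{(-)}(\sqrt{b})$ is functorial in an appropriate sense — an isomorphism of special hyperfields $\psi : H_1 \to H_2$ sending the class of $a$ on one side to the class of $a$ on the other lifts, via the universal property of Marshall's quotient (Theorem \ref{marshallquo-teo}) applied to the composite $H_1[X]/\langle X^2 - a\rangle \to H_1(\sqrt a) \to S_{H_2}(\sqrt a)$, to an isomorphism $S_{H_1}(\sqrt a) \cong S_{H_2}(\sqrt a)$, and one iterates.

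For a general $\sigma \in S_n$, I would write $\sigma$ as a product of adjacent transpositions $\sigma = \tau_1 \tau_2 \cdots \tau_k$ and compose the isomorphisms obtained at each stage. Concretely, each $\tau_j$ acts on a tuple differing from the original only by swapping two consecutive entries, and the adjacent-transposition case above (with the appropriate $F'$ depending on which consecutive pair is swapped) supplies an isomorphism between the iterated constructions before and after applying $\tau_j$; chaining these $k$ isomorphisms yields
$$S_{F(\sqrt{a_1},\dots,\sqrt{a_n})} \cong S_{F(\sqrt{a_{\sigma(1)}},\dots,\sqrt{a_{\sigma(n)}})}.$$
One should remark that the resulting isomorphism is a priori independent of the chosen factorization of $\sigma$ into adjacent transpositions — though this coherence is not needed for the bare statement, only the existence of some isomorphism is claimed.

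The main obstacle I anticipate is the bookkeeping in the inductive step: making precise that $S_{(-)}(\sqrt{b})$ is functorial with respect to isomorphisms and that the isomorphisms produced by Theorem \ref{sext1} at an "interior" position $i$ genuinely commute with the subsequent quotient-and-adjoin operations at positions $i+2,\dots,n$. This is where the universal property of Theorem \ref{marshallquo-teo} does the real work: every map in sight factors through a Marshall's quotient, so uniqueness of the induced morphism forces the relevant squares to commute, exactly as in the proof of Theorem \ref{sext1} itself (where the universal property "forces $\varphi_{\alpha\beta}\circ\varphi_{\beta\alpha}=\mathrm{id}$"). Once this functoriality is isolated as a lemma, the rest is a routine induction on $n$ combined with the generation of $S_n$ by adjacent transpositions.
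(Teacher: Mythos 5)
Your proposal is correct and supplies precisely the argument the paper leaves implicit: the corollary is stated without proof, immediately after the authors remark that Theorem \ref{sext1} ``allows us to properly iterate the construction,'' so the intended route is clearly the one you spell out — reduce to adjacent transpositions, apply Theorem \ref{sext1} at the base $F' = S_{F(\sqrt{a_1},\dots,\sqrt{a_{i-1}})}$, and propagate the resulting isomorphism through the remaining quotient-and-adjoin steps via the universal property of Theorem \ref{marshallquo-teo}. The one point worth isolating explicitly, as you note, is the functoriality lemma: an isomorphism $\psi\colon H_1\to H_2$ of special hyperfields fixing (the classes of) the base elements induces $\psi^X\colon H_1[X]\to H_2[X]$, descends through $\langle X^2-a\rangle$, and then through the Marshall quotient by squares, with uniqueness forcing the inverse pair to compose to identities — exactly the mechanism already used inside the proof of Theorem \ref{sext1} itself.
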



The next step is to describe the isometry relation $\equiv_{S_F(\omega)}$ in $S_F(\omega)$ in terms of isometry relation $\equiv_F$ in $F$.

If $\varphi=\langle a_1,...,a_n\rangle$ is a form on $F$, we denote the form $[\varphi]$ on $S_F(\omega)$ simply by $[\varphi]:=\langle [a_1],...,[a_n]\rangle$. We say that \textbf{$[\varphi]$ is the equivalence class of $\varphi$ in $S_F(\omega)$}. Of course, if $\varphi=\varphi_1\oplus\varphi_2$ (or $\varphi=\varphi_1\otimes\varphi_2$) then $[\varphi]=[\varphi_1]\oplus[\varphi_2]$ (or $[\varphi]=[\varphi_1]\otimes[\varphi_2]$). We have the following useful consequences of Proposition \ref{prop1} (which we will use freely): 
\begin{rem}
$ $
\begin{enumerate}[a -]
    \item If $\varphi\equiv\psi$ on $F$ then $[\varphi]\equiv[\psi]$ on $S_F(\omega)$;
    \item if $\varphi$ is isotropic on $F$ then $[\varphi]$ is isotropic on $S_F(\omega)$;
    \item if $[\varphi]$ is anisotropic on $S_F(\omega)$ then if $\varphi$ is anisotropic on $F$.
\end{enumerate}
\end{rem}

\begin{teo}\label{iso0}
 Let $a,b\in F$. Then $[a]=[b]$ in $S_F(\omega)$ iff there is $s,t\in1+\alpha$ with $as=bt$ (or $a=bst$).
\end{teo}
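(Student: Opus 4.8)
The plan is to unwind the definition of the Marshall quotient $S_F(\omega)=F(\omega)/_m S$, where $S=F(\omega)^2\setminus\{0\}$, and then feed in the explicit description of $F(\omega)^2$ from Proposition \ref{prop1}. By definition $[a]=[b]$ in $S_F(\omega)$ means $a\sim_S b$ in $F(\omega)$, and by the lemma characterizing $\sim$ (the equivalence ``$a\sim b$ $\iff$ there are $s,t\in S$ with $as\cap bt\neq\emptyset$'') this is the same as: there are $u,v\in S$ with $au\cap bv\neq\emptyset$. Since $a,b\in F$ and multiplication by an element of $F$ is single-valued on $F(\omega)=F[X]/\langle X^2-\alpha\rangle$ (it is coordinatewise multiplication of the degree-$\le 1$ representative), both $au$ and $bv$ are singletons, so the condition is simply $au=bv$ in $F(\omega)$. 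Writing each element of $F(\omega)$ uniquely as $c_0+c_1\omega$ with $c_0,c_1\in F$ (the degree-$\le 1$ normal form, cf. the description of $F[X]/\langle p\rangle$ in Theorem \ref{lemquadext}) and invoking $F(\omega)^2=(1+\alpha)+(2)F\cdot\omega$ from Proposition \ref{prop1}(c), any $u\in S$ has the form $u=u_0+u_1\omega$ with $u_0\in 1+\alpha$ and $u_1\in(2)F$, and similarly for $v$.

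For the forward implication I would compare components in $au=bv$: since $au=(au_0)+(au_1)\omega$ and $bv=(bv_0)+(bv_1)\omega$, the constant parts give $au_0=bv_0$ with $u_0,v_0\in 1+\alpha$, so $s:=u_0$ and $t:=v_0$ work. The parenthetical reformulation ``$a=bst$'' is then immediate: $F$ is special, so $s^2=t^2=1$, whence $as=bt\iff a=bts$.

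For the converse, assume $s,t\in 1+\alpha$ with $as=bt$. The heart of the argument is the observation that if $s\in 1+\alpha$ and $\sigma$ is any element of $(1+\omega)^2\subseteq S$, then $s\sigma\in S$ as well. Granting this, fix such a $\sigma$; then $s\sigma$ and $t\sigma$ lie in $S$ and $a(s\sigma)=(as)\sigma=(bt)\sigma=b(t\sigma)$, so $a(s\sigma)\cap b(t\sigma)\neq\emptyset$ and hence $a\sim_S b$, i.e. $[a]=[b]$. To prove the observation, write $\sigma=e+f\omega$ with $e\in 1+\alpha$ and $f\in 1+1$ (using $(1+\omega)^2=(1+\alpha)+(1+1)\omega$, exactly as in the proof of Proposition \ref{prop1}(a)); then $s\sigma=(se)+(sf)\omega$ where $se\in(1+\alpha)(1+\alpha)\subseteq 1+\alpha$ (multiplicative stability of $1+\alpha$, already exploited in Proposition \ref{prop1}(a)) and $sf\in s+s\subseteq(2)F$. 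Moreover $sf\neq 0$: otherwise $0\in s+s$, forcing $-1\in 1$, contradicting formal reality of $F$. Thus $s\sigma$ has constant part in $1+\alpha$ and nonzero $\omega$-part in $(2)F$, so by Proposition \ref{prop1}(c) it lies in $F(\omega)^2\setminus\{0\}=S$.

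The step I expect to be the main obstacle is precisely this last observation — tracking the two coordinates of $s\sigma$ and verifying it is still a nonzero square. It rests on three facts already available: the closure $(1+\alpha)(1+\alpha)\subseteq 1+\alpha$, the stability of $(2)F$ under multiplication by elements of $F$, and formal reality (to rule out $sf=0$). Everything else is a routine unwinding of the Marshall quotient and of the structure of $F(\omega)$.
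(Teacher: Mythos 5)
Your forward direction is essentially the paper's argument: reduce to $au\cap bv\neq\emptyset$ with $u,v\in F(\omega)^2\setminus\{0\}$, use that multiplication by elements of $F$ is single-valued to get $au=bv$, and compare constant coefficients using $F(\omega)^2=(1+\alpha)+(2)F\cdot\omega$. For the converse your argument is correct but the detour through $\sigma\in(1+\omega)^2$ is unnecessary: since $0\in(2)F$ (take $x=0$ in $x+x$), Proposition \ref{prop1}(c) already gives $1+\alpha=(1+\alpha)+0\cdot\omega\subseteq F(\omega)^2\setminus\{0\}=S$, so $s,t$ themselves lie in $S$ and $as=bt$ immediately yields $a\sim_S b$ — which is exactly the paper's one-line justification.
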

\begin{proof}
 ($\Rightarrow$) Suppose that $[a]=[b]$ in $S_F(\omega)$. Then there exist $X,Y\subseteq F(\omega)^2\setminus\{0\}$ with $aX=bY$. Let $r_1+s_1\omega\in X$, with $r\in1+\alpha$. Then 
 $$a(r_1+s_1\omega)=ar_1+as_1\omega\in bY,$$
 and there exist $r_2+s_2\omega\in Y$ with
 $$\emptyset\ne(ar_1+as_1\omega)\cap(b(r_2+s_2\omega)).$$
 But $b(r_2+s_2\omega)=\{br_2+bs_2\omega\}$. Then $ar_1+as_1\omega=br_2+bs_2\omega$, which imply $ar_1=br_2$ and $as_1=bs_2$.
 
 ($\Leftarrow$) Immediate since $1+\alpha\subseteq F(\omega)^2\setminus\{0\}$.
\end{proof}

\begin{teo}\label{teo32}
 Let $a,b\in F$. Then $[a]=[b]$ in $S_{F(\sqrt{a_1},...,\sqrt{a_n})}$ iff there is $s,t\in D_F(\langle\langle\alpha_1,\alpha_2,...,\alpha_n\rangle\rangle)$
 \footnote{Here $\langle\langle\alpha_1,\alpha_2,...,\alpha_n\rangle\rangle$ denotes the Pfister form $\langle1,\alpha_1\rangle\otimes\langle1,\alpha_2\rangle\otimes...\otimes\langle1,\alpha_n\rangle$.}
 such that $as=bt$ (or $a=bst$ or even $ab\in D_F(\langle\langle\alpha_1,\alpha_2,...,\alpha_n\rangle\rangle)$).
\end{teo}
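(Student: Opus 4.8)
The plan is to induct on $n$, the base case $n=1$ being exactly Theorem \ref{iso0} once one observes that $D(\langle\langle\alpha_1\rangle\rangle)=D_F(\langle 1,\alpha_1\rangle)=1+\alpha_1$. Throughout I assume (as is implicit in the statement, the zero case being degenerate) that $a,b\in\dot F$, and I write $\rho_k=\langle\langle\alpha_1,\ldots,\alpha_k\rangle\rangle$. Before the induction I would record, once and for all, that the three forms of the right-hand side are equivalent: each $\rho_k$ is an \emph{anisotropic} Pfister form (this uses the standing hypotheses that $F$ is formally real and that every extension in the tower remains formally real, i.e. $-1\notin D_F(\rho_k)$), hence its value set $D_F(\rho_k)$ is a subgroup of $\dot F$ containing $1$; since $a^2=b^2=1$ in the special hyperfield $F$, one gets ``$\exists\,s,t\in D_F(\rho_k)$ with $as=bt$'' $\iff$ ``$a=bst$'' $\iff$ ``$ab\in D_F(\rho_k)$''.

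For the inductive step put $G:=S_{F(\sqrt{a_1},\ldots,\sqrt{a_n})}$. By iterating Proposition \ref{prop1}(g) and composing full morphisms, the canonical map $\iota\colon F\to G$ is a \emph{full} morphism, and by Theorem \ref{qext1} $G$ is a hyperfield, so all products in $G$ are singletons. By the recursive definition, $S_{F(\sqrt{a_1},\ldots,\sqrt{a_{n+1}})}=S_G(\sqrt{a_{n+1}})$, where $\sqrt{a_{n+1}}$ is a root of $X^2-[\alpha_{n+1}]\in G[X]$; this polynomial is irreducible over $G$ because $[\alpha_{n+1}]\neq 1$ in $G$, which by the induction hypothesis is precisely $\alpha_{n+1}\notin D_F(\rho_n)$ — again part of the standing hypotheses, along with $-1\notin D_F(\rho_{n+1})$, which is what lets us apply Theorem \ref{iso0} to $G$. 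Doing so: for $a,b\in F$, $[a]=[b]$ in $S_G(\sqrt{a_{n+1}})$ iff there are $u,v\in 1+[\alpha_{n+1}]$ (sum computed in $G$) with $\iota(a)u=\iota(b)v$. Fullness of $\iota$ gives $1+[\alpha_{n+1}]=\iota(1+\alpha_{n+1})=\{\iota(w):w\in D_F(\langle 1,\alpha_{n+1}\rangle)\}$, and $\iota(a)\iota(w)=\iota(aw)$ since $G$ is a hyperfield, so the condition rewrites as: there exist $u_0,v_0\in D_F(\langle 1,\alpha_{n+1}\rangle)$ with $[au_0]=[bv_0]$ in $G$.

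Now apply the induction hypothesis to $au_0,bv_0\in F$: $[au_0]=[bv_0]$ in $G$ iff there are $s,t\in D_F(\rho_n)$ with $a(u_0s)=b(v_0t)$. As $(u_0,s)$ ranges over $D_F(\langle 1,\alpha_{n+1}\rangle)\times D_F(\rho_n)$ the product $u_0s$ ranges over the set product $D_F(\langle 1,\alpha_{n+1}\rangle)\cdot D_F(\rho_n)$, which by the multiplicativity of Pfister forms equals $D_F(\rho_n\otimes\langle 1,\alpha_{n+1}\rangle)=D_F(\rho_{n+1})$ — and likewise for $v_0t$. Hence $[a]=[b]$ in $S_{F(\sqrt{a_1},\ldots,\sqrt{a_{n+1}})}$ iff there are $s',t'\in D_F(\rho_{n+1})$ with $as'=bt'$, which closes the induction.

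The main obstacle is the input from the abstract theory of quadratic forms over special hyperfields (equivalently, special groups): that an anisotropic Pfister form is a \emph{group form}, so that $D_F(\rho)$ is a subgroup of $\dot F$ and $D_F(\rho\otimes\langle 1,\alpha\rangle)=D_F(\rho)\cdot D_F(\langle 1,\alpha\rangle)$ — this is not among the results proved in the excerpt and must be cited. The other delicate point is keeping track, along the whole tower, that each Pfister form $\rho_1,\ldots,\rho_{n+1}$ stays anisotropic (equivalently $-1\notin D_F(\rho_k)$), which is exactly what guarantees that each successive $S_G(\sqrt{a_{n+1}})$ is well-defined, formally real, and within the hypotheses of Theorem \ref{iso0}. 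Everything else is bookkeeping with the fullness of $\iota\colon F\to G$ and the recursive definition of $S_{F(\sqrt{a_1},\ldots,\sqrt{a_n})}$.
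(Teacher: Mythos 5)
Your overall architecture (induction on $n$, with Theorem \ref{iso0} as base case and as the tool for the inductive step one level up the tower) matches the paper's, but the last link of your chain is a genuine gap: the identity $D_F(\langle1,\alpha_{n+1}\rangle)\cdot D_F(\rho_n)=D_F(\rho_{n+1})$ is false in general. Multiplicativity of Pfister forms gives that $D_F(\rho_{n+1})$ is a subgroup containing both factors, hence the inclusion $D_F(\langle1,\alpha_{n+1}\rangle)\cdot D_F(\rho_n)\subseteq D_F(\rho_{n+1})$, but not the reverse. Concretely, for $F$ the special hyperfield of $\mathbb{Q}$ and $\alpha_1=\alpha_2=2$: one has $5=1^2+2^2\in D(\langle1,2,2,4\rangle)=D(\langle\langle2,2\rangle\rangle)$, whereas $D(\langle1,2\rangle)\cdot D(\langle1,2\rangle)=D(\langle1,2\rangle)$ is the norm group of $\mathbb{Q}(\sqrt{-2})$ and does not contain $5$ (the Hilbert symbol $(5,-2)_5=-1$). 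So your chain of equivalences terminates at ``$ab\in\prod_i D_F(\langle1,\alpha_i\rangle)$'', a set that is in general strictly smaller than $D_F(\langle\langle\alpha_1,\ldots,\alpha_{n+1}\rangle\rangle)$; and since the theorem is an ``iff'', if it is true then at least one earlier equivalence in your chain must also break down.

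The link that breaks is the computation of $[1]+[\alpha_{n+1}]$ inside $G=S_{F(\sqrt{a_1},\ldots,\sqrt{a_n})}$ as the image of $1+_F\alpha_{n+1}$. Addition in a Marshall quotient is given by the saturation of Lemma \ref{lemsum1} ($cs\subseteq aS+bS$ for some $s\in S$) and is in general strictly larger than the image of the addition upstairs; this is precisely why the paper, in its own proof, unwinds ``$[ab]\in[1]+[\alpha_2]$ in $S_F(\sqrt{\alpha_1})$'' not through fullness of $F\to S_F(\sqrt{\alpha_1})$ but through $abs\subseteq S+\alpha_2 S$ with $S=F(\sqrt{\alpha_1})^2\setminus\{0\}$, whose $F$-part is the \emph{orthogonal sum} $(1+\alpha_1)+\alpha_2(1+\alpha_1)=D_F(\langle\langle\alpha_1,\alpha_2\rangle\rangle)$ --- a sum of value sets, not a product of them. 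That is where the Pfister form actually enters the argument, and your product-based bookkeeping loses exactly this. Note also that the paper only proves the implication $(\Rightarrow)$, the converse being immediate because the quotient is real reduced and $[\alpha_i]=[1]$ there; organizing the proof that way removes any need for an exact set identity. To repair your argument, replace the fullness-based description of $[1]+_G[\alpha_{n+1}]$ by the Lemma \ref{lemsum1} description (together with Proposition \ref{prop1}(b),(c) for the squares of the extension) and chase only the forward implication.
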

\begin{proof}
Since $S_{F(\sqrt{a_1},...,\sqrt{a_n})}$ is a real reduced hyperfield and in $S_{F(\sqrt{a_1},...,\sqrt{a_n})}$, $\{[\alpha_1],...,[\alpha_n]\}=\{[1]\}$ we only need to prove $(\Rightarrow)$.

Let $a,b\in F$. If $[a]=[b]$ in $S_{F(\sqrt{\alpha_1},\sqrt{\alpha_2})}=S_{S_{F(\sqrt{\alpha_1})}}(\sqrt{\alpha_2})$, then by Theorem \ref{iso0} (changing $F$ by $S_{F(\sqrt{\alpha_1})}$) we have $[ab]\in[1]+[\alpha_2]$ (in $S_{F(\sqrt{\alpha_1})}$). Then there exist $s\in1+\alpha_1$ such that 
$$abs\in(1+\alpha_1)+\alpha_2(1+\alpha_1)=D_F(\langle1,\alpha_1,\alpha_2,\alpha_1\alpha_2\rangle)=D_F(\langle\langle\alpha_1,\alpha_2\rangle\rangle).$$
This means $abs\in D_F(\langle\langle\alpha_1,\alpha_2\rangle\rangle)$. 

Now suppose the desired valid for $n$. By induction hypothesis $[a]=[b]$ in 
$$S_{F(\sqrt{a_1},...,\sqrt{a_{n+1}})}\cong S_{(S_F(\sqrt{a_1}))(\sqrt{a_2},...,\sqrt{a_{n+1}})}\footnote{We are doing a convenient use of Corollary \ref{cor333}.}$$ iff
$$[ab]\in D_{S_F(\sqrt{a_1})}(\langle\langle[\alpha_2],
[\alpha_2],...,[\alpha_n]\rangle\rangle).$$
This imply
$$ab\in D_{F}(\langle\langle\alpha_2,
\alpha_2,...,\alpha_n\rangle\rangle)\cdot(1+\alpha_1)\subseteq
D_F(\langle\langle\alpha_1,\alpha_2,...,\alpha_{n+1}\rangle\rangle).$$
\end{proof}

\begin{cor}\label{cor33}
  Let $F$ be a special hyperfield and $\alpha_1,\alpha_2,...,\alpha_n\in\dot F\setminus\{\pm1\}$. Then $S_{F(\sqrt{a_1},...,\sqrt{a_n})}$ is formally real iff $-1\notin D_F(\langle\langle\alpha_1,\alpha_2,...,\alpha_n\rangle\rangle)$.
\end{cor}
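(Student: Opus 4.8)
The plan is to derive this directly from Theorem \ref{teo32}, which already characterizes the identity $[a]=[b]$ in $S_{F(\sqrt{a_1},\dots,\sqrt{a_n})}$ in terms of the Pfister form $\langle\langle\alpha_1,\dots,\alpha_n\rangle\rangle$ over $F$. Recall that $S_{F(\sqrt{a_1},\dots,\sqrt{a_n})}$ is a real reduced hyperfield (by iterating Theorem \ref{teo150}, given the running hypothesis $-1\notin 1+\alpha_i$ at each stage), hence it is ``formally real'' precisely when $-1\neq 1$ in it, equivalently when $[-1]\neq[1]$. So the whole problem reduces to translating the statement $[-1]=[1]$ via Theorem \ref{teo32}.

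First I would apply Theorem \ref{teo32} with $a=-1$ and $b=1$: we get that $[-1]=[1]$ in $S_{F(\sqrt{a_1},\dots,\sqrt{a_n})}$ if and only if there exist $s,t\in D(\langle\langle\alpha_1,\dots,\alpha_n\rangle\rangle)$ with $(-1)s=t$, equivalently (using the third formulation in that theorem) if and only if $(-1)\cdot 1\in D(\langle\langle\alpha_1,\dots,\alpha_n\rangle\rangle)$, i.e. $-1\in D(\langle\langle\alpha_1,\dots,\alpha_n\rangle\rangle)$. Taking the contrapositive: $[-1]\neq[1]$ in $S_{F(\sqrt{a_1},\dots,\sqrt{a_n})}$ if and only if $-1\notin D(\langle\langle\alpha_1,\dots,\alpha_n\rangle\rangle)$. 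Combining this with the observation in the previous paragraph that, for a real reduced hyperfield, being formally real is the same as $[-1]\neq[1]$, the corollary follows.

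The one point that needs a little care — and which I expect to be the main (minor) obstacle — is justifying the passage ``$S_{F(\sqrt{a_1},\dots,\sqrt{a_n})}$ is real reduced, hence formally real $\iff$ $[-1]\neq[1]$''. In a real reduced hyperfield one has $1+1=\{1\}$, so $\sum (S_{F(\sqrt{a_1},\dots,\sqrt{a_n})})^2 = \{[a]^2 : [a]\neq 0\}$, and the hyperfield is formally real exactly when $-1$ is not a sum of squares, which in the real reduced setting collapses to $[-1]\notin\{[1]\}$ since every nonzero square equals $[1]$. I would also note that one must invoke Corollary \ref{cor33}'s hypothesis (or rather the inductive hypothesis built into the definition of $S_{F(\sqrt{a_1},\dots,\sqrt{a_n})}$) that each intermediate construction is legitimate, i.e. that the relevant $-1\notin 1+\alpha_i$ holds so that Theorem \ref{teo150} applies at each step; but since $-1\notin D(\langle\langle\alpha_1,\dots,\alpha_n\rangle\rangle)$ forces $-1\notin D(\langle\langle\alpha_1,\dots,\alpha_k\rangle\rangle)$ for every $k\le n$ (as $\langle\langle\alpha_1,\dots,\alpha_k\rangle\rangle$ is a subform of $\langle\langle\alpha_1,\dots,\alpha_n\rangle\rangle$ up to the representation sets nesting), the direction ``$-1\notin D(\langle\langle\vec\alpha\rangle\rangle)\Rightarrow$ formally real'' is self-consistent, and the reverse direction only uses that the tower is well-defined, which is part of the standing setup of the section.
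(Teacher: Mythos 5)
Your proof is correct and follows essentially the same route as the paper's: invoke that $S_{F(\sqrt{a_1},\dots,\sqrt{a_n})}$ is a real reduced hyperfield (so ``formally real'' reduces to $[-1]\neq[1]$), then apply Theorem~\ref{teo32} with $a=-1$, $b=1$ to translate $[-1]=[1]$ into $-1\in D(\langle\langle\alpha_1,\dots,\alpha_n\rangle\rangle)$. The extra care you take about the tower being well-defined at each stage is a reasonable observation, but it is part of the standing setup of the section and does not alter the argument.
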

\begin{proof}
Since $S_{F(\sqrt{a_1},...,\sqrt{a_n})}$ is a real reduced hyperfield, we have $S_{F(\sqrt{a_1},...,\sqrt{a_n})}$ formally real iff $[1]\ne[-1]$, which by Theorem \ref{teo32} occurs iff $-1\notin D(\langle\langle\alpha_1,\alpha_2,...,\alpha_n\rangle\rangle)$.
\end{proof}

\begin{teo}\label{iso2}
 Let $a,b,c,d\in\dot F$. Then $\langle[a],[b]\rangle\equiv_{S_F(\omega)}\langle[c],[d]\rangle$ iff
 $\langle ar,bs\rangle\equiv_{F}\langle c,dt\rangle$ for some $r,s,t\in1+\alpha$.
\end{teo}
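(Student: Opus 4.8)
\textbf{Proof plan for Theorem \ref{iso2}.}

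The plan is to reduce the two-dimensional isometry relation over $S_F(\omega)$ to the corresponding relation over $F$ by the standard special-group criterion: $\langle x,y\rangle\equiv\langle z,w\rangle$ iff $xy=zw$ (discriminant) and $xz\in D(\langle 1,zw\rangle)$ (one representation condition). Applying this in $S_F(\omega)$ to $\langle[a],[b]\rangle$ and $\langle[c],[d]\rangle$ splits the problem into (1) translating the discriminant equation $[a][b]=[c][d]$ into $F$, and (2) translating the value condition $[a][c]\in D_{S_F(\omega)}(\langle[1],[c][d]\rangle)$ into $F$. For (1) I would invoke Theorem \ref{iso0}: $[ab]=[cd]$ in $S_F(\omega)$ iff there are $s_1,t_1\in 1+\alpha$ with $ab\,s_1=cd\,t_1$, equivalently $ab\,cd\in 1+\alpha$ after absorbing squares (using $1+\alpha$ multiplicatively closed, Proposition \ref{prop1}(a)).

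For (2) the key computation is to unwind $D_{S_F(\omega)}(\langle[1],[e]\rangle)$ for $e=cd\in\dot F$. An element $[u]$ lies in $[1]+[e]$ iff, by Lemma \ref{lemsum1}(i) and Proposition \ref{prop1}(b)--(c), there is a square $q\in F(\omega)^2\setminus\{0\}$ with $uq\in F(\omega)^2 + e\cdot F(\omega)^2$; since $F(\omega)^2\subseteq (1+\alpha)+F\omega$ and $\sum F(\omega)^2\setminus\{0\}=(1+\alpha)+F\omega$, the $\omega$-components can be discarded modulo $1+\alpha$ and one is left with $u\cdot(1+\alpha)$ meeting $D_F(\langle 1,\alpha,e,e\alpha\rangle)=D_F(\langle\langle\alpha\rangle\rangle)\cdot\{1,e\}$. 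So membership in $D_{S_F(\omega)}(\langle[1],[e]\rangle)$ pulls back to membership in $D_F(\langle 1,e\rangle)$ after multiplying $u$ and $e$ by suitable elements of $1+\alpha$. Feeding this back, the condition $[a][c]\in D_{S_F(\omega)}(\langle [1],[c][d]\rangle)$ becomes $a c r'\in D_F(\langle 1, cdt'\rangle)$ for some $r',t'\in 1+\alpha$, and then, again by Lemma 1.5(a) of \cite{dickmann2000special} now read in $F$, the pair $(ar, bs)$ and $(c, dt)$ become isometric over $F$ for an appropriate choice of $r,s,t\in 1+\alpha$ — the discriminant condition from (1) and the value condition combining to give exactly the two clauses of the special-group isometry criterion.

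I would organize the write-up as: first fix the $2$-dimensional isometry criterion and state it in both $S_F(\omega)$ and $F$; then prove the lemma describing $D_{S_F(\omega)}(\langle[1],[e]\rangle)$ in terms of $D_F(\langle\langle\alpha\rangle\rangle)$ and $1+\alpha$; then assemble the two directions. The $(\Leftarrow)$ direction is the easy one: if $\langle ar,bs\rangle\equiv_F\langle c,dt\rangle$ with $r,s,t\in 1+\alpha$, apply the functor $[\;\cdot\;]$ and use $[r]=[s]=[t]=[1]$ (Proposition \ref{prop1}(g)) together with the remark that isometries push forward, getting $\langle[a],[b]\rangle\equiv\langle[c],[d]\rangle$ directly. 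The $(\Rightarrow)$ direction needs the lemma and the bookkeeping of which $1+\alpha$-factors land where.

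The main obstacle I anticipate is purely the bookkeeping in $(\Rightarrow)$: keeping track of the finitely many independent factors from $1+\alpha$ (one from the discriminant equation, one or two from the value condition, plus any squares absorbed when converting $F(\omega)^2$-membership to $D_F(\langle\langle\alpha\rangle\rangle)$-membership) and showing they can be consolidated into exactly three elements $r,s,t$ of $1+\alpha$ with the stated placement $\langle ar,bs\rangle\equiv_F\langle c,dt\rangle$. This requires using that $1+\alpha$ is closed under products and contains $1$, that $F$ is (pre-)special so squares act trivially and $D_F$ is multiplicatively closed on the relevant Pfister form, and a small amount of care that isotropy/anisotropy transfers correctly (the Remark after Proposition \ref{prop1}); none of it is deep, but it is where an error would most likely hide.
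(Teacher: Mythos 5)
Your proposal is correct and follows essentially the same route as the paper's proof: reduce to the two-dimensional isometry criterion (discriminant plus one representation condition), handle the discriminant via Theorem \ref{iso0}, unwind $[ac]\in[1]+[cd]$ via Lemma \ref{lemsum1} and the description of $F(\omega)^2$ from Proposition \ref{prop1}, compare the $F$- and $\omega$-components separately, and consolidate the accumulated factors from $1+\alpha$ into $r,s,t$. The paper does this as a single inline computation (writing the witnessing square as $x_1+x_2\omega$ and extracting $avwc\in y_1+cdz_1$ with $y_1,z_1\in 1+\alpha$) rather than isolating a lemma on $D_{S_F(\omega)}(\langle[1],[e]\rangle)$, but the content is identical.
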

\begin{proof}
  ($\Rightarrow$) Let $\langle[a],[b]\rangle\equiv_{S_F(\omega)}\langle[c],[d]\rangle$. Then $[a][b]=[c][d]$ and $[a][c]\in1+[c][d]$. Hence, there are $v,w\in1+\alpha$ and $x\in S=F(\omega)^2\setminus\{0\}$ with $abv=cdw$ (or $abvw=cd$) and $acx\in S+cdS$. Write $x=x_1+x_2\omega$. We have
  $$avwcx_1x\in vx_1(S+cdS)\subseteq vx_1S+cdvx_1S\subseteq S+cdS.$$
  Then 
 $$avwcx_1(x_1+x_2\omega)\in(y_1+y_2\omega)+cd(z_1+z_2\omega)$$
 for some $y_1,z_1\in1+\alpha$ and $y_2,z_2\in F$. This means
 $$avwc+avwcx_1x_2\omega\in(y_1+cdz_1)+(y_2+cdz_2)\omega,$$
 and then, $avwc\in y_1+cdz_1$ and $avwcx_1x_2\in y_2+cdz_2$. Then $avwcy_1\in1+cdy_1z_1$ or equivalently, $(avwy_1)c\in1+c(dy_1z_1)$. Therefore $(avwy_1)(bz_1)=c(dy_1z_1)$ with $(avwy_1)c\in1+c(dy_1z_1)$, which means $\langle avwy_1,bz_1\rangle\equiv_F\langle c,dy_1z_1\rangle$. Putting $r=vwy_1$, $s=z_1$ and $t=y_1z_1$ we get the desired.
 
 ($\Leftarrow$) Immediate.
\end{proof}

 Then for all $a,b,c,d\in\dot F$ are equivalent:
 \begin{enumerate}[i-]
     \item $\langle[a],[b]\rangle\equiv_{S_F(\omega)}\langle[c],[d]\rangle$;
     \item $\langle ar,bs\rangle\equiv_{F}\langle ct,d\rangle$ for some $r,s,t\in1+\alpha$.
     \item $\langle ar,bs\rangle\equiv_{F}\langle c,dt\rangle$ for some $r,s,t\in1+\alpha$.
     \item $\langle a,br\rangle\equiv_{F}\langle cs,dt\rangle$ for some $r,s,t\in1+\alpha$.
     \item $\langle ar,b\rangle\equiv_{F}\langle cs,dt\rangle$ for some $r,s,t\in1+\alpha$.
 \end{enumerate}

Now we have all the necessary to expand the validity of the Arason-Pfister Hauptsatz for all special groups. We use the same notations in \ref{pfisternotation}. The proof proceed similar to the one in Theorem \ref{haupred}.

\begin{teo}[Arason-Pfister Hauptsatz]\label{haup}
 Let $F$ be a special hyperfield, then  it holds $AP_F(n)$, for all $n \geq 0$. In more details: for each  $n \geq 0$ and For each $\varphi = \langle a_1,\cdots, a_k \rangle$, a  non-empty ($k\geq 1$), regular ($a_i \in \dot{F}$) and anisotropic form, if  $\varphi\in I^n(F)$, then $\dim(\varphi)\ge2^n$  $\varphi\in I^n(F) $, if $\varphi \neq \emptyset$ is anisotropic, then $\dim_{W,F}(\varphi)\ge2^n$.
\end{teo}

\begin{proof}
An equivalent way to state this result is the following: if a form $q$ belongs to $I^nF$ and $\dim q<2^n$ then $q$ must be a hyperbolic form.

 Since $\varphi$ is an anisotropic form such that $\varphi \in I^nF \setminus \{\emptyset\}$ and  $I^nF$ is additively generated by the Pfister forms, then there exists $r \geq 1$ and Pfister's forms of degree $n$, $\varphi_1, \cdots \varphi_r$ such that
$\varphi= \pm(\varphi_1+...+\varphi_r)$.  

Since $\varphi$ is anisotropic, we can suppose without loss of generality that $\varphi= \varphi_1+...+\varphi_r$ and proceed by induction on $r$. 

If $r=1$, then $\varphi=\varphi_1$, with $\dim(\varphi)=\dim(\varphi_1)=2^n$.

Let $r\ge2$. If $\varphi_j$ is isotropic for all $j=1,...,r$ then $\varphi$ is isotropic (hyperbolic, in fact): this fallows from Witt's cancellation law since $\varphi \oplus k \langle 1, -1 \rangle \equiv (r2^{n-1}+m) \langle 1, -1 \rangle $. So we can suppose without loss of generality that $\varphi_1=\langle\langle a_1,...,a_n\rangle\rangle$ is anisotropic.

Suppose $-1\notin D_F(\varphi_1)$. By Corollary \ref{cor33} Let $S_F(\varphi_1):=S_{F(\sqrt{a_1},...,\sqrt{a_n})}$. Then equivalence class of $\varphi$ on $S_F(\varphi_1)$ is
$$[\varphi]=[\varphi_1+...+\varphi_r]=[\varphi_1]+...+[\varphi_r]=2^n\langle1\rangle+[\varphi_2]+...+[\varphi_r].$$
We already know that $\dim_{W,F}(\varphi)\ge\dim_{W,S_F(\varphi_1)}[\varphi]$. Then we have three cases:
\begin{description}
\item [I - ] $[\varphi]$ is hyperbolic. Then $([\varphi_2]+...+[\varphi_r])_{an}\equiv_{S_F(\varphi_1)}2^n\langle-1\rangle$. Then
$$\dim_{W,F}(\varphi)\ge\dim_{W,F}(\varphi_2+...+\varphi_r)
\ge\dim_{W,F}(\varphi_2+...+\varphi_r)_{an}\ge\dim_{W,F}([\varphi_2]+...+[\varphi_r])_{an}=2^n.$$

\item [II - ] $[\varphi]$ is not hyperbolic and $[\varphi_2]+...+[\varphi_r]$ is anisotropic. Then $\varphi_2+...+\varphi_r$ is anisotropic. By induction hypothesis we have $\dim_{W,F}(\varphi_2+...+\varphi_r)\ge2^n$. Then
$$\dim_{W,F}(\varphi)\ge\dim_{W,F}(\varphi_2+...+\varphi_r)\ge2^n.$$

\item [III - ] $[\varphi]$ is not hyperbolic and $[\varphi_2]+...+[\varphi_r]$ is isotropic. 

Since $[\varphi]$ is not hyperbolic, we can assume that $[\varphi_2]$ is anisotropic (otherwise, if $[\varphi_j]$ is isotropic for all $j=2,...,r$ then $[\varphi]$ is an isotropic Pfister form and then, is also hyperbolic). write $F_1:=S_F(\varphi_1)$. In $S_{F_1}([\varphi_2])$ (which is a special hyperfield) look at 
$$\psi_2:=[[\varphi_2]+...+[\varphi_r]]=2^n\langle1\rangle+[\varphi_3]+...+[\varphi_r]\in S_{F_1}([\varphi_2]).$$
For $\psi_2\in I^n(S_{F_1}([\varphi_2]))$ we have
$$\dim_{W,F}(\varphi)\ge\dim_{W,S_F(\varphi_1)}[\varphi]\ge
\dim_{W,S_{F_1}([\varphi_2])}[\psi_2]$$
and the same cases I, II and III for $\psi_2$. Suppose without loss of generality that we are in case III, i.e, that $[\varphi_3]+...+[\varphi_r]$ is isotropic in $S_{F_1}([\varphi_2])$. If $[\varphi_j]$ is isotropic in $S_{F_1}([\varphi_2])$ for all $j\ge3$, then we are in case I. Now suppose $[\varphi_3]$ anisotropic in $S_{F_1}([\varphi_2])$ and write $F_2:=S_{F_1}([\varphi_2])$. In $S_{F_2}([\varphi_3])$ (which is a special hyperfield) look at 
$$\psi_3:=[[\varphi_3]+...+[\varphi_r]]=2^n\langle1\rangle+[\varphi_4]...+[\varphi_r]\in S_{F_2}([\varphi_3]).$$
For $\psi_3\in I^n(S_{F_2}([\varphi_3]))$ we have
$$\dim_{W,F}(\varphi)\ge\dim_{W,S_F(\varphi_1)}[\varphi]\ge
\dim_{W,S_{F_1}([\varphi_2])}[\psi_2]\ge
\dim_{W,S_{F_2}([\varphi_3])}[\psi_3].$$
and the same cases I, II and III for $\psi_3$. Repeating this process more $r-3$ times, we get at $[\varphi_r]$ in $S_{F_{r-1}}([\varphi_{r-1}])$ and
\begin{align*}
  \dim_{W,F}(\varphi)&\ge\dim_{W,S_F(\varphi_1)}[\varphi]\ge
\dim_{W,S_{F_1}([\varphi_2])}[\psi_2] \\
&\ge\dim_{W,S_{F_2}([\varphi_3])}[\psi_3] \ge...\ge
\dim_{W,S_{F_{r-2}}([\varphi_{r-2}])}[\psi_{r-1}].  
\end{align*}
Now, if $[\varphi_r]$ is isotropic in $S_{F_{r-1}}([\varphi_{r-1}])$ then $[\varphi_r]$ is hyperbolic in $S_{F_{r-1}}([\varphi_{r-1}])$, which by case I imply $\dim_{W,S_{F_{r-2}}([\varphi_{r-2}])}[\psi_{r-1}]\ge2^n$. If $[\varphi_r]$ is anisotropic in $S_{F_{r-1}}([\varphi_{r-1}])$  we are in case II and also $\dim_{W,S_{F_{r-2}}([\varphi_{r-2}])}[\psi_{r-1}]\ge2^n$.
\end{description}

Now suppose $-1\in D_F(\varphi)$. Then $S_F(\varphi_1)\cong\{0,1\}$ (see Theorem \ref{pre-krasner}), which imply $[\varphi]$ is hyperbolic, enabling us to use the very an adapted version argument in Case (I) above: the equivalence class of $\varphi$ on $S_F(\varphi_1)$ still is given by
$$[\varphi]=[\varphi_1+...+\varphi_r]=[\varphi_1]+...+[\varphi_r]=2^n\langle1\rangle+[\varphi_2]+...+[\varphi_r].$$
Then we have $[\varphi_2]+...+[\varphi_r]\equiv_{S_F(\varphi_1)}2^n\langle-1\rangle$, implying that
$$\dim_{W,F}(\varphi)\ge\dim_{W,F}(\varphi_2+...+\varphi_r)
\ge\dim_{W,F}([\varphi_2]+...+[\varphi_r])=2^n.$$
\end{proof}
 
$ $

Now, we turn our attention to graded rings associated to abstract quadratic forms theories (special hyperfields, or equivalently, special groups): we will apply the above established Theorem APH  to obtain information on the {\em inductive graded rings} (Definition 3.1 in \cite{dickmann1998quadratic}) of a special group $G$: the graded  Witt ring of $G$, 
$$W_*(G) = (I^n(G)/I^{n+1}(G) \overset{\langle 1, 1\rangle \otimes -}\longrightarrow I^{n+1}(G)/I^{n+2}(G))_{n \in \mathbb{N}},$$
and on  the graded ring of $k$-theory of $G$,  
$$k_*(G) = (k_n(G) \overset{\lambda(-1) \otimes -}\longrightarrow k_{n+1}(G))_{n \in \mathbb{N}}.$$

The uses of k-theoretic (and Boolean) methods in abstract theories of quadratic forms has been proved a very successful method, see for 
instance, these two papers of Dickmann and Miraglia: \cite{dickmann1998quadratic} where they give an affirmative answer to Marshall Signature Conjecture, and \cite{dickmann2003lam}, where they give an affirmative answer to Lam's Conjecture (previously both conjecture have kept open for almost three decades).  These two central papers makes us take a deeper look at the theory of special groups (and hence, hyperbolic/pre-special hyperfields) by itself. This is not mere exercise in abstraction: from Marshall's and Lam's Conjecture many questions arise in the abstract and concrete context of quadratic forms. 

We will freely permute between a special group $G$ and a special hyperfield $F$ since the associations $G \mapsto F_G : = G \dot{\cup} \{0\}$ and $F \mapsto G_F := F \setminus \{0\}$  are  part of an equivalence of categories (\cite{ribeiro2016functorial}, \cite{roberto2021quadratic}). The graded Witt ring of a special group is studied in \cite{dickmann1998quadratic} and \cite{dickmann2000special}; \cite{dickmann2006algebraic} is the reference for the k-theory of special groups; in \cite{roberto2021ktheory} is developed a k-theory for all hyperbolic hyperfields (that includes all pre-special hyperfields).

For the reader's convenience we recall below some relevant Definitions.

\begin{defn}[The Dickmann-Miraglia k-theory \cite{dickmann2006algebraic}]\label{defn:ksg}
 For each special group $G$ (written multiplicatively) we associate a (inductive) graded ring
 $$k_*G=(k_0G,k_1G,...,k_nG,...)$$
 as follows: $k_0G:=\mathbb F_2$ and $k_1G:=G$ written additively. With this purpose, we fix the canonical ``logarithm'' isomorphism
 $\lambda:G\rightarrow k_1G$, $\lambda(ab)=\lambda(a)+\lambda(b)$. Observe that $\lambda(1)$ is the zero of $k_1G$ and $k_1G$ has exponent 2, i.e, $\lambda(a)=-\lambda(a)$ for all $a\in G$. In the sequel, we define $k_*G$ by the quotient of the $\mathbb F_2$-graded algebra
 $$(\mathbb F_2,k_1G,k_1G\otimes_{\mathbb F_2} k_1G,k_1G\otimes_{\mathbb F_2} k_1G\otimes_{\mathbb F_2} k_1G,...)$$
 by the (graded) ideal generated by $\{\lambda(a)\otimes\lambda(ab),\,a\in D_G(1,b)\}$. In other words, for each $n\ge2$, 
$$k_nG:=T^n(k_1G)/Q^n(G),$$
where
$$T^n(k_1G):=k_1G\otimes_{\mathbb F_2} k_1G\otimes_{\mathbb F_2}...\otimes_{\mathbb F_2} k_1G$$ 
and $Q^n(G)$ is the subgroup generated by all expressions of type $\lambda(a_1)\otimes\lambda(a_2)\otimes...\otimes\lambda(a_n)$ such that for 
some $i$ with $1\le i< n$, there exist $b\in G$ such that $a_i\in D_G(1,b)$ and $a_i=a_{i+1}b$, which in symbols, means
\begin{align*}
 Q^n(G)&:=\langle\{\lambda(a_1)\otimes\lambda(a_2)\otimes...\otimes\lambda(a_n):\mbox{ exists }1\le i< n\mbox{ and }\,b\in G \\
 &\mbox{such that }a_i=a_{i+1}b\mbox{ and }a_i\in D_G(1,b)\}\rangle.
\end{align*}
\end{defn}
 
\begin{defn}[\cite{dickmann1998quadratic}, \cite{dickmann2006algebraic}]\label{2.4kt} 
$ $ Let $G$ be a formally real special group.
 \begin{enumerate}[a -]
  \item It holds [MC($G$)]  (i.e., $G$ satisfies "Marshall's conjecture") if for all $n\ge1$ and all forms $\varphi$ over $G$,
  $$\mbox{For all }\sigma\in X_G,\mbox{ if }\sigma(\varphi)\equiv0\,\mbox{mod }2^n\mbox{ then } \varphi\in I^nG.$$
  \item It holds [WMC($G$)] (i.e., $G$ satisfies "Weak Marshall's conjecture")  if for all $n\ge1$,  the multiplication by $\langle 1, 1 \rangle$ is an injection of $I^{n}(G)/I^{n+1}(G)$ into $I^{n+1}(G)/I^{n+2}(G)$.
  \item  It holds [SMC($G$)]  (i.e., $G$  satisfies "Strong Marshall's conjecture")  if for all $n\ge1$,  the multiplication by $\lambda(-1)$ is an injection of $k_n(G)$ into $k_{n+1}(G)$.
 \end{enumerate}
\end{defn}

It follows from  Proposition 4.6.(e) in \cite{dickmann1998quadratic} that  [MC($G$)] $\Rightarrow$ [WMC($G$)]; in Proposition 4.4 in  \cite{dickmann2003lam} is established  [SMC($G$)] $\Rightarrow$ [MC($G$)], for all reduced special group $G$. Now we apply Theorem APH to obtain the following:

\begin{prop} \label{MC}
 Let $G$ be a formally real special group.  Then $G$ satisfy Marshall [MC] (i.e., Marshall's signature conjecture holds in $G$)  iff $G$ satisfy [WMC] (i.e., Weak Marshall's conjecture holds in $G$).
\end{prop}
\begin{proof}
In the theorem 5.3 of \cite{dickmann1998quadratic} is established the equivalence of [MC] and [WMC] for all formally real special groups $G$  such that $2^k = \langle 1, 1 \rangle^k \notin I^{k+1}(G)$,for all $k\ge1$. But, it follows from Theorem APH that {\em all} formally real special groups automatically satisfies that property: otherwise $\langle 1, 1 \rangle^k$  will be hyperbolic and
thus $-1 \in Sat(G) = \bigcup_{k \in \mathbb{N}} D_G(2^k)$, contradicting that $G$ is a formally real special group.
\end{proof}


\begin{defn}[Definition 9.7 in \cite{dickmann2000special}]\label{igr1}
 An \textbf{inductive graded ring} (or \textbf{Igr} for short) is a structure $\mathcal{R}=((R_n)_{n\ge0},(h_n)_{n\ge0},\ast_{nm})$ where
\begin{enumerate}[i -]
    \item $R_0\cong\mathbb F_2$.
    \item $R_n$ is a group of exponent 2 with a distinguished element $\top_n$.
    \item $h_n:R_n\rightarrow R_{n+1}$ is a group homomorphism such that $h_n(\top_n)=\top_{n+1}$.
    \item For all $n\ge0$, $h_n=\ast_{1n}(\top_1,\_)$.
    \item The ring
    $$R=\bigoplus_{n\ge0}R_n$$
    is a commutative graded ring.
    \item For $0\le s\le t$ define
    $$h^t_s=\begin{cases}Id_{R_s}\mbox{ if }s=t\\
    h_{t-1}\circ...\circ h_{s+1}\circ h_s\mbox{ if }s<t.\end{cases}$$
    Then if $p\ge n$ and $q\ge m$, for all $x\in R_n$ and $y\in R_m$,
    $$h^p_n(x)\ast h^q_m(y)=h^{p+q}_{n+m}(x\ast y).$$
\end{enumerate}
A \textbf{morphism} between Igr's $\mathcal{R}$ and $\mathcal{R}'$ is a 
morphism of pointed groups and 
$$f=\bigoplus\limits_{n\ge0}f_n:R\rightarrow R'$$
is a morphism of commutative rings with unity (thus $\alpha_{n+1}  \circ h_n = h'_{n+1} \circ \alpha_n$). The category of inductive graded rings (in first version) and their morphisms will be denoted by $IGR$.
\end{defn}

In \cite{roberto2021ktheory} are considered some full subcategories of $IGR$ and  \cite{roberto2022graded} deals with limits and colimits of $IGR$ and these subcategories. 
A particularly useful sucategories is $IGR_h$, the full subcategory of Igr's $\mathcal{R}$ where for each $a \in R_1$, $\top_1 \ast_{1,1} a = a \ast_{1,1} a \in R_2$. Proposition 4.18 and Definition 4.19 therein  describes a functor $\Gamma : IGR_h \to pSG$ (the category of pre-special groups, that is equivalent to the category of pre-special hyperfields).

 Now, let $R\in IGR_h$. We have a pre-special group $\Gamma(\mathcal{R}) = (G(\mathcal{R}),+,-.\cdot,0,1)$ by the following: firstly, fix an isomorphism $e_R:(R_1,+_1,0_1,\top_1)\rightarrow (G(\mathcal{R}),\cdot,1,-1)$. This isomorphism makes, for example, an element $a\ast_{11}(\top_1+b)\in R_2$, $a,b\in R_1$ take the form $(e^{-1}_R(x))\ast_{11}(e^{-1}_R((-1)\cdot y))\in R_2$, $x,y\in G(\mathcal{R})$. 
 
 Now, let $\Gamma(\mathcal{R}):=G(\mathcal{R})$ and for $a,b, c, d \in R_1$ we have \ $\langle e_R(a) , e_R(b) \rangle \equiv \langle e_R(c) , e_R(d) \rangle$
 
 iff \ $a+b = c+d \in R_1$ \ and \ $a *_{11} b = c *_{11} d \in R_2$

If $\alpha = (\alpha_n)_{n \in \mathbb{N}} : \mathcal{R} \to \mathcal{R}'$ is a IGR-morphism, then $\Gamma(\alpha) : G(\mathcal{R}) \to G(\mathcal{R}')$ is the unique function (that turns out to be a pSG-morphism) such that $\Gamma(\alpha) = e _{R'} \circ \alpha_1 \circ e^{-1}_R$.

For each $G \in pSG$, the Igr's $W_*(G)$ and $k_*(G)$ belongs to the subcategory $IGR_h$ (Lemma 3.2 in \cite{dickmann1998quadratic}, \cite{dickmann2006algebraic} and Lemma 9.12 in \cite{dickmann2000special}) is defined a IGR-morphism $s_G : k_*(G) \to W_*(G)$ such that: $(s_G)_n (\lambda(a_1)\otimes \cdots \otimes \lambda(a_n)) = \langle 1, -a_1\rangle \otimes \cdots \otimes \langle 1, -a_n\rangle$ (Theorem 4.1 in \cite{dickmann2003lam}). In general  $(s_G)_n : k_n(G) \to I^n(G)/I^{n+1}(G)$ is a surjective homomorphism of pointed 2-groups and if $n =0,1,2$, then $(s_G)_n$ is an isomorphism of pointed 2-groups.

 Theorem 4.20 in \cite{roberto2021ktheory} establishes that the functor $k_*  : pSG \to IGR_q$ is left adjoint to the functor $\Gamma$ and the natural transformation that is the unity of this adjunction, $\kappa = (\kappa_G)_{G \in pSG}$, is such that for each $G \in pSG$, $\kappa_G :  G \to \Gamma(k_*(G))$, $g \mapsto \lambda(g)$ is a pSG-morphism that is an isomorphism of the underlying pointed 2-groups.
 
 In \cite{marshall1980abstract} M. Marshall proved that $\omega_G: G \to I(G)/I^2(G) $ $g \mapsto \langle 1, -g \rangle + I^2(G)$ is an isomorphism of pointed groups such that for each $a,b,c, d \in G$:
 $$\langle a , b \rangle \equiv_G \langle c , d \rangle\Rightarrow\langle 1 , -a \rangle \otimes\langle 1 , -b \rangle + I^3(G) = \langle 1 , -c \rangle \otimes\langle 1 , -d \rangle + I^3(G).$$ 

Thus, for each special group $G$, we have the following commutative diagram of pre-special groups and pSG-morphisms

$$(G \overset{\omega_G}\to \Gamma(W_*(G))) = (G \overset{\kappa_G}\to \Gamma(k_*(G)) \overset{\Gamma(s_G)}\to \Gamma(W_*(G)))$$

that is, moreover, natural in $G$. Now we are in position to state the:

\begin{prop} \label{k-stable} Let $G$ be a special group. Then 
 $ $
 \begin{enumerate}[i -]
  \item  $\Gamma(s_G):\Gamma(k_*(G))\rightarrow\Gamma(W_*(G)))$ is a pSG-isomorphism.
   \item  $\omega_G:G\rightarrow\Gamma( W_*(G))$ is a pSG-isomorphism.
  \item  $\kappa_G:G\rightarrow\Gamma(k_*(G))$ is a pSG-isomorphism.
 \end{enumerate}
  In particular, $\Gamma(k_*(G))$ and $\Gamma(W_*(G))$ are special groups. 
\end{prop}

\begin{proof}
This is essentially contained in the {\em proof of} Lemma 3.5 in \cite{dickmann2006algebraic}, but for convince the reader we provide some details:

First observe that, from axiom [SG4] is enough to show that for $x, y \in G$ are equivalent:\\
(1) $x \in D_G(1, y)$;\\ 
(2) $\lambda(x) \in D_{\Gamma(k_*(G))}(\lambda(1), \lambda(y)) $;\\
(3) $\langle 1, -x\rangle + I^2(G) \in D_{\Gamma(W_*(G))}(\langle 1, -1\rangle +I^2(G), \langle 1, -y\rangle + I^2(G))  $

 $(1) \Rightarrow (2)$: is clear from the definition of $k_*(G)$, just note that condition (2) is equivalent to $\lambda(x)\lambda(xy) = 0 \in  k_2(G)$
 
 $(2) \Rightarrow (3)$: this follows directly from $s_G : k_*(G) \to W_*(G)$ be a $IGR_h$-morphism 

 $(3) \Rightarrow (1)$: note that condition (3) is equivalent to 
$\langle 1, -x\rangle \otimes \langle 1, -xy\rangle + I^3(G)) = 0+ I^3(G))$. This means that $\langle 1, -x\rangle \otimes \langle 1, -xy\rangle \in I^3(G)$. Since $\dim(\langle 1, -x\rangle \otimes \langle 1, -xy\rangle) = 4 < 8 = 2^3$, then by Theorem APH, $\langle 1, -x\rangle \otimes \langle 1, -xy\rangle$ is an isotropic Pfister form. Thus it is an hyperbolic form, and then, by Proposition 2.2.(k) in \cite{dickmann2000special}, $x \in D_G(1,y)$.

\end{proof}

The notion of k-stable hyperbolic hyperfield $F$, i.e.  those such that canonical morphism $\kappa_{F} : F \to \Gamma(k_*(F)) \dot{\cup} \{0\}$ is an isomorphism of hyperfields, it is fundamental in \cite{roberto2022galois}. Thus the previous result establishes the:  

\begin{cor} \label{k-stable-co} Every special hyperfield $F$ is k-stable.
\end{cor}

The following result shows that the k-theory construction provides a very good encoding of the -- neither complete neither cocomplete -- category of special groups into the complete and cocomplete category of inductive graded rings.

\begin{prop} \label{epiK} The functor $k_* : SG \to IGR$ is full and faithful.
\end{prop}
\begin{proof}
We have to show that for each special groups $G_0$ and $G_1$ and any $\beta : k_*G_0\rightarrow k_*G_1$ be an {\em inductive graded} ring morphism between the associated  inductive graded rings of $k$-theory,  then there exist unique  SG-morphism $f : G_0\rightarrow G_1$ such that $\beta = k_*(f)$.

Proposition 3.6 in \cite{dickmann2006algebraic} establishes (from Lemma 3.5) that:
 for each special groups $G_0$ and $G_1$ and any $\beta : k_*(G_0)\rightarrow k_*(G_1)$ be a  {\em graded} ring morphism between the induced $k$-theory graded rings, such that $\beta_0 = id_{\mathbb{F}_2}$ and $G_1$ is a AP(3) special group,  then there exist a qSG-morphism $f : G\rightarrow H$ such that $\beta = k_*(f)$. Moreover, this $f$ is uniquely determined since $\beta_1 \circ \lambda_{G_0} = \lambda_{G_1} \circ f$ and $\lambda_{G_i} : G_i \to k_1(G_i)$ is an isomorphism of groups of exponent 2 that  preserves the distinguished elements ($-1_{G_i} \mapsto \lambda_{G_i}(-1_{G_i})$).

Thus the result follows since any special group $G_1$ satisfies is AP(3) (by Theorem APH), and since $\beta$ is a IGR-morphism then automatically  $\beta_0 = id_{\mathbb{F}_2}$ and $\beta_1 = k_1(f)$ implies that $f(-1_{G_0}) = -1_{G_1}$, thus $f$ the qSG-morphism $f$ is a SG-morphism.
\end{proof}

\begin{rem}
The previous  result can be derived, alternatively from Corollary \ref{k-stable-co} and Theorem 4.20 in \cite{roberto2021ktheory}: from an well known result on adjoint functors, a left adjoint is a full and faithful functor iff  the unity of the adjunction is an isomorphism. Thus $k_* : SG \to IGR_h$ is a full and faithful functor and, since $IGR_h \subseteq IGR$ is a full subcategory, then $k_* : SG \to IGR$ is full and faithful.
\end{rem}

\bibliographystyle{plain}
\bibliography{one_for_all.bib}

\begin{thebibliography}{10}

\bibitem{al2019some}
Madeline Al~Tahan, Sarka Hoskova-Mayerova, and Bijan Davvaz.
\newblock Some results on (generalized) fuzzy multi-hv-ideals of hv-rings.
\newblock {\em Symmetry}, 11(11):1376, 2019.

\bibitem{ameri2020advanced}
R~Ameri, M~Eyvazi, and S~Hoskova-Mayerova.
\newblock Advanced results in enumeration of hyperfields.
\newblock {\em Aims Mathematics}, 5(6):6552--6579, 2020.

\bibitem{ameri2017multiplicative}
R~Ameri, A~Kordi, and S~Hoskova-Mayerova.
\newblock Multiplicative hyperring of fractions and coprime hyperideals.
\newblock {\em Analele Universitatii" Ovidius" Constanta-Seria Matematica},
  25(1):5--23, 2017.

\bibitem{ameri2019superring}
Reza Ameri, Mansour Eyvazi, and Sarka Hoskova-Mayerova.
\newblock Superring of polynomials over a hyperring.
\newblock {\em Mathematics}, 7(10):902, 2019.

\bibitem{arason1971hauptsatz}
J.~Arason and A.~Pfister.
\newblock Beweis des krullschen durschnittsatzes fur den wittring.
\newblock {\em Inventiones Mathematicae}, (12):173–--176, 1971.

\bibitem{baker2021structure}
Matthew Baker and Tong Jin.
\newblock On the structure of hyperfields obtained as quotients of fields.
\newblock {\em Proceedings of the American Mathematical Society},
  149(1):63--70, 2021.

\bibitem{baker2021descartes}
Matthew Baker and Oliver Lorscheid.
\newblock Descartes' rule of signs, newton polygons, and polynomials over
  hyperfields.
\newblock {\em Journal of Algebra}, 569:416--441, 2021.

\bibitem{bowler2021classification}
Nathan Bowler and Ting Su.
\newblock Classification of doubly distributive skew hyperfields and stringent
  hypergroups.
\newblock {\em Journal of Algebra}, 574:669--698, 2021.

\bibitem{davvaz2016codes}
B~Davvaz and T~Musavi.
\newblock Codes over hyperrings.
\newblock {\em Matematicki Vesnik}, 68(1):26--38, 2016.

\bibitem{roberto2022galois}
Kaique~Matias de~Andrade~Roberto, Ana~Luiza da~Conceição~Tenório,
  Hugo~Rafael de~Oliveira~Ribeiro, and Hugo~Luiz Mariano.
\newblock {G}alois {T}heory of {H}yperfields {I}.
\newblock {\em in preparation}, 2022.

\bibitem{roberto2022ACmultifields2}
Kaique~Matias de~Andrade~Roberto, Hugo~Rafael de~Oliveira~Ribeiro, and
  Hugo~Luiz Mariano.
\newblock On algebraic extensions and algebraic closures of superfields.
\newblock {\em Preliminary version in https://arxiv.org/pdf/2208.08537}, 2022.

\bibitem{roberto2021quadratic}
Kaique~Matias de~Andrade~Roberto, Hugo~Rafael de~Oliveira~Ribeiro, and
  Hugo~Luiz Mariano.
\newblock Quadratic structures associated to (multi) rings.
\newblock {\em Categories and General Algebraic Structures}, 16(1):105--141,
  2022.

\bibitem{roberto2022graded}
Kaique~Matias de~Andrade~Roberto and Hugo~Luiz Mariano.
\newblock {I}nductive {G}raded {R}ings associated to {Q}uadratic {M}ultirings.
\newblock {\em in preparation}, 2022.

\bibitem{roberto2021ktheory}
Kaique~Matias de~Andrade~Roberto and Hugo~Luiz Mariano.
\newblock K-theories and free inductive graded rings in abstract quadratic
  forms theories.
\newblock {\em Categories and General Algebraic Structures}, 17(1):1--46, 2022.

\bibitem{roberto2021superrings}
Kaique~Matias de~Andrade~Roberto and Hugo~Luiz Mariano.
\newblock On superrings of polynomials and algebraically closed multifields.
\newblock {\em Journal of Applied Logics}, 9(1):419--444, 2022.

\bibitem{ribeiro2016functorial}
Hugo~Rafael de~Oliveira~Ribeiro, Kaique~Matias de~Andrade~Roberto, and
  Hugo~Luiz Mariano.
\newblock Functorial relationship between multirings and the various abstract
  theories of quadratic forms.
\newblock {\em S{\~a}o Paulo Journal of Mathematical Sciences}, 16:5--42, 2022.

\bibitem{dickmann1998quadratic}
M~Dickmann and F~Miraglia.
\newblock On quadratic forms whose total signature is zero mod 2nsolution to a
  problem of m. marshall.
\newblock {\em Inventiones mathematicae}, 133(2):243--278, 1998.

\bibitem{dickmann2003lam}
M~Dickmann and F~Miraglia.
\newblock Lam's conjecture.
\newblock In {\em Algebra Colloquium}, volume~10, pages 149--176.
  Springer-Verlag 175 Fifth Ave, New York, NY 10010 USA, 2003.

\bibitem{dickmann2006algebraic}
M~Dickmann and F~Miraglia.
\newblock Algebraic k-theory of special groups.
\newblock {\em Journal of Pure and Applied Algebra}, 204(1):195--234, 2006.

\bibitem{dickmann2000special}
Maximo Dickmann and Francisco Miraglia.
\newblock {\em Special groups: Boolean-theoretic methods in the theory of
  quadratic forms}.
\newblock Number 689. American Mathematical Soc., 2000.

\bibitem{eppolito2020hopf}
Chris Eppolito, Jaiung Jun, and Matt Szczesny.
\newblock Hopf algebras for matroids over hyperfields.
\newblock {\em Journal of Algebra}, 556:806--835, 2020.

\bibitem{worytkiewiczwitt2020witt}
Pawel Gladki and Krzysztof Worytkiewicz.
\newblock Witt rings of quadratically presentable fields.
\newblock {\em Categories and General Algebraic Structures}, 12(1):1--23, 2020.

\bibitem{golzio2018brief}
Ana~Claudia Golzio.
\newblock A brief historical survey on hyperstructures in algebra and logic.
\newblock {\em South American Journal of Logic}, 2018.

\bibitem{jun2015algebraic}
Jaiung Jun.
\newblock Algebraic geometry over hyperrings.
\newblock {\em Advances in Mathematics}, 323:142--192, 2018.

\bibitem{jun2018valuations}
Jaiung Jun.
\newblock Valuations of semirings.
\newblock {\em Journal of Pure and Applied Algebra}, 222(8):2063--2088, 2018.

\bibitem{jun2021geometry}
Jaiung Jun.
\newblock Geometry of hyperfields.
\newblock {\em Journal of Algebra}, 569:220--257, 2021.

\bibitem{krasner1956approximation}
Marc Krasner.
\newblock Approximation des corps valu{\'e}s complets de caract{\'e}ristique p=
  0 par ceux de caract{\'e}ristique 0.
\newblock In {\em Colloque d’alg{\`e}bre sup{\'e}rieure, tenu {\`a} Bruxelles
  du}, volume~19, pages 129--206, 1956.

\bibitem{lam2005introduction}
Tsit-Yuen Lam.
\newblock {\em Introduction to quadratic forms over fields}, volume~67.
\newblock American Mathematical Soc., 2005.

\bibitem{marshall1980abstract}
Murray Marshall.
\newblock {\em Abstract Witt rings}.
\newblock Kingston, Ont.: Queen's University, 1980.

\bibitem{marshall2006real}
Murray Marshall.
\newblock Real reduced multirings and multifields.
\newblock {\em Journal of Pure and Applied Algebra}, 205(2):452--468, 2006.

\bibitem{marty1934generalization}
Frederic Marty.
\newblock Sur une generalization de la notion de groupe.
\newblock In {\em 8th congress Math. Scandinaves}, pages 45--49, 1934.

\bibitem{massouros1985theory}
Ch~G Massouros.
\newblock Theory of hyperrings and hyperfields.
\newblock {\em Algebra and Logic}, 24(6):477--485, 1985.

\bibitem{massouros2009join}
Christos~G Massouros and Gerasimos~G Massouros.
\newblock On join hyperrings.
\newblock In {\em Proceedings of the 10th International Congress on Algebraic
  Hyperstructures and Applications, Brno, Czech Republic}, pages 203--215,
  2009.

\bibitem{massouros1999homomorphic}
Geronimos~G Massouros and Christos~G Massouros.
\newblock Homomorphic relation on hyperingoinds and join hyperrings.
\newblock {\em Ratio Mathematica}, 13(1):61--70, 1999.

\bibitem{milnor1970algebraick}
John Milnor.
\newblock Algebraic k-theory and quadratic forms.
\newblock {\em Inventiones mathematicae}, 9(4):318--344, 1970.

\bibitem{nakassis1988recent}
Anastase Nakassis.
\newblock Recent results in hyperring and hyperfield theory.
\newblock {\em International Journal of Mathematics and Mathematical Sciences},
  11, 1988.

\bibitem{pelea2006multialgebras}
Cosmin Pelea and Ioan Purdea.
\newblock Multialgebras, universal algebras and identities.
\newblock {\em Journal of the Australian Mathematical Society}, 81(1):121--140,
  2006.

\bibitem{viro2010hyperfields}
Oleg Viro.
\newblock Hyperfields for tropical geometry i. hyperfields and dequantization.
\newblock {\em arXiv preprint arXiv:1006.3034}, 2010.

\end{thebibliography}

\end{document}